\documentclass[12pt]{amsart}
\usepackage{amsmath}
\usepackage{amsfonts, amssymb, euscript, mathrsfs}
\usepackage{amsthm, upref}
\usepackage{graphicx}
\usepackage[usenames, dvipsnames]{color}

\usepackage{tikz}
\usepackage{enumerate}
\usepackage[tmargin=0.8in,bmargin=0.8in,lmargin=0.6in,rmargin=0.6in]{geometry}
\usepackage{aliascnt}
\usepackage{hyperref}
\usepackage{verbatim}

\allowdisplaybreaks[4]

\newcommand{\MZ}{\mathbb{Z}}

\newcommand{\BR}{\mathbb{R}}

\newcommand{\SL}{\sum\limits}

\newcommand{\al}{\alpha}
\newcommand{\be}{\beta}

\newcommand{\de}{\delta}
\newcommand{\De}{\Delta}

\newcommand{\CF}{\mathcal F}

\newcommand{\MP}{\mathbf P}

\newcommand{\CS}{\mathcal S}

\newcommand{\CT}{\mathcal T}

\newcommand{\CP}{\mathcal P}

\newcommand{\Oa}{\Omega}

\newcommand{\si}{\sigma}

\newcommand{\pa}{\partial}
\renewcommand{\phi}{\varphi}

\newcommand{\eps}{\varepsilon}

\newcommand{\ol}{\overline}
\newcommand{\oR}{\overline{R}}
\newcommand{\om}{\overline{R}^{-1}}
\newcommand{\CM}{\mathcal M}

\newcommand{\norm}[1]{\lVert#1\rVert}
\renewcommand{\comment}[1]{}

\newcommand{\mP}{\mathbf{p}}

\newcommand{\md}{\mathrm{d}}

\newcommand{\mI}{\mathbf{1}}

\DeclareMathOperator{\SRBM}{SRBM}

\DeclareMathOperator{\diag}{diag}

\DeclareMathOperator{\tr}{tr}

\begin{document}

\theoremstyle{plain}
\newtheorem{thm}{Theorem}[section]
\newtheorem*{thmnonumber}{Theorem}
\newtheorem{lemma}[thm]{Lemma}
\newtheorem{prop}[thm]{Proposition}
\newtheorem{cor}[thm]{Corollary}
\newtheorem{open}[thm]{Open Problem}

\theoremstyle{definition}
\newtheorem{defn}{Definition}
\newtheorem{asmp}{Assumption}
\newtheorem{notn}{Notation}
\newtheorem{prb}{Problem}

\theoremstyle{remark}
\newtheorem{rmk}{Remark}
\newtheorem{exm}{Example}
\newtheorem{clm}{Claim}

\author{Cameron Bruggeman and Andrey Sarantsev}

\title[Multiple Collisions in Systems of Competing Brownian Particles]{Multiple Collisions  in Systems\\ of Competing Brownian Particles} 

\address{Department of Mathematics, Columbia University}

\email{bruggeman@math.columbia.edu}

\address{Department of Statistics and Applied Probability, University of California, Santa Barbara}

\email{sarantsev@pstat.ucsb.edu}

\date{May 23, 2016. Version 32}

\keywords{Reflected Brownian motion, competing Brownian particles, asymmetric collisions, named particles, ranked particles, triple collisions, multiple collisions, Skorohod problem, positive orthant, squared Bessel process, stochastic comparison}

\subjclass[2010]{Primary 60K35, secondary 60J60, 60J65, 91B26}

\begin{abstract}
Consider a finite system of competing Brownian particles on the real line. Each particle moves as a Brownian motion, with drift and diffusion coefficients depending only on its current rank relative to the other particles. We find a sufficient condition for a.s. absence of a total collision (when all particles collide) and of other types of collisions, say of the three lowest-ranked particles. This continues the work of Ichiba, Karatzas, Shkolnikov (2013) and Sarantsev (2015). 
\end{abstract}

\maketitle

\section{Introduction} 

\subsection{A brief preview of results}

We start by  describing the concept of {\it competing Brownian particles} informally. A formal definition is postponed until the next section. 

Consider $N$ Brownian particles on the real line. Suppose the particle which is currently the $k$th leftmost one (we say: {\it has rank $k$}), moves as a Brownian motion with drift coefficient $g_k$ and diffusion coefficient $\si_k^2$. In other words, the behavior of a particle depends on its current rank relative to other particles. This is called a {\it system of competing Brownian particles}. 

A caveat: if two or more particles occupy the same position at the same time, how do we {\it resolve ties}, that is, assign ranks to these particles? We can use the following ``lexicographic" rule: particles $X_i$ with smaller indices $i$ get smaller ranks.

Let $X_k = (X_k(t), t \ge 0),\ k = 1, \ldots, N$, be these particles. Let $W_1, \ldots, W_N$ be i.i.d. Brownian motions. Then the particles $X_1, \ldots, X_N$ are governed by the following SDE:
\begin{equation}
\label{eq:SDE-CBP}
\md X_i(t) = \SL_{k=1}^N1\left(X_i\ \mbox{has rank}\ k\ \mbox{at time}\ t\right)\left(g_k\md t + \si_k\md W_i(t)\right).
\end{equation}
Let $Y_k(t)$ be the one of these $N$ particles which has rank $k$ at time $t$. The processes $X_i,\ i = 1, \ldots, N$, are called {\it named particles}, and $Y_k,\ k = 1, \ldots, N$, are called {\it ranked particles}. If $X_i(t) = Y_k(t)$, we say that the corresponding particle at time $t$ has {\it name} $i$ and {\it rank} $k$. By definition, the ranked particles satisfy
\begin{equation}
\label{154}
Y_1(t) \le Y_2(t) \le \ldots \le Y_N(t),\ \ t \ge 0.
\end{equation}
Weak existence and uniqueness in law for these systems were proved in \cite{Bass1987}.
Some motivation for studying these systems is provided later in the Introduction.

This article is devoted to {\it collisions of competing Brownian particles}. 
Let us exhibit some results proved in this paper; they are corollaries of general theorems from Section 2. Suppose, for the sake of simplicity, that we have $N = 4$ competing Brownian particles. We shall present some results, and explain how they are related to the paper \cite{MyOwn3}. 

\begin{thm}
\label{example1}
If the following conditions  
$$
\begin{cases}
9\si_1^2 \le 7\si_2^2 + 7\si_3^2 + 7\si_4^2;\\
3\si_1^2  \le 5\si_2^2 + \si_3^2 + \si_4^2;\\
3\si_1^2 + 3\si_4^2  \le 5\si_2^2 + 5\si_3^2;\\
3\si_4^2 \le \si_1^2 + \si_2^2 + 5\si_3^2;\\
9\si_4^2 \le 7\si_1^2 + 7\si_2^2 + 7\si_3^2,
\end{cases}
$$
hold, then a.s. there does not exist $t > 0$ such that 
\begin{equation}
\label{type1}
Y_1(t) = Y_2(t) = Y_3(t) = Y_4(t).
\end{equation}
Moreover, a.s. there does not exist $t > 0$ such that
\begin{equation}
\label{type2}
Y_1(t) = Y_2(t)\ \ \mbox{and}\ \ Y_3(t) = Y_4(t).
\end{equation}
\end{thm}

\begin{thm}
\label{example3}
If the five inequalities from Theorem~\ref{example1} together with 
$$
\si_2^2 \ge \frac12\left(\si_1^2 + \si_3^2\right)
$$
hold, then a.s. there does not exist $t > 0$ such that 
\begin{equation}
\label{type3}
Y_1(t) = Y_2(t) = Y_3(t).
\end{equation}
\end{thm}

Similar statements (but with other inequalities involving $\si_k^2,\ k = 1, \ldots, N$) can be stated for any $N = 5, 6, \ldots$, and for any type of collision between $Y_1, \ldots, Y_N$. We also have the following statement for $N = 4$; however, in this case we did not find any specific generalizations for this result in cases $N \ge 5$. 

\begin{thm}
\label{cams}
With $N = 4$, if the following condition holds:
\begin{equation}
\label{Bruggeman}
\si_1^2 + \si_4^2 \le \si_2^2 +\si_3^2,
\end{equation}
then a.s. there are no $t > 0$ such that~\eqref{type1} and~\eqref{type2} hold. 
\end{thm}



\subsection{Relation to previous results from the paper \cite{MyOwn3}} The results of this paper complement the main result of the companion paper \cite{MyOwn3}. Let us discuss the relation between these two papers. 

\begin{defn}
A {\it triple collision at time $t$} occurs if there exists a rank $k = 2, \ldots, N-1$ such that  $Y_{k-1}(t) = Y_{k}(t) = Y_{k+1}(t)$. A {\it simultaneous collision} at time $t$ occurs if there are ranks $k \ne l$ such that such that $Y_{k}(t) = Y_{k+1}(t),\ Y_{l}(t) = Y_{l+1}(t)$. 
\label{triplesimdef}
\end{defn}

Note that a triple collision is a particular case of a simultaneous collision. One motivation for  studying triple collisions is that a strong solution to SDE~\eqref{classicSDE} is known to exist and be unique up to the first moment of a triple collision: this was proved in \cite{IKS2013}. The question of whether a classical system of competing Brownian particles a.s. avoids  triple collisions was studied in \cite{IK2010, IKS2013}, with significant partial results obtained. In our companion paper \cite{MyOwn3}, the following necessary and sufficient condition was found. 

\begin{prop}
\label{elegant}
A system of $N$ competing Brownian particles has a.s. no triple collisions and no simultaneous collisions at any time $t > 0$, if and only if the sequence $(\si_1^2, \ldots, \si_N^2)$ is concave, that is, 
\begin{equation}
\label{concave}
\frac12\left(\si_{k-1}^2 + \si_{k+1}^2\right) \le \si_k^2,\ \ k = 2, \ldots, N-1.
\end{equation}
If the condition~\eqref{concave} is violated for some $k = 2, \ldots, N-1$, then with positive probability there exists $t > 0$ such that $Y_{k-1}(t) = Y_k(t) = Y_{k+1}(t)$. 
\end{prop}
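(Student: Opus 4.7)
The plan is to reduce the problem to the gap process $Z_k(t) := Y_{k+1}(t) - Y_k(t)$, $k = 1, \ldots, N-1$, which is a semimartingale reflected Brownian motion in $\BR_+^{N-1}$ with tridiagonal reflection matrix determined by the $\si_k^2$. A triple collision at rank $k$ corresponds to $Z$ touching the codimension-two face $F_{k-1,k} := \{z_{k-1}=z_k=0\}$, while a simultaneous collision at non-adjacent ranks corresponds to hitting $F_{k,l}:=\{z_k=z_l=0\}$ with $l \ge k+2$. The task becomes showing that $Z$ a.s.\ avoids every such face at all $t > 0$ iff \eqref{concave} holds.

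For the adjacent case, I would study the scalar process $S_k(t) := Z_{k-1}(t) + Z_k(t)$. Applying the It\^o--Tanaka formula and carefully tracking the boundary local times on $\{z_{k-1}=0\}$ and $\{z_k=0\}$, one shows that near the corner $F_{k-1,k}$, and after an appropriate time change, $S_k$ is asymptotically a squared Bessel process of dimension $\de_k$, an explicit function of $(\si_{k-1}^2, \si_k^2, \si_{k+1}^2)$ calibrated so that $\de_k \ge 2$ precisely when~\eqref{concave} holds at index $k$. Since squared Bessel processes of dimension $\ge 2$ a.s.\ never return to $0$, whereas those of dimension $< 2$ do so with positive probability from any starting point, this simultaneously yields both the sufficiency and the necessity of~\eqref{concave} for avoidance of triple collisions at rank $k$.

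For non-adjacent faces $F_{k,l}$ with $l \ge k+2$, a separate but analogous analysis is needed: the tridiagonal structure of the reflection matrix decouples $Z_k$ and $Z_l$ along the boundary, so a conditional Bessel-type comparison, combined with a product-measure style argument on the respective visit sets of $0$ (each of Hausdorff dimension $1/2$), shows that simultaneous vanishing is a null event. The necessity direction for simultaneous collisions is automatic from the triple-collision case: any violation of~\eqref{concave} already produces a triple collision, which by Definition~\ref{triplesimdef} is a simultaneous collision.

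The main obstacle, and the technical heart of the argument, is the precise identification of the Bessel dimension $\de_k$ governing $S_k$ at the corner. This requires quantitative control of the oscillations of the two boundary local times as $S_k \to 0$ and verification that the off-corner dynamics contribute only lower-order corrections to both the drift and the martingale part. Once this is in hand, stochastic comparison with genuine squared Bessel processes of slightly larger and smaller dimensions upgrades the local asymptotic description to the global a.s.\ statement.
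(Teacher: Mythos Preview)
This proposition is not proved in the present paper; it is quoted from the companion article \cite{MyOwn3} (see the sentence preceding the statement). The machinery of Section~3 here---Theorems~\ref{cornerthm} and~\ref{corner2edge}---is of the same type as what \cite{MyOwn3} uses, so I will assess your proposal against that framework.

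Your overall plan (reduce to the gap SRBM, then Bessel comparison at each two-dimensional corner) is correct, but the specific test function $S_k = Z_{k-1}+Z_k$ cannot do the job you assign it. Since $S_k = Y_{k+1}-Y_{k-1}$, its quadratic variation is $\md\langle S_k\rangle_t = (\si_{k-1}^2+\si_{k+1}^2)\,\md t$, a \emph{constant}; a squared Bessel process has diffusion coefficient $2\sqrt{V}$, and no time change converts one into the other while preserving whether zero is hit. Worse, $\md S_k$ carries local-time terms $\tfrac12\md L_{k-1}+\tfrac12\md L_k-\tfrac12\md L_{k-2}-\tfrac12\md L_{k+1}$: the positive ones make $S_k$ look like a one-dimensional reflected Brownian motion, which hits zero a.s., so the argument would wrongly predict corner-hitting even under concavity. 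The remedy is to use a \emph{quadratic} form $F(z)=z'Qz$; then $\md F(Z)$ has constant drift $\tr(QA)\,\md t$ and diffusion comparable to $\sqrt{F(Z)}$, which after a time change yields a genuine squared Bessel comparison (this is exactly the proof of Theorem~\ref{cornerthm}). The additional requirement that the local-time contributions in $\md F(Z)$ vanish forces $Q=\ol{R}^{-1}$, and for the $2\times2$ block at indices $\{k-1,k\}$ the resulting Bessel index is $\ge 2$ precisely when~\eqref{concave} holds at $k$; this is what \cite{MyOwn3} carries out, and it gives both directions at once.

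For the non-adjacent faces your decoupling intuition is right---$[R]_{\{k,l\}}=I_2$ and $[A]_{\{k,l\}}$ is diagonal when $|k-l|\ge2$---but $Z_k$ and $Z_l$ are not independent in the full system, so a Hausdorff-dimension product argument on their separate zero sets is not justified as written. The correct route is the localization/strong-Markov reduction of Theorem~\ref{corner2edge}: one shows the auxiliary $\SRBM^2(I_2,0,[A]_{\{k,l\}})$ (genuinely two independent reflected Brownian motions) avoids its corner, and then transfers this to the full gap process.
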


An interesting corollary: {\it If there are a.s. no triple collisions at any time $t > 0$, then there are a.s. no simultaneous collisions at any time $t > 0$. }

We call the condition~\eqref{concave} {\it global concavity}, as opposed to {\it local concavity at rank $j$}, with just one inequality:
$$
\frac12\left(\si_{j-1}^2 + \si_{j+1}^2\right) \le \si_j^2. 
$$
Thus,  if there is no local concavity at $k$, then with positive probability there is a triple collision between $Y_{k-1}$, $Y_k$ and $Y_{k+1}$. 
However, we do not know whether the converse is true: if there is local concavity at $k$, then there are a.s. no triple collisions between $Y_{k-1}$, $Y_k$, $Y_{k+1}$. 

Proposition~\ref{elegant} is a condition to avoid {\it all} possible triple collisions. If we are interested in avoiding only an individual triple collision, such as
$$
Y_1(t) = Y_2(t) = Y_3(t),
$$
we can get another sufficient condition for this: see Theorem~\ref{example3} above. This condition is not stronger than~\eqref{concave}: we can find diffusion parameters, say 
$$
\si_1^2 = \si_2^2 = \si_4^2 = 1,\ \si_3^2 = 0.9,
$$
which satisfy the conditions in Theorem~\ref{example3}, but do not satisfy~\eqref{concave}. In this case, there are no triple collisions of the type
$$
Y_1(t) = Y_2(t) = Y_3(t),
$$
and no simultaneous collisions of the type
$$
Y_1(t) = Y_2(t),\ \ Y_3(t) = Y_4(t),
$$
but with positive probability there are triple collisions of the type
$$
Y_2(t) = Y_3(t) = Y_4(t).
$$
We can take 
$$
\si_1^2 = \si_4^2 = 1,\ \si_2^2 = \si_3^2 = 0.9.
$$
Then   local concavity fails at ranks $2$ and   $3$. Therefore, with positive probability there exists a triple collision between ranked particles $Y_1$, $Y_2$, and $Y_3$, and with positive probability there exists a triple collision between ranked particles $Y_2$, $Y_3$, and $Y_4$. However, the five inequalities~\eqref{CP4I} are satisfied. Therefore, there are no simultaneous collisions of the type 
$$
Y_1(t) = Y_2(t),\ Y_3(t) = Y_4(t).
$$
It was noted in \cite[Corollary 1.3]{MyOwn3} that if there are a.s. no triple collisions, then there are a.s. no simultaneous collisions. As we see in this example, it is possible to find  diffusion coefficients so that the system avoids simultaneous collisions of the type~\eqref{type2}, but exhibits triple collisions with positive probability. 

Also, the collision as in~\eqref{type1} is stronger than a triple or a simultaneous collision.

\subsection{Outline of the proofs}

The main results of this paper are Theorems~\ref{totalcor} and~\ref{mainthm}. Theorems~\ref{example1} and~\ref{example3}, along with other examples in Section 2, are corollaries of these two results. Theorems~\ref{totalcor} and~\ref{mainthm} are proved  in Sections 3 and 4. Let us give a brief outline of the proofs. 

Consider the gaps between the consecutive ranked particles:
$$
Z_1(t) = Y_2(t) - Y_1(t), \ldots, Z_{N-1}(t) = Y_N(t) - Y_{N-1}(t), \quad 0 \le t <\infty.
$$
These  form an $(N-1)$-dimensional process in $\BR^{N-1}_+$, which is called the {\it gap process} and is denoted by $Z = (Z(t), t \ge 0)$. It turns out that $Z$ is a particular case of a well-known process, which is called a {\it semimartingale reflected Brownian motion (SRBM)} in a positive multidimensional orthant. We discuss this relationship in subsection 4.2.

Let us informally describe the concept of an SRBM; a formal definition is given in subsection 3.1. Fix the dimension $d \ge 1$, and let $\BR_+ := [0, \infty)$ be the positive half-axis. Let $S = \BR^d_+$ be the $d$-dimensional positive orthant. Fix a {\it drift vector} $\mu \in \BR^d$, a $d\times d$ {\it reflection matrix} $R$ and another $d\times d$ {\it covariance matrix} $A$. An {\it SRBM in the orthant $S$} with these parameters $R, \mu, A$ is a Markov process which:

(i) behaves as a $d$-dimensional Brownian motion with drift vector $\mu$ and covariance matrix $A$ in the interior of the orthant $S$;

(ii) at each face $S_i := \{x \in S\mid x_i = 0\}$, $i = 1, \ldots, d$, this process is reflected in the direction of $r_i$, which is the $i$th column of $R$. 

If $r_i = e_i$, where $e_i$ is the $i$th vector from the standard basis in $\BR^d$, then this reflection is called {\it normal}; otherwise, it is called {\it oblique}. This process is denoted by $\SRBM^d(R, \mu, A)$. The survey \cite{Wil1995} provides a good overview of this process. More information and citations concerning an SRBM are provided in subsection 3.1. 

The parameters $R$, $\mu$ and $A$ of the SRBM which is the gap process depend on $g_n, \si_n,\ n = 1, \ldots, N$, see subsection 4.2, equations ~\eqref{R12}, ~\eqref{A} and~\eqref{mu}. 

\medskip

Let us return to the examples above. Consider a system of $N = 4$ competing Brownian particles. A collision of the type~\eqref{type1} is equivalent to 
$Z_1(t) = Z_2(t) = Z_3(t) = 0$, that is, to the process $Z$ hitting the corner of the orthant $\BR^3_+$ at time $t$. A collision of the type~\eqref{type2} is equivalent to 
$Z_1(t) = Z_3(t) = 0$, that is, to the process of gaps $Z$ hitting the {\it edge} $\{x \in \BR^3_+\mid x_1 = x_3 = 0\}$ of the boundary $\pa \BR^3_+$ of the orthant $\BR^3_+$. Similarly, we can rewrite other types of collisions in terms of the gap process.  

In Section 3, we obtain results concerning an SRBM a.s.$\,$avoiding corners or edges. In Theorem~\ref{cornerthm}, we find a sufficient condition for an $\SRBM^d(R, \mu, A)$ to a.s. avoid the corner of the orthant $S = \BR^d_+$. In Theorem~\ref{corner2edge}, we find a sufficient condition for an $\SRBM^d(R, \mu, A)$ to a.s. avoid the ``edge" 
$$
S_I := \{x \in S\mid x_i = 0,\ i \in I\}
$$
of a given subset $I \subseteq \{1, \ldots, d\}$. This is done by reducing the property  of avoiding an edge to the property of avoiding a corner, and  allows us to find a sufficient condition for a.s. avoiding an edge $S_I$; see Corollary~\ref{general}. 

In Section 4, we find the relationship between the parameters of an SRBM and the parameters of the system of competing Brownian particles, see~\eqref{R12}, ~\eqref{mu} and~\eqref{A}. Then we apply results of Section 3 to finish the proofs of Theorem~\ref{totalcor} and~\ref{mainthm}. 

\subsection{Motivation} Systems of competing Brownian particles are used in Stochastic Finance: the process
\begin{equation}
\label{rankbasedmarketmodel}
\bigl(e^{X_1(t)}, \ldots, e^{X_N(t)}\bigr)'
\end{equation}
can be viewed as a stock market model, see \cite{BFK2005}. Here, $e^{X_i(t)}$ is the capitalization of the $i$th stock at time $t \ge 0$. In real world, stocks with smaller capitalizations have the following property: logarithms of their capitalizations have larger drift coefficients (which in financial terminology are called {\it growth rates}) and larger diffusion coefficients (which are called {\it volatilities}) than that of stocks with larger capitalizations. It is easy to construct a model~\eqref{rankbasedmarketmodel} which captures this property: just let 
$$
g_1 > g_2 > \ldots > g_N\ \ \mbox{and}\ \ \si_1 > \si_2 > \ldots > \si_N.
$$
For  applications to financial market models similar to~\eqref{rankbasedmarketmodel}, see the articles \cite{Ichiba11, FIK2013b, CP2010, JR2013b, MyOwn4}, the book \cite[Chapter 5]{F2002} and the somewhat more recent survey \cite[Chapter 3]{FK2009}. 

These systems also arise as discrete analogues of a so-called {\it nonlinear diffusion process}, governed by {\it McKean-Vlasov equation}, studied in \cite{Chaos1, Chaos2, Chaos3, Dawson}. As $N \to \infty$, systems of competing Brownian particles converge weakly (in a certain sense) to a nonlinear diffusion process, see \cite{S2012, JR2013a,4people}. 

Also, let us mention that systems of competing Brownian particles serve as scaling limits of a certain type of exclusion processes on $\MZ$, namely asymmetrically colliding random walks, see \cite{KPS2012}. 

Systems of competing Brownian particles were also studied in the papers \cite{Ichiba11, PS2010, PP2008, CP2010, IPS2012, IKS2013, IK2010, IchibaThesis, FIK2013, Reygner2014, JR2014}. 

\subsection{Organization of the paper} Section 2 contains rigorous definitions, main results: Theorems~\ref{totalcor} and~\ref{mainthm}, and examples (including the ones mentioned above). Section 3 is devoted to a  semimartingale reflected Brownian motion in the orthant,  and contains conditions for it to avoid edges of the boundary of this orthant. Section 4 applies results of Section 3 to systems of competing Brownian particles. Proofs of Theorems~\ref{totalcor}, ~\ref{mainthm} and~\ref{cams} are contained there.
Section 5 deals with a generalization of the concept of competing Brownian particles: the so-called {\it systems with asymmetric collisions}. The Appendix contains a few technical lemmas. 

\section{Formal Definitions and Main Results}

\subsection{Notation}  We denote by $I_k$ the $k\times k$-identity matrix. For a vector $x = (x_1, \ldots, x_d)' \in \BR^d$, let $\norm{x} := \left(x_1^2 + \ldots + x_d^2\right)^{1/2}$ be its Euclidean norm. 
For any two vectors $x, y \in \BR^d$, their dot product is denoted by $x\cdot y = x_1y_1 + \ldots + x_dy_d$. We compare vectors $x$ and $y$ componentwise: $x \le y$ if $x_i \le y_i$ for all $i = 1, \ldots, d$; $x < y$ if $x_i < y_i$ for all $i = 1, \ldots, d$; similarly for $x \ge y$ and $x > y$. We compare matrices of the same size componentwise, too. For example, we write $x \ge 0$ for $x \in \BR^d$ if $x_i \ge 0$ for $i = 1, \ldots, d$; $C = (c_{ij})_{1 \le i, j \le d} \ge 0$ if $c_{ij} \ge 0$ for all $i$, $j$. The symbol $a'$ denotes the transpose of (a vector or a matrix) $a$. 

Fix $d \ge 1$, and let $I \subseteq \{1, \ldots, d\}$ be a nonempty subset. Write its elements in increasing order: $I = \{i_1, \ldots, i_m\},\ \ 1 \le i_1 < i_2 < \ldots < i_m \le d$. For any $x \in \BR^d$, let
$[x]_I := (x_{i_1}, \ldots, x_{i_m})'$. For any $d\times d$-matrix $C = (c_{ij})_{1 \le i, j \le d}$, let $[C]_I := \left(c_{i_ki_l}\right)_{1 \le k, l \le m}$. We let $\mI := (1, \ldots, 1)'$ (the dimension of this vector depends on the context).

\subsection{Definitions}

Now, let us define systems of competing Brownian particles formally. Assume we have the usual setting: a filtered probability space $(\Oa, \CF, (\CF_t)_{t \ge 0}, \MP)$ with the filtration satisfying the usual conditions.  The term {\it standard Brownian motion} stands for a one-dimensional Brownian motion with drift coefficient zero and diffusion coefficient one, starting from zero. 

\begin{defn}
Consider a continuous adapted $\BR^N$-valued process 
$$
X = (X(t), t \ge 0),\ \ X(t) = (X_1(t), \ldots, X_N(t))'.
$$
For every $t \ge 0$, let $\mP_t$ be the permutation of $\{1, \ldots, N\}$ which:

\noindent
(i) {\it ranks the components of} $X(t)$, that is, $X_{\mP_t(i)}(t) \le X_{\mP_t(j)}(t)$ for $1 \le i < j \le N$;

\noindent
(ii) {\it resolves ties in   lexicographic order:} if $X_{\mP_t(i)}(t) = X_{\mP_t(j)}(t)$ and $i < j$, then $\mP_t(i) < \mP_t(j)$. 

\smallskip
Fix parameters $g_1, \ldots, g_N \in \BR$ and $\si_1, \ldots, \si_N > 0$, and let $W_1, \ldots, W_N$ be i.i.d. standard $(\CF_t)_{t \ge 0}$-Brownian motions. 

Suppose the process $X$ satisfies the following SDE:
\begin{equation}
\label{classicSDE}
\md X_i(t) = \SL_{k=1}^N1(\mP_t(k) = i)\left[g_k\md t + \si_k\md W_i(t)\right],\ \ i = 1, \ldots, N.
\end{equation}
Then $X$ is called a {\it classical system of $N$ competing Brownian particles}. For $k = 1, \ldots, N$, the process
$$
Y_k = (Y_k(t), t \ge 0),\ \ Y_k(t) \equiv X_{\mP_t(k)}(t)
$$
is called the {\it $k$th ranked particle}. 
\label{classicdef}
\end{defn}

We use the term {\it classical} to distinguish these systems from similar systems of competing Brownian particles with so-called {\it asymmetric collisions}; more on this in Section 4. 

\begin{defn}
Consider a system from Definition~\ref{classicdef}. We say that a {\it collision of order $M$ occurs at time $t \ge 0$,} if there exists $k = 1, \ldots, N$ such that  
$$
Y_{k}(t) = Y_{k+1}(t) = \ldots = Y_{k+M}(t).
$$
A collision of order $M = 2$ is called a {\it triple collision}. A collision of order $M = N-1$ is called a {\it total collision}. 
\end{defn}

As mentioned before, a related example of a total collision (for a slightly different SDE) was considered in the paper \cite{Bass1987}. 

There is another closely related concept. We can have, for example, $Y_1(t) = Y_2(t)$ and $Y_4(t) = Y_5(t) = Y_6(t)$ at the same moment $t \ge 0$. This is called a {\it multicollision} of a certain order (this particular one is of order $3$). 

\begin{defn} Consider a system from Definition~\ref{classicdef}, and fix a nonempty subset $I \subseteq \{1, \ldots, N-1\}$. A {\it multicollision with pattern} $I$ occurs at time $t \ge 0$ if 
$$
Y_k(t) = Y_{k+1}(t),\ \ \mbox{for all}\ \ k \in I.
$$
We shall sometimes say that {\it there are no multicollisions with pattern $I$} if a.s. there does not exist $t > 0$ such that there is a multicollision with pattern $I$ at time $t$. 
\label{Pat}
\end{defn}

A multicollision with pattern $I$ has order $M = |I|$. If $I = \{k, k+1, \ldots, l-2, l-1\}$, then a multicollision with pattern $I$ is, in fact, a multiple collision of particles with ranks $k, k+1, \ldots, l-1, l$. If $I = \{1, \ldots, N-1\}$, this is a total collision. If $I = \{k, l\}$, this is a simultaneous collision. If $I = \{k, k+1\}$, this is a triple collision. 

We can immediately state some results which reduce multicollisions to total collisions. 

\begin{lemma}  
 Fix $1 < N_1 \le N_2 < N$. Suppose that $\si_1, \ldots, \si_N \ge 0$ are such that for a system of competing Brownian particles with parameters $\si_{N_1}, \ldots, \si_{N_2}$ a multicollision with pattern $I \subseteq \{N_1, \ldots, N_2\}$ happens with positive probability. Then for a system of competing Brownian particles with parameters $\si_1,\ldots, \si_N$ this multicollision also happens with positive probability. 
\end{lemma}

\begin{proof} This follows from the relation between multicollisions and hitting edges of $\BR^{N-1}_+$ by the gap process, established in Lemma~\ref{red}, and from Theorem~\ref{corner2edge2}. 
\end{proof}

It is worth providing some references about a diffusion hitting a lower-dimensional manifold: the articles \cite{Friedman1974, Ramasubramanian1983, Ramasubramanian1988, CepaLepingle}, and the book \cite{FriedmanBook}.

\medskip
In this paper, we are interested in triple and simultaneous collisions, as well as the collisions of higher order $M \ge 4$. We examine whether the classical system of competing Brownian particles avoid collisions (and multicollisions) with given pattern. This paper contains two main results.  One is a sufficient condition for absence of   total collisions. The other is more general: a sufficient condition for the absence of   multicollisions with a given pattern. The approach taken in this article does not give   necessary and sufficient conditions for absence of multicollisions, only sufficient conditions; neither does it provide  conditions for having multicollisions with positive probability (as opposed to avoiding them).

\subsection{Avoiding a multicollision depends only on diffusion coefficients}

The following lemma tells us that the property of a system of competing Brownian particles to avoid multicollisions with a given pattern is independent of the initial conditions $x$ and the drift coefficients $g_1, \ldots, g_N$. In other words, it can possibly depend only on the diffusion coefficients $\si_1^2, \ldots, \si_N^2$. 

\begin{lemma} Take a classical system of competing Brownian particles from Definition~\ref{classicdef}. Fix $I \subseteq \{1, \ldots, N-1\}$, a pattern. Let $x \in \BR^N$ be the initial conditions, and let $\MP_x$ be the corresponding probability measure. Denote by 
\begin{equation}
\label{probb}
p\left(g_1,\, g_2,\, \ldots,\, g_N,\, \si_1,\, \si_2,\, \ldots\, \si_N,\, x\right)
\end{equation}
the probability that there exists a moment $t > 0$ such that the system, starting from $x$, will experience a multicollision with pattern $I$ at this moment. For fixed $\si_1, \ldots, \si_N > 0$, either 
$$
p\left(g_1,\, g_2,\, \ldots,\, g_N,\, \si_1,\, \si_2,\, \ldots\, \si_N,\, x\right) = 0\ \ \mbox{for all}\ \ x \in \BR^N,\ \ (g_k)_{1 \le k \le N} \in \BR^N,
$$
or
$$
p\left(g_1,\, g_2,\, \ldots,\, g_N,\, \si_1,\, \si_2,\, \ldots\, \si_N,\, x\right) > 0\ \ \mbox{for all}\ \ x \in \BR^N,\ \ (g_k)_{1 \le k \le N} \in \BR^N.
$$
\label{Indep}
\end{lemma}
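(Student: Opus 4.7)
The plan is to reduce the statement to the analogous property for the gap process, which was already established in Proposition~\ref{432}. Recall that by Lemma~\ref{red}, a multicollision with pattern $I$ at time $t$ for the system of competing Brownian particles is equivalent to the gap process $Z = (Z_1, \ldots, Z_{N-1})$ hitting the edge $S_I$ of the orthant $\BR^{N-1}_+$ at time $t$. Moreover, by the calculation recalled in this section, $Z$ is precisely an $\SRBM^{N-1}(R, \mu, A)$ with parameters given by~\eqref{R12}, \eqref{mu}, \eqref{A}, starting from the gap vector $z = (x_{(2)} - x_{(1)}, \ldots, x_{(N)} - x_{(N-1)})$, where $x_{(1)} \le \ldots \le x_{(N)}$ denote the order statistics of $x$.

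The first observation is that the reflection matrix $R$ in~\eqref{R12} depends only on $N$, and the covariance matrix $A$ in~\eqref{A} depends only on $\si_1^2, \ldots, \si_N^2$; meanwhile the drift vector $\mu$ in~\eqref{mu} depends only on $g_1, \ldots, g_N$, and the starting point $z$ is a function only of $x$. Thus, for fixed $\si_1, \ldots, \si_N > 0$, varying $(g_1, \ldots, g_N)$ and $x$ varies only the drift vector and the initial condition of the SRBM, while $R$ and $A$ remain unchanged.

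The second step is to invoke Proposition~\ref{432} applied to this $\SRBM^{N-1}(R, \mu, A)$: the truth value of
$$
\MP_{z}(\exists\, t > 0 : Z(t) \in S_I) = 0
$$
depends only on $R$ and $A$, not on $z$ or $\mu$. Hence for fixed $\si_1, \ldots, \si_N$, either this probability vanishes uniformly in $z$ and $\mu$, or it is strictly positive uniformly in $z$ and $\mu$. Translating back via Lemma~\ref{red} then gives the corresponding dichotomy for the probabilities $p(g_1, \ldots, g_N, \si_1, \ldots, \si_N, x)$, as required.

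No serious obstacle is anticipated: the whole argument is an application of Proposition~\ref{432}, combined with the explicit parameterization~\eqref{R12}--\eqref{A}. The only point warranting care is that every initial condition $x \in \BR^N$ for the named particles produces a legitimate starting point $z \in \BR^{N-1}_+$ for the gap process, and conversely every $z \in \BR^{N-1}_+$ arises this way (for instance from $x = (0, z_1, z_1+z_2, \ldots, z_1 + \ldots + z_{N-1})$), so the variation of $x$ sweeps out all possible initial conditions of the SRBM and we are indeed in the scope of Proposition~\ref{432}.
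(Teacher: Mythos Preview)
Your proposal is correct and follows essentially the same approach as the paper: reduce the multicollision event to the gap process hitting $S_I$ via Lemma~\ref{red}, observe that $R$ and $A$ depend only on the $\si_k$'s while $\mu$ and $z$ absorb the $g_k$'s and $x$, and then invoke Proposition~\ref{432}. The paper's proof is a one-line citation to exactly these ingredients, so your write-up is just a more explicit version of the same argument.
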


However, in the second case (when the probability~\eqref{probb} is positive) the exact value of this probability depends on the initial conditions $x$ and the drift coefficients $g_1, \ldots, g_N$. This follows from Remark 5 in \cite[Subsection 3.2]{MyOwn3} and connection between competing Brownian particles and an SRBM, discussed just above. 
The proof is postponed until Appendix.

\subsection{Sufficient conditions for avoiding total collisions}

Let us introduce some additional notation. Let $M \ge 2$. For
$$
\al = (\al_1, \ldots, \al_M)' \in \BR^M\ \ \mbox{and}\ \ l = 1, \ldots, M-1,
$$
we define
$$
c_{l}(\al) := -\frac{2(M-1)}{M}\al_1^2 + \frac{2(M+1)}M\SL_{p=2}^l\al_p^2 + \frac{2(M-1)(M-l) - 4l}{(M-l)M}\SL_{p=l+1}^M\al_p^2.
$$
We also denote by $\al^{\leftarrow} := (\al_M, \ldots, \al_1)'$ the vector $\al$ with components put in the reverse order. Note that $c_{M-1}(\al) =  c_{M-1}\left(\al^{\leftarrow}\right)$. Let 
\begin{equation}
\label{CP}
\CP(\al) := \min\left(c_1(\al),\, c_1\left(\al^{\leftarrow}\right),\, c_2(\al),\, c_2\left(\al^{\leftarrow}\right),\, \ldots,\, c_{M-2}(\al),\, c_{M-2}\left(\al^{\leftarrow}\right),\, c_{M-1}(\al)\right).
\end{equation}
For example, in cases $M = 2$ and $M = 3$ we have the following expressions for $\CP(\al)$:
\begin{equation}
\label{CP2}
\CP(\al_1, \al_2) = c_1(\al_1, \al_2) = -\al_1^2 - \al_2^2\,,
\end{equation}
\begin{equation}
\label{CP3}
\CP(\al_1, \al_2, \al_3) = \min\left(\frac83\al_2^2 - \frac43\al_1^2 - \frac43\al_3^2,\ \  \frac23\al_2^2 + \frac23\al_3^2 - \frac43\al_1^2,\ \  \frac23\al_1^2 + \frac23\al_2^2 - \frac43\al_3^2\right).
\end{equation}

\begin{thm}
\label{totalcor}
Consider a classical system of competing Brownian particles from Definition~\ref{classicdef}, and denote $$\si := (\si_1, \ldots, \si_N)' .$$ If $\,\CP(\si) \ge 0$ in the notation of \eqref{CP}, then a.s.   there is no total collision at any time $t>0$. 
\end{thm}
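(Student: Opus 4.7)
\emph{Proof plan.} The strategy follows the outline given in subsection~1.3.

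First, I pass from the named/ranked particles to the $(N-1)$-dimensional gap process
$$
Z(t) = (Y_2(t) - Y_1(t),\, \ldots,\, Y_N(t) - Y_{N-1}(t))'.
$$
A total collision at time $t > 0$ is precisely the event $\{Z(t) = 0\}$, so the theorem reduces to the statement that $Z$ a.s. avoids the corner of $\BR^{N-1}_+$. By subsection~4.2 and the formulas \eqref{R12}, \eqref{mu}, \eqref{A}, the gap process $Z$ is an $\SRBM^{N-1}(R, \mu, A)$ whose reflection matrix $R$ is tridiagonal with $1$ on the diagonal and $-1/2$ on the sub- and super-diagonals, and whose covariance matrix factors as $A = T\diag(\si_1^2, \ldots, \si_N^2)T'$, where $T$ is the $(N-1) \times N$ first-difference matrix.

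Second, I invoke Theorem~\ref{cornerthm}, which supplies a sufficient condition---depending only on $(R, A)$---for an $\SRBM^d(R, \mu, A)$ to a.s. avoid the corner of $\BR^d_+$. (The drift $\mu$ does not enter, since corner avoidance is a purely local question.) Applied at $d = N - 1$ to the parameters of the gap process, this furnishes an abstract sufficient condition for the non-occurrence of a total collision.

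Third, and this is the crux, I show that for the specific $R$ and $A$ above the hypothesis of Theorem~\ref{cornerthm} is equivalent to $\CP(\si) \ge 0$. Unfolding that hypothesis with the tridiagonal $R$ and $A = T\diag(\si_k^2)T'$ and collecting coefficients produces exactly the family
$$
c_l(\si) \ge 0, \qquad c_l(\si^{\leftarrow}) \ge 0 \quad (l = 1, \ldots, N-2), \qquad c_{N-1}(\si) \ge 0,
$$
that is, the condition $\CP(\si) \ge 0$. The two orientations $\si$ and $\si^{\leftarrow}$ reflect the symmetry of the corner under the involution $k \mapsto N - k$, which exchanges the leftmost and rightmost ranks.

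The main obstacle is this third step. The rational coefficients $-\tfrac{2(N-1)}{N}$, $\tfrac{2(N+1)}{N}$, and $\tfrac{2(N-1)(N-l) - 4l}{(N-l)N}$ appearing in the definition of $c_l$ must be produced on the nose from the algebra of the quadratic form in Theorem~\ref{cornerthm}, and keeping track of which faces of $\BR^{N-1}_+$ contribute which inequality is delicate bookkeeping. As a sanity check, for $N = 3$ the inequality $c_{N-1}(\si) = c_2(\si) \ge 0$ reads $2\si_2^2 \ge \si_1^2 + \si_3^2$, which is exactly the local-concavity condition of Proposition~\ref{elegant} for absence of triple collisions; the remaining inequalities $c_1(\si), c_1(\si^{\leftarrow}) \ge 0$ are redundant in the $N = 3$ case but become essential as $N$ grows.
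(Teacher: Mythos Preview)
Your outline matches the paper's strategy, but step three has two real gaps and one error.

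First, the hypothesis of Theorem~\ref{cornerthm} is $\tr(R^{-1}A) \ge 2c_+$ with $c_+ = \max_{x \in S\setminus\{0\}} \frac{x'R^{-1}AR^{-1}x}{x'R^{-1}x}$, a Rayleigh-type quotient that is \emph{not} computed directly. The paper instead passes through the elementwise estimate of Lemma~\ref{simple}, $c_+ \le \max_{k\le l} (R^{-1}AR^{-1})_{kl}/\rho_{kl}$, valid because $R^{-1}$ has all entries positive. This replaces a variational problem by a finite family of $\binom{N-1}{2}+(N-1)$ inequalities $c_{k,l}(\si) := \tr(R^{-1}A) - 2(R^{-1}AR^{-1})_{kl}/\rho_{kl} \ge 0$. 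You skip this step, so it is unclear how you pass from the quadratic-form condition to a finite list of scalar inequalities in the $\si_p^2$.

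Second, that finite list has roughly $N^2/2$ members, not the $2N-3$ inequalities in $\CP(\si)\ge 0$. The reduction is Lemma~\ref{5555}(i): for $2 \le k \le l \le N-2$ one checks that the coefficient of each $\si_p^2$ in $c_{k,l}(\si)$ is nonnegative, so those interior inequalities hold automatically. Only the boundary cases $k=1$ or $l=N-1$ survive, and those are identified with $c_l(\si)$ and $c_{N-k}(\si^{\leftarrow})$. Your remark about ``which faces contribute which inequality'' gestures at this but does not supply the argument.

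Finally, your $N=3$ sanity check is backwards. From~\eqref{CP3} one has $c_1(\si)\ge 0 \iff 2\si_1^2 \le \si_2^2+\si_3^2$ and $c_1(\si^{\leftarrow})\ge 0 \iff 2\si_3^2 \le \si_1^2+\si_2^2$; these two \emph{imply} the concavity inequality $c_2(\si)\ge 0$, not the other way around (see Example~\ref{N3}). So $\CP(\si)\ge 0$ for $N=3$ is strictly stronger than local concavity, and your claim that $c_1,\,c_1(\si^{\leftarrow})$ are redundant is false.
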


\subsection{Examples of avoiding total collisions} In this subsection, we consider systems of $N = 3$, $N = 4$ and $N = 5$ particles. We apply Theorem~\ref{totalcor} to find a sufficient condition for a.s. avoiding total collisions. In particular, we compare our results for three particles to a necessary and sufficient condition~\eqref{concave}. We also compare results for $N = 4$ particles given by Theorem~\ref{totalcor} and Theorem~\ref{cams}. 

\begin{exm} {\it The case of $N = 3$ particles.} In this case, ``triple collision'' is a synonym for ``total collision''. The quantity $\CP(\si)$ is calcluated in~\eqref{CP3}. The inequality $\CP(\si) \ge 0$ is equivalent to the following system:
\begin{equation}
\label{CP3I}
\begin{cases}
\si_1^2 + \si_3^2 \le 2\si_2^2;\\
2\si_1^2  \le \si_2^2 + \si_3^2;\\
2\si_3^2  \le \si_2^2 + \si_1^2.
\end{cases}
\end{equation}
In fact, the first inequality in~\eqref{CP3I} follows from the second and the third ones. Therefore,~\eqref{CP3I} is equivalent to
\begin{equation}
\label{CP3II}
\begin{cases}
2\si_1^2  \le \si_2^2 + \si_3^2;\\
2\si_3^2  \le \si_2^2 + \si_1^2.
\end{cases}
\end{equation}
This sufficient condition is more restrictive than~\eqref{concave}, which for $N = 3$ particles takes the form $2\si_2^2 \ge \si_1^2 + \si_3^2$. Therefore, Theorem~\ref{totalcor} gives a weaker result than the result from \cite{MyOwn3}, mentioned in Proposition~\ref{elegant}. In other words, for three particles the results from this paper do not give us anything new compared to \cite{MyOwn3, IK2010}, which is not surprising: in \cite{MyOwn3, IK2010}, they found a necessary and sufficient condition for avoiding total collision for $N = 3$ particles.  
\label{N3}
\end{exm}

\begin{exm} {\it The case of $N = 4$ particles.} The result was stated in the Introduction as Theorem~\ref{example1}. The condition 
$\CP(\si) \ge 0$ holds,  if and only if all the following five inequalities hold: 
\begin{equation}
\label{CP4I}
\begin{cases}
9\si_1^2 \le 7\si_2^2 + 7\si_3^2 + 7\si_4^2;\\
3\si_1^2  \le 5\si_2^2 + \si_3^2 + \si_4^2;\\
3\si_1^2 + 3\si_4^2  \le 5\si_2^2 + 5\si_3^2;\\
3\si_4^2 \le \si_1^2 + \si_2^2 + 5\si_3^2;\\
9\si_4^2 \le 7\si_1^2 + 7\si_2^2 + 7\si_3^2.
\end{cases}
\end{equation}

As mentioned in Section 1, let $\si_1^2 = \si_2^2 = \si_4^2 = 1$, and $\si_3^2 = 0.9$. Then there are triple collisions between the particles $Y_2$, $Y_3$ and $Y_4$ with positive probability, because the sequence $(\si_1^2, \si_2^2, \si_3^2, \si_4^2)$ is not concave: it does not satisfy the condition~\eqref{concave}. But the condition $\CP(\si) \ge 0$ is satisfied, hence there are a.s. no total collisions.  Note that this example satisfies the conditions of Theorem~\ref{totalcor}, but fails to satisfy those of Theorem~\ref{cams}.
\label{N4}
\end{exm}

\begin{exm} {\it The case of $N = 5$ particles.} In this case  $\CP(\si) \ge 0$ is equivalent to the following seven inequalities:
\begin{equation}
\label{CP5I}
\begin{cases}
8\si_1^2  \le 7\si_2^2 + 7\si_3^2 + 7\si_4^2 + 7\si_5^2;\\
6\si_1^2 \le 9\si_2^2 + 4\si_3^2 + 4\si_4^2 + 4\si_5^2;\\
4\si_1^2  \le 6\si_2^2 + 6\si_3^2 + \si_4^2 + \si_5^2;\\
2\si_1^2 + 2\si_5^2  \le 3\si_2^2 + 3\si_3^2 + 3\si_4^2;\\
8\si_5^2  \le 7\si_4^2 + 7\si_3^2 + 7\si_2^2 + 7\si_1^2;\\
6\si_5^2 \le 9\si_4^2 + 4\si_3^2 + 4\si_2^2 + 4\si_1^2;\\
4\si_5^2  \le 6\si_4^2 + 6\si_3^2 + \si_2^2 + \si_1^2.
\end{cases}
\end{equation}

By analogy with the previous example, let $\si_1^2 = \si_2^2 = \si_4^2 = \si_5^2 = 1$, and $\si_3^2 = 0.9$. Then there are triple collisions among the particles $Y_2$, $Y_3$ and $Y_4$ with positive probability, but a.s. no total collisions. 
\label{N5}
\end{exm}

\begin{exm} {\it An application of Theorem \ref{cams}.} Take $\si_1^2 = \si_3^2 = 10$ and $\si_2^2 = \si_4^2 = 1$. Then by Theorem~\ref{cams} there are a.s. no total collisions, but this fails to satisfy the conditions of Theorem~\ref{totalcor}. This, together with Example~\ref{N4}, shows that none of the two results: Theorem~\ref{totalcor} applied to the case of $N = 4$ particles, and Theorem~\ref{cams}, is stronger than the other one. 
\label{1010}
\end{exm}

\subsection{A sufficient condition for avoiding multicollisions of a given pattern}

For every nonempty finite subset $I \subseteq \MZ$, denote by $\ol{I} := I\cup\{\max I+1\}$ the augmentation of $I$ by the integer following its maximal element. For example, if $I = \{1, 2, 4, 6\}$, then $\ol{I} = \{1, 2, 4, 6, 7\}$. A nonempty finite subset $I \subseteq \MZ$ is called a {\it discrete interval} if it has the form $\{k, k+1, \ldots, l-1, l\}$ for some $k, l \in \MZ,\ k \le l$. For example, the sets $\{2\}, \{3, 4\}, \{-2, -1, 0\}$ are discrete intervals, and the set $\{3, 4, 6\}$ is not. Two disjoint discrete intervals are called {\it adjacent} if their union is also a discrete interval. For example, discrete intervals $\{1, 2\}$ and $\{3, 4\}$ are adjacent, while $\{3, 4, 5\}$ and $\{10, 11\}$ are not. 

Every nonempty finite subset $I \subseteq \MZ$ can be decomposed into a finite union of disjoint non-adjacent discrete intervals: for example, $I = \{1, 2, 4, 8, 9, 10, 11, 13\}$ can be decomposed as $\{1, 2\}\cup\{4\}\cup\{8, 9, 10, 11\}\cup\{13\}$. This decomposition is unique. 
The non-adjacency is necessary for uniqueness: for example, $\{1, 2\}\cup\{4\}\cup\{8, 9, 10\}\cup\{11\}\cup\{13\}$ is also a decomposition into a finite union of disjoint discrete intervals, but $\{8, 9, 10\}$ and $\{11\}$ are adjacent. 

For a vector $\al = (\al_1, \ldots, \al_M)' \in \BR^M$, define
\begin{equation}
\label{CT}
\CT(\al) = \frac{2(M-1)}{M}\SL_{p=1}^M\al_p^2.
\end{equation} 

For every discrete interval $I = \{k, \ldots, l\} \subseteq \{1, \ldots, N\}$, let $\CP(I) := \CP\left(\si_k, \ldots, \si_l\right)$ and $\CT(I) := \CT\left(\si_k, \ldots, \si_l\right)$. 

Consider a subset $I \subseteq \{1, \ldots, N-1\}$. Suppose it has the following decomposition into the union of non-adjacent discrete disjoint intervals:
\begin{equation}
\label{decomp}
I = I_1\cup I_2\cup \ldots \cup I_r.
\end{equation}

\begin{defn} We say that $I$ {\it satisfies assumption (A)} if 
\begin{equation}
\label{conditionA}
\SL_{\substack{j=1\\ j \ne i}}^r\CT(\ol{I}_j) + \CP(\ol{I}_i) \ge 0,\ \ i = 1, \ldots, r.
\end{equation}
We say that $I$ {\it satisfies assumption (B)} if at least one of the following is true:
\begin{itemize}
\item at least two of discrete intervals $I_1, \ldots, I_r$ are singletons;
\item at least one of discrete intervals $I_1, \ldots, I_r$ consists of two elements $\{k-1, k\}$, and the sequence $(\si^2_j)$ has {\it local concavity} at $k$:
\begin{equation}
\label{localconcavity}
\si_k^2 \ge \frac12\left(\si_{k-1}^2 + \si_{k+1}^2\right);
\end{equation}
\item there exists a subset 
$$
I' = I_{i_1}\cup I_{i_2}\cup \ldots \cup I_{i_s}
$$
which satisfies the assumption (A).
\end{itemize}
\label{defnasmp} 
\end{defn}

\begin{rmk} (i) If a subset $I \subseteq \{1, \ldots, N-1\}$ is a discrete interval, that is, the decomposition~\eqref{decomp} is trivial, then Assumption (A) is equivalent to $\CP(\ol{I}) \ge 0$. 

(ii) If a subset $I \subseteq \{1, \ldots, N-1\}$ is a discrete interval of three or more elements, then Assumption (B) is equivalent to $\CP(\ol{I}) \ge 0$.

(iii) If a subset $I \subseteq \{1, \ldots, N-1\}$ contains two elements: $I = \{k, l\},\ k < l$, then Assumption (B) is automatically satisfied if $k + 1 < l$. If $k+1 = l$, then Assumption (B) is equivalent to the local concavity at $l$: 
$$
\si_{l}^2 \ge \frac12\left(\si_{l+1}^2 + \si_{l-1}^2\right).
$$
Indeed, as mentioned in Example~\ref{N3}, the condition $\CP(\ol{I}) \ge 0$ is more restrictive than local concavity at $l$. 
\label{reduct}
\end{rmk}

\begin{thm} Consider a system of competing Brownian particles from Definition~\ref{classicdef}. Fix a subset $J \subseteq \{1, \ldots, N-1\}$. Suppose every subset $I$ such that $J \subseteq I \subseteq \{1, \ldots, N-1\}$ satisfies assumption (B). Then there a.s. does not exist $t > 0$ such that the system has a multicollision with pattern $J$ at time $t$. 
\label{mainthm}
\end{thm}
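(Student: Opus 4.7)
The plan is to translate the problem into the language of the gap SRBM $Z=(Z_1,\ldots,Z_{N-1})$ and reduce it to the edge-avoidance results of Section~3. A multicollision with pattern $J$ at time $t$ is equivalent to $Z_k(t)=0$ for every $k\in J$, i.e.\ $Z(t)\in S_J:=\{x\in\BR^{N-1}_+ : x_k=0,\ k\in J\}$. Since $S_J=\bigsqcup_{I\supseteq J}\mathrm{ri}(S_I)$, it suffices to show, for every $I$ with $J\subseteq I\subseteq\{1,\ldots,N-1\}$, that $Z$ a.s.\ does not hit $\mathrm{ri}(S_I)$. I would phrase this as an edge-avoidance problem for the $\SRBM$ $Z$ with parameters $R$, $\mu$, $A$ given by~\eqref{R12},~\eqref{mu},~\eqref{A}, and ultimately apply Corollary~\ref{general} from Section~3.

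Fix such an $I$, and write its canonical decomposition $I=I_1\cup\ldots\cup I_r$ into non-adjacent disjoint discrete intervals. Geometrically, being in $\mathrm{ri}(S_I)$ means the ranked particles group into exactly $r$ clusters indexed by $\ol{I}_1,\ldots,\ol{I}_r$. I would now invoke Assumption~(B) for $I$ and split into three cases corresponding to its bullets. If two of the $I_j$ are singletons $\{k\},\{l\}$ with $|k-l|\ge 2$, then hitting $\mathrm{ri}(S_I)$ forces a simultaneous but non-adjacent pair of collisions at ranks $k$ and $l$, which is ruled out via the local stochastic comparison of each isolated gap with a squared-Bessel process of dimension $\ge 2$, as in the companion paper \cite{MyOwn3}. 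If some $I_j=\{k-1,k\}$ together with local concavity at $k$ holds, then $\mathrm{ri}(S_I)$ carries the triple collision $Y_{k-1}=Y_k=Y_{k+1}$, ruled out by the local analogue of Proposition~\ref{elegant}. The third bullet passes to a subset $I'\subseteq I$ satisfying Assumption~(A); since $\mathrm{ri}(S_I)\subseteq S_{I'}$, it suffices to avoid $S_{I'}$.

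The main technical work is to show that Assumption~(A) for $I'=I_{i_1}\cup\ldots\cup I_{i_s}$ implies a.s.\ avoidance of the edge $S_{I'}$ via Corollary~\ref{general}. Using the edge-to-corner reduction of Theorem~\ref{corner2edge}, this becomes showing that the ``collapsed'' SRBM on $\BR^{|I'|}_+$ almost surely avoids the corner. I would compute the restricted covariance $[A]_{I'}$ from~\eqref{A} and exploit its block structure (one block per cluster $\ol{I}_{i_j}$), recognising that on the $j$-th block the restricted dynamics reproduce those of a gap SRBM of a single cluster $\ol{I}_{i_j}$ for which $\CP(\ol{I}_{i_j})$ is precisely the sufficient condition of Theorem~\ref{totalcor}, while each other block contributes the trace-type quantity $\CT(\ol{I}_{i_i})=\tfrac{2(M-1)}{M}\sum_p\si_p^2$ through the diagonal of $[A]_{I'}$. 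A direct calculation then identifies the scalar condition of Corollary~\ref{general} with the $s$ inequalities $\sum_{j\ne i}\CT(\ol{I}_{i_j})+\CP(\ol{I}_{i_i})\ge 0$, which is Assumption~(A).

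The main obstacle is precisely this block-wise decoupling. The reflection matrix~\eqref{R12} restricted to $S_{I'}$ has non-trivial off-diagonal entries linking different clusters through the reflection at intermediate non-zero gaps, and one must verify that, despite these couplings, the hypothesis of Corollary~\ref{general} still splits as a sum of a $\CP$-contribution from the ``active'' cluster and $\CT$-contributions from the others. Everything else---the reduction to face avoidance, the stratification into relative interiors $\mathrm{ri}(S_I)$, and the handling of the first two bullets of Assumption~(B)---should follow routinely from the SRBM setup of Sections~3 and~4 together with the squared-Bessel comparisons developed in the companion paper.
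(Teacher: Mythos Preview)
Your overall strategy---translate to the gap SRBM, reduce edge-avoidance to corner-avoidance via Theorem~\ref{corner2edge}, and identify the scalar condition with the $\CP/\CT$ inequalities---is the same as the paper's. But two points deserve correction.

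\medskip
\textbf{The ``main obstacle'' is not there.} You worry that $[R]_{I'}$ has off-diagonal entries linking different clusters. It does not: the matrix $R$ in~\eqref{R12} is \emph{tridiagonal}, and the discrete intervals $I_{i_1},\ldots,I_{i_s}$ are pairwise non-adjacent, so $R_{kl}=0$ whenever $k$ and $l$ lie in different $I_{i_j}$'s. Hence $[R]_{I'}=\diag\bigl([R]_{I_{i_1}},\ldots,[R]_{I_{i_s}}\bigr)$, and likewise $[A]_{I'}$ is block-diagonal because $A$ in~\eqref{A} is also tridiagonal. This block structure is exactly what the paper exploits: the restricted SRBM on $I'$ factors as a product of \emph{independent} SRBMs, one per cluster, and the condition of Theorem~\ref{cornerthm}/Lemma~\ref{simple} splits cleanly into the sum $\sum_{j\ne i}\CT(\ol{I}_{i_j})+\CP(\ol{I}_{i_i})$ with no cross terms to control. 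What you flagged as the hard step is in fact the key simplification.

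\medskip
\textbf{The logical structure is inverted.} You stratify $S_J=\bigcup_{I\supseteq J}\mathrm{ri}(S_I)$ and then, for each $I$, argue that the \emph{full} SRBM $Z$ avoids $\mathrm{ri}(S_I)$. In the third bullet you pass to $S_{I'}\supseteq\mathrm{ri}(S_I)$ and invoke Theorem~\ref{corner2edge} to reduce to corner-avoidance of the restricted SRBM on $\BR^{|I'|}_+$---but Theorem~\ref{corner2edge} requires this for \emph{every} $I''\supseteq I'$, not just $I''=I'$, so your reduction is incomplete. The paper avoids this circularity by applying Theorem~\ref{corner2edge} \emph{once}, at the level of $J$: it proves (Lemma~\ref{cornerstone}) that for every $I\supseteq J$ the restricted process $\SRBM^{|I|}([R]_I,0,[A]_I)$ avoids the corner, using Assumption~(B) for that $I$. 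The three bullets of~(B) are then handled for the \emph{restricted} SRBM, where the block-diagonal factorization makes the two-singleton and local-concavity cases immediate (independent components), and Assumption~(A) feeds directly into Theorem~\ref{cornerthm}. Your first two bullet arguments, phrased for the full SRBM near $\mathrm{ri}(S_I)$, would need extra localization work that the paper's route makes unnecessary.
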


The following immediate corollary gives a sufficient condition for absence of multicollisions of a given order (and, in particular, multiple collisions of a given order). 

\begin{cor} Consider a classical system of competing Brownian particles from Definition~\ref{classicdef}. Fix an integer $M = 3, \ldots, N$, and suppose that every subset $I \subseteq \{1, \ldots, N-1\}$ with $|I| \ge M$ satisfies condition~\eqref{conditionA}. Then a.s. there does not exist $t > 0$ such that the system has a multicollision (and, in particular, a collision) of order $M$ 
\end{cor}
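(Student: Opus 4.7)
The plan is to deduce this corollary as a direct consequence of Theorem~\ref{mainthm}, once one makes the tautological observation that Assumption~(A) is a special case of Assumption~(B). By Definition~\ref{Pat}, a multicollision of order $M$ at some $t > 0$ is exactly a multicollision with pattern $J$ at time $t$ for some $J \subseteq \{1, \ldots, N-1\}$ with $|J| = M$. Since there are only finitely many such patterns $J$, a finite union bound reduces the problem to showing that, for any fixed $J$ with $|J| = M$, a.s.\ there is no $t > 0$ at which a multicollision with pattern $J$ occurs.

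Next I would check the hypothesis of Theorem~\ref{mainthm} for such a fixed $J$. Let $I$ be any subset with $J \subseteq I \subseteq \{1, \ldots, N-1\}$. Then $|I| \ge |J| = M$, so by the standing hypothesis of the corollary, $I$ satisfies condition~\eqref{conditionA}, i.e.\ Assumption~(A). The small but crucial point is that Assumption~(A) implies Assumption~(B): writing $I = I_1 \cup \cdots \cup I_r$ for its decomposition into non-adjacent discrete intervals, one takes the full index set $\{i_1, \ldots, i_s\} = \{1, \ldots, r\}$ in the third bullet of Definition~\ref{defnasmp}, so that $I' = I$ itself is a subset of the decomposition of $I$ which satisfies Assumption~(A). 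Thus every such $I$ satisfies Assumption~(B).

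With the hypothesis verified, Theorem~\ref{mainthm} applied to $J$ gives the absence of multicollisions with pattern $J$, and taking the intersection over the finitely many patterns $J$ of cardinality $M$ completes the proof. There is no genuine obstacle here: the corollary is a routine application of the main theorem, and the single nontrivial logical step, namely (A) $\Rightarrow$ (B), is built directly into the third clause of Definition~\ref{defnasmp}. I would also briefly note the parenthetical ``in particular, a collision'' statement: a collision of order $M$ in the sense of Definition~\ref{classicdef} is a multicollision with pattern of the special form $J = \{k, k+1, \ldots, k+M-1\}$, which has $|J| = M$, so it is covered by the same argument.
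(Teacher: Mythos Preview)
Your proof is correct and follows exactly the approach the paper has in mind: the paper states this as an ``immediate corollary'' of Theorem~\ref{mainthm} without further argument, and your verification that Assumption~(A) implies Assumption~(B) via the third clause of Definition~\ref{defnasmp}, together with the finite union over patterns $J$ of size $M$, is precisely the intended reasoning.
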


\subsection{Examples of avoiding multicollisions} In this subsection, we apply Theorem~\ref{mainthm} to systems with a small number of particles: $N = 4$ and $N = 5$. We consider different patterns of multicollisions. 

\begin{exm} {\it Let $N = 4$ (four particles) and $J = \{1, 3\}$.} (This was already mentioned in the Introduction, in Theorem~\ref{example1}.) A multicollision with pattern $J$ is the same as a simultaneous collision of the following type:
\begin{equation}
\label{46}
Y_1(t) = Y_2(t)\ \ \mbox{and}\ \ Y_3(t) = Y_4(t).
\end{equation}
We need to check Assumption (B) for subsets $I = J = \{1, 3\}$ and $I = \{1, 2, 3\}$. The subset $I = \{1, 2, 3\}$ is a discrete interval. According to Remark~\ref{reduct}, we can apply Example~\ref{N4}, and rewrite Assumption (B) as the system of five inequalities~\eqref{CP4I}. For $I = \{1, 3\}$, the decomposition~\eqref{decomp} of $I$ into the union of disjoint non-adjacent discrete intervals has the following form: $I = \{1\}\cup\{3\}$.  Therefore, Assumption (B) is always satisfied. 
Therefore, the system of five inequalities~\eqref{CP4I} is sufficient not only for avoiding total collisions in a system of four particles, but also for avoiding multicollisions~\eqref{46}, with pattern $J = \{1, 3\}$.

\label{weird}
\end{exm}

\begin{exm} {\it Let $N = 4$ and $J  =\{1, 2\}$.} Let us find a sufficient condition for a.s. avoiding triple collisions of the type $Y_1(t) = Y_2(t) = Y_3(t)$. (This was already mentioned in the Introduction, as Theorem~\ref{example3}.) There are two subsets $I$ such that $J \subseteq I \subseteq \{1, 2, 3\}$: $I = \{1, 2\}$ and $I = \{1, 2, 3\}$. These two sets are both discrete intervals. As mentioned in the Remark~\ref{reduct}, Assumption (B) for $I = \{1, 2, 3\}$ is equivalent to 
$\CP(\ol{I}) \ge 0$, which, in turn, is equivalent to~\eqref{CP4I}. 
Assumption (B) for $I = \{1, 2\}$ is equivalent to local concavity at index $2$: $2\si_2^2  \ge \si_1^2 + \si_3^2$. 
 We can write this as the system of six inequalities: local concavity at $2$  and the five inequalities~\eqref{CP4I} from Example~\ref{N4}. 
\label{eleg2} 
\end{exm}

\begin{exm} {\it Consider $N = 5$ (five particles) and take the pattern $J = \{1, 2, 3\}$. } This corresponds to a collision of the following type:
\begin{equation}
\label{0091}
Y_1(t) = Y_2(t) = Y_3(t) = Y_4(t).
\end{equation}
There are two subsets $I$ such that $J \subseteq I \subseteq \{1, 2, 3, 4\}$: $I = J = \{1, 2, 3\}$ and $I = \{1, 2, 3, 4\}$. These two sets are both discrete intervals. As mentioned in the Remark~\ref{reduct}, Assumption (B) for each of these sets $I$ takes the form $\CP(\ol{I}) \ge 0$: $\CP(\{1, 2, 3, 4\}) \ge 0$ and $\CP(\{1, 2, 3, 4, 5\}) \ge 0$. We can write them as the system of twelve inequalities: the five inequalities~\eqref{CP4I} from Example~\ref{N4},  and the seven inequalities~\eqref{CP5I} from Example~\ref{N5}. 
\label{exmfive1}
\end{exm}

\begin{exm} {\it Consider $N = 5$ and take the pattern $J = \{1, 2, 4\}$}. This corresponds to a collision 
\begin{equation}
\label{0092}
Y_1(t) = Y_2(t) = Y_3(t),\ \ \mbox{and}\ \ Y_4(t) = Y_5(t).
\end{equation}
There are two subsets $I$ such that $J \subseteq I \subseteq \{1, 2, 3, 4\}$: $I = J = \{1, 2, 4\}$ and $I = \{1, 2, 3, 4\}$. The set $I = \{1, 2, 3, 4\}$ is a discrete interval; by Remark~\ref{reduct}, Assumption (B) for $I = \{1, 2, 3, 4\}$ takes the form $\CP(\{1, 2, 3, 4, 5\}) \ge 0$. This is equivalent to the conjunction of the seven inequalities~\eqref{CP5I} from Example~\ref{N5}. For $I = \{1, 2, 4\}$, the situation is more complicated. The decomposition of this $I$ into a union of disjoint non-adjacent discrete intervals is $I = \{1, 2\}\cup \{4\}$. Therefore, Assumption (B) holds for this set $I$ in one of the following cases:
\begin{itemize}
\item if there is local concavity at $2$: $\si_2^2 \ge \left(\si_1^2 + \si_3^2\right)/2$;
\item Assumption (A) holds for $\{1, 2\}$, which is equivalent to $\CP(\{1, 2, 3\}) \ge 0$, which, in turn, is a stronger assumption than local concavity at $2$ (see Example~\ref{N3});
\item Assumption (A) holds for $\{4\}$, which is when $\CP(\{4, 5\}) \ge 0$; but this is never true, see~\eqref{CP2};
\item Assumption (A) holds for $\{1, 2\}\cup\{4\}$, which is equivalent to 
\begin{equation}
\label{35403}
\CT(\{1, 2, 3\}) + \CP(\{4, 5\}) \ge 0,\ \ \CT(\{4, 5\}) + \CP(\{1, 2, 3\}) \ge 0.
\end{equation}
\end{itemize}
But $\CP(\{4, 5\}) = \CP(\si_4, \si_5) = -\si_4^2 - \si_5^2$, as in~\eqref{CP2}, and $\CP(\{1, 2, 3\}) = \CP(\si_1, \si_2, \si_3)$ is given by~\eqref{CP3}. Therefore, we have:
\begin{equation}
\label{45}
\CT(\{1, 2, 3\}) + \CP(\{4, 5\}) = \frac43\left(\si_1^2 + \si_2^2 + \si_3^2\right) - \si_4^2 - \si_5^2 \ge 0,
\end{equation}
which can be written as
\begin{equation}
\label{4509}
4\si_1^2 + 4\si_2^2 + 4\si_3^2 \ge 3\si_4^2 + 3\si_5^2.
\end{equation}
The other condition $\CT(\{4, 5\}) + \CP(\{1, 2, 3\}) \ge 0$ is equivalent to the system of the following three inequalities:
\begin{equation}
\label{413}
\begin{cases}
4\si_1^2 + 4\si_3^2  \le 8\si_2^2 + 3\si_4^2 + 3\si_5^2;\\
4\si_1^2  \le 2\si_2^2 + 2\si_3^2 + 3\si_4^2 + 3\si_5^2;\\
4\si_3^2  \le 2\si_1^2 + 2\si_2^2 + 3\si_4^2 + 3\si_5^2.
\end{cases}
\end{equation}
Therefore,~\eqref{35403} is equivalent to the system of~\eqref{4509} and~\eqref{413}:
\begin{equation}
\label{34234}
\begin{cases}
4\si_1^2 + 4\si_3^2  \le 8\si_2^2 + 3\si_4^2 + 3\si_5^2;\\
4\si_1^2  \le 2\si_2^2 + 2\si_3^2 + 3\si_4^2 + 3\si_5^2;\\
4\si_3^2  \le 2\si_1^2 + 2\si_2^2 + 3\si_4^2 + 3\si_5^2;\\
4\si_1^2 + 4\si_2^2 + 4\si_3^2 \ge 3\si_4^2 + 3\si_5^2.
\end{cases}
\end{equation}
 Assumption (B) holds for $I = \{1, 2\}\cup\{4\}$ if and only if there is local concavity at $2$ or~\eqref{34234} hold. Thus, the system of seven inequalities~\eqref{CP5I} from Example~\ref{N5}, together with local concavity at $2$ or the four inequalities~\eqref{34234}, is a sufficient condition for avoiding multicollisions of pattern $\{1, 2, 4\}$. 
\label{exmfive2}
\end{exm}

\begin{rmk} We can also make use of the condition~\eqref{Bruggeman} instead of the five inequalities~\eqref{CP4I}. If the condition~\eqref{Bruggeman} is satisfied, then there are a.s. no simultaneous collisions~\eqref{46} at any time $t > 0$. Similarly, in all of the examples involving $N = 4$ particles avoiding certain types of collisions, we can substitute the condition~\eqref{Bruggeman} instead of the five inequalities~\eqref{CP4I}, 
and the statement will still be true. In Example~\ref{eleg2}, the two conditions:~\eqref{Bruggeman} and the local concavity at the index $2$, guarantee absence of triple collisions $Y_1(t) = Y_2(t) = Y_3(t)$. 
The same works for Examples~\ref{exmfive1} and~\ref{exmfive2}. 
\end{rmk}

\begin{exm} {\it Suppose we have three or more particles: $N \ge 3$.} Consider the case when all diffusion coefficients are equal to one: $\si_1 = \ldots = \si_N = 1$. Then there are no triple and multiple collisions, as well as no multicollisions of order $M \ge 3$. To show this, we do not even need to use Theorem~\ref{mainthm}. Indeed, using Girsanov transformation as in~\cite[Subsection 3.2]{MyOwn3}, we can transform the classical system of competing Brownian particles into $N$ independent Brownian motions with zero drifts and unit diffusions. Since the Bessel process of dimension two a.s. does not return to the origin, there are a.s. no triple collisions and multicollisions of order $M \ge 3$ for the system of independent Brownian motions. 

Still, we can apply our results of this article to the case of unit diffusion coefficients. Consider total collisions and apply Theorem~\ref{totalcor}. Let $\si_1 = \ldots = \si_N = 1$, so that $\si = \mathbf{1} = (1, 1, \ldots, 1)'$; then it is straightforward to calculate that
$$
c_l(\si) = c_l(\si^{\leftarrow}) = 2N - 6,\ l = 1, \ldots, N- 1.
$$
Therefore, we have:
$$
\CP(\si) = \min(c_1(\si),\, \ldots,\, c_{N-2}(\si),\, c_{N-1}(\si),\, c_1(\si^{\leftarrow}),\, \ldots,\, c_{N-2}(\si^{\leftarrow})) = 2N - 6 \ge 0.
$$
Apply Theorem~\ref{totalcor}: the system avoids total collisions. How does this result change if we move the diffusion coefficients $\si_1^2, \ldots, \si_N^2$ a little away from $1$? In other words, if the vector $\si$ is in a small neighborhood of $\mathbf{1} = (1, \ldots, 1)' \in \BR^N$, what can we say about absence of total collisions? 

If $N = 3$, then $\CP(\mathbf{1}) = 0$. Even in a small neighborhood of $\mathbf{1}$, we can have either $\CP(\si) \ge 0$ or $\CP(\si) < 0$. Therefore, we cannot claim that in a certain neighborhood of $\mathbf{1}$ we do not have any total (in this case, triple) collisions. 
This is consistent with the results of \cite{MyOwn3}. Indeed, the inequality~\eqref{concave} takes the form
\begin{equation}
\label{concave3}
\si_2^2 \ge \frac12\left(\si_1^2 + \si_3^2\right).
\end{equation}
This becomes an equality for $\si = (\si_1, \si_2, \si_3)' = \mathbf{1}$.
The point $\mathbf{1}$ lies at the boundary of the set of points in $\BR^3$ given by~\eqref{concave3}. Or, equivalently, in any neighborhood of $\mathbf{1}$ there are both points $\si$ which satisfy~\eqref{concave3} and which do not satisfy~\eqref{concave3}. 

But for $N \ge 4$ (four or more particles), we have: $\CP(\mathbf{1}) > 0$. Since $\CP(\si)$ is a continuous function of $\si$, there exists a neighborhood $\mathcal U$ of $\mathbf{1}$ such that for all $\si \in \mathcal U$ we have: $\CP(\si) > 0$, and the system of competing Brownian particles does not have total collisions. 
\label{girsanov}
\end{exm}

\section{Semimartingale Reflected Brownian Motion (SRBM) in the Orthant}

\subsection{Definitions} We informally described a semimartingale reflected Brownian motion (SRBM) in the orthant in Section 2. Now, let us define this process formally. Fix $d \ge 1$, the dimension. Let us describe the parameters of an SRBM: a {\it drift vector} $\mu \in \BR^d$, a $d\times d$-matrix $R = (r_{ij})_{1 \le i, j \le d}$ with $r_{ii} = 1$, $i = 1, \ldots, d$, which is called a {\it reflection matrix}, and another $d\times d$ symmetric positive definite matrix $A = (a_{ij})_{1 \le i, j \le d}$, which is called a {\it covariance matrix}. Recall the notation: $S = \BR^d_+$. Our goal is to define a Markov process in $S$ which behaves as a Brownian motion with drift vector $\mu$ and covariance matrix $A$ in the interior of  $S$, and reflects in the direction of the $i$th column of $R$ on the face $S_i$, for each $i = 1, \ldots, d$. 

Let $(\Oa, \CF, (\CF_t)_{t \ge 0}, \MP)$ be the standard setting: a filtered probability space with the filtration satisfying the usual conditions. 

\begin{defn} Fix $x \in S$. Consider the following three processes:

(i) a continuous adapted $S$-valued process $Z = (Z(t), t \ge 0)$;

(ii) an $((\CF_t)_{t \ge 0}, \MP)$-Brownian motion $B = (B(t), t \ge 0)$ in $\BR^d$ with drift vector $\mu$ and covariance matrix $A$, starting from $B(0) = x$;

(iii) another continuous adapted $\BR^d$-valued process 
$$
L = (L(t), t \ge 0),\ \ L(t) = (L_1(t), \ldots, L_N(t))',
$$
such that the following is true:
$$
Z(t) = B(t) + RL(t)\ \ \mbox{for}\ \ t \ge 0,
$$
and for each $k = 1, \ldots, d$, the process $L_k$ has the following properties: it is nondecreasing, $L_k(0) = 0$, and 
$$
\int_0^{\infty}Z_k(t)\md L_k(t) = 0,
$$
that is, $L_k$ can increase only when $Z_k = 0$. 

Then the process $Z$ is called a {\it semimartingale reflected Brownian motion} (SRBM) {\it in the orthant} $S$, with {\it drift vector} $\mu$, {\it covariance matrix} $A$, and {\it reflection matrix} $R$, {\it starting from} $x$. The process $B$ is called the {\it driving Brownian motion} for the SRBM $Z$. The process $L$ is called the {\it vector of regulating processes}  for $Z$, and its $i$th component $L_i$ is called the {\it regulating process corresponding to the face} $S_i$ of the boundary $\pa S$. As mentioned before, the process $Z$ is denoted by $\SRBM^d(R, \mu, A)$. 
\end{defn}

Let us define a few classes of matrices; see also a useful equivalent characterization in \cite[Lemma 2.5]{MyOwn3}. 

\begin{defn} Consider a $d\times d$-matrix $R = (r_{ij})_{1 \le i, j \le d}$. It is called a {\it reflection matrix} if $r_{ii} = 1$ for $i = 1, \ldots, d$. It is called a {\it completely-$\CS$ matrix} if for all nonempty $I \subseteq \{1, \ldots, d\}$ there exists $u \in \BR^{|I|}$ such that $u > 0$ and $[R]_Iu > 0$. It is called a {\it reflection nonsingular $\CM$-matrix} if $r_{ii} = 1$ for $i = 1, \ldots, d$,  $r_{ij} \le 0$ for $i \ne j$, and the spectral radius of the matrix $I_d - R$ is less than $1$. It is called {\it strictly copositive} if it is symmetric and for every $x \in S\setminus\{0\}$ we have: $x'Rx > 0$. 
\end{defn}

Now, let us state a general existence and uniqueness theorem for this process was proved in \cite{RW1988, TW1993, HR1981a}; see also the survey \cite{Wil1995}. 

\begin{prop} Fix any point $x \in S$. If $R$ is a reflection nonsingular $\CM$-matrix, then there exists and is unique in the strong sense an $\SRBM^d(R, \mu, A)$, starting from $x$. If, more generally, $R$ is a completely-$\CS$ reflection matrix, then this process exists and is unique in the weak sense. These processes for all $x \in S$ together form a Feller continuous strong Markov family. 
\label{existence}
\end{prop}

An SRBM in the orthant is the {\it heavy traffic limit} for series of queues, when the traffic intensity tends to one, see \cite{ReimanThesis, Rei1984, Har1978}. We can also define an SRBM in general convex polyhedral domains in $\BR^d$, see \cite{DW1995}. An SRBM in the orthant and in convex polyhedra has been extensively studied, see the survey \cite{Wil1995}, and articles  \cite{HR1981a, HR1981b, HW1987a, HW1987b, Wil1987,  RW1988, TW1993, DW1994, DK2003, Chen1996, Har1978, BDH2010, BL2007, DW1995, DH2012, DH1992, HH2009, Wil1985a, Wil1985b, Wil1998b, K1997, K2000, KW1996, R2000, KR2012a, KR2012b, KW1992a}.

For any subset $I \subseteq \{1, \ldots, d\}$, we let $S_I := \cap_{i \in I}S_i = \{x \in S\mid x_i = 0\ \mbox{for}\ i \in I\}$. If an $\SRBM^d(R, \mu, A)$ starts from $z \in S$, we denote the corresponding probability measure as $\MP_z$. 

The following proposition was shown in \cite[Subsection 3.2]{MyOwn3}.

\begin{prop} Fix a $d\times d$ reflection nonsingular $\CM$-matrix $R$ and a positive definite symmetric $d\times d$-matrix $A$. 
Let $\mu \in \BR^d$. Denote $Z = (Z(t), t \ge 0) = \SRBM^d(R, \mu, A)$. Consider the statement
\begin{equation}
\label{Indepe}
\MP_z\left(\exists\ t > 0:\ Z(t) \in S_I\right) = 0,
\end{equation}
Whether it is true or false does not depend on the initial condition $z \in S$ or on the drift vector $\mu \in \BR^d$; it depends only on the reflection matrix $R$ and the covariance matrix $A$.
\label{432}
\end{prop}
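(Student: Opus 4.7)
The plan is to handle the two independences by two different tools: Girsanov's theorem for the drift, and the Markov property (combined with a support theorem for the SRBM) for the initial condition.

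For the drift, fix $R$, $A$, the initial point $z$, and two drifts $\mu, \mu' \in \BR^d$. Since $R$ is a nonsingular $\CM$-matrix, Proposition~\ref{existence} yields $Z = \Phi(B_\mu)$ where $\Phi$ is the Lipschitz-continuous Skorokhod reflection map and $B_\mu$ is a Brownian motion with drift $\mu$ and covariance $A$ starting from $z$. On each $\CF_T$, Girsanov's theorem supplies an equivalent probability measure under which $B_\mu$ has drift $\mu'$ rather than $\mu$; the density is well-defined and strictly positive because $A$ is nondegenerate. Applying the deterministic map $\Phi$ preserves absolute continuity, so the laws of $Z$ on $\CF_T$ under $\MP^\mu_z$ and $\MP^{\mu'}_z$ are mutually absolutely continuous. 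Consequently $\MP^\mu_z(\exists t \in (0,T]: Z(t) \in S_I) = 0$ iff $\MP^{\mu'}_z(\exists t \in (0,T]: Z(t) \in S_I) = 0$, and sending $T\uparrow\infty$ establishes drift independence of~\eqref{Indepe}.

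For the initial condition, set $q(z) := \MP_z(\exists t > 0 : Z(t) \in S_I)$. The Markov property applied at any $s > 0$ gives the super-harmonic relation
\[
q(z) \ \ge\ \MP_z(\exists t > s : Z(t) \in S_I) \ =\ \ME_z[q(Z(s))],
\]
so if $q(z_0) = 0$ then $q(Z(s)) = 0$ $\MP_{z_0}$-a.s., and $q$ must vanish on the topological support of the law of $Z(s)$ under $\MP_{z_0}$. A support theorem for the SRBM, obtained by combining Lipschitz continuity of $\Phi$ with the full topological support of Brownian paths in $C([0,s], \BR^d)$, then shows that these supports, varied over $s > 0$, cover a dense subset of $S$.

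The main obstacle is upgrading \emph{$q$ vanishes on a dense subset of $S$} to \emph{$q \equiv 0$}. As a hitting probability of the closed set $S_I$, $q$ is only upper semi-continuous in $z$, which is the wrong direction for a naive limit argument. I would resolve this by exploiting that for small times the SRBM admits transition densities absolutely continuous with respect to Lebesgue measure on the interior $S^\circ$ (the process evolves as ordinary Brownian motion until first hitting $\pa S$), so that $q(Z(s)) = 0$ a.s. under $\MP_{z_0}$ forces $q$ to vanish on sets of positive Lebesgue measure; an iteration through the strong Markov property at successive hitting times of boundary faces then propagates the null set throughout $S$. This last propagation step is precisely what is carried out in \cite[Subsection 3.2]{MyOwn3}.
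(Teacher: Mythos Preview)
The paper does not actually prove this proposition; it simply quotes it from \cite[Subsection 3.2]{MyOwn3}. Your outline---Girsanov through the pathwise Skorokhod map $\Phi$ for the drift, and the Markov property together with absolute continuity of the SRBM transition laws for the initial condition---is precisely the standard route and is what is carried out in that reference. You even end by citing the same place for the final propagation step, so there is no real divergence to compare.

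One remark on your sketch for the initial condition: the cleanest way to close the argument is to show that for every $z,z'\in S$ and $s>0$ the laws $\MP_z(Z(s)\in\cdot)$ and $\MP_{z'}(Z(s)\in\cdot)$ are mutually absolutely continuous (both being equivalent to Lebesgue measure on $S$, say). Then from $q(z_0)=0$ one gets $q=0$ a.e.\ under $\MP_{z_0}(Z(s)\in\cdot)$, hence a.e.\ under $\MP_{z}(Z(s)\in\cdot)$, hence $\ME_z[q(Z(s))]=0$, and finally $q(z)=\lim_{s\downarrow 0}\ME_z[q(Z(s))]=0$. This bypasses the upper-semicontinuity obstacle you flagged without needing to ``propagate through boundary faces''; what it does require is positivity of the transition density on $S^\circ$, which is the genuine technical input and is exactly what the cited reference supplies.
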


This justifies the following definition, taken from \cite{MyOwn3}. 

\begin{defn} An $\SRBM^d(R, \mu, A)$ {\it does not hit} $S_I$, or {\it avoids} $S_I$, if for every $z \in S$, we have: 
\begin{equation}
\label{pa}
\MP_z\left(\exists\ t > 0: Z(t) \in S_I\right) = 0.
\end{equation}
An $\SRBM^d(R, \mu, A)$ {\it hits} $S_I$ if for every $z \in S$, we have: 
\begin{equation}
\label{nnpa}
\MP_z\left(\exists\ t > 0: Z(t) \in S_I\right) > 0.
\end{equation}
\end{defn}

\begin{rmk} For every fixed nonempty $I \subseteq \{1, \ldots, d\}$, either~\eqref{pa} or~\eqref{nnpa} holds. In other words, either all the probabilities on the left-hand side of~\eqref{pa} are simultaneously positive for all $z \in S$, or they are all simultaneously equal to zero for all $z \in S$. The property of a.s. avoiding a given edge does not depend on the initial condition, or on the drift vector $\mu$. However, if the probability on the left-hand side~\eqref{pa} is positive, then its exact value does depend on $z$ and $\mu$; see Remark 5 in \cite[Subsection 3.2]{MyOwn3}.
\end{rmk}
%

\begin{defn} An $\SRBM^d(R, \mu, A)$ {\it avoids edges of order $p$},  if it does not hit $S_I$ for any subset $I \subseteq \{1, \ldots, d\}$ with  $p$ elements.  An $\SRBM^d(R, \mu, A)$ {\it avoids the corner} if it does not hit edges of order $p = d$. An $\SRBM^d(R, \mu, A)$ {\it avoids non-smooth parts of the boundary} if it does not hit edges of order $p = 2$. 
\end{defn}

\subsection{Main Results}

Here, we state our main results for an SRBM in the orthant. There are three important theorems. First, we provide a sufficient condition for not hitting the corner, and another sufficient condition for hitting the corner. Taken together, they do not give us a necessary and sufficient condition, because there is a gap between them. In this respect, the results of this paper is different from that of \cite{MyOwn3}, where we gave a necessary and sufficient condition for avoiding non-smooth parts of the boundary. 

A remaining question is about hitting or avoiding a given edge $S_I$ of the boundary $\pa S$. We provide another theorem which reduces it to  the question of not hitting the corner. This gives us a sufficient condition for not hitting the given edge of $\pa S$. 
The last of these three main results is a sufficient condition for hitting a given edge of 
$\pa S$.

For a strictly copositive $d\times d$-matrix $Q = (q_{ij})_{1 \le i, j \le d}$ and consider the following constants:
$$
c_+(Q) := \max\limits_{x \in S\setminus\{0\}}\frac{x'QAQx}{x'Qx},\qquad
c_-(Q) := \min\limits_{x \in S\setminus\{0\}}\frac{x'QAQx}{x'Qx}.
$$

\begin{lemma} For a positive definite matrix $A$, the numbers $c_{\pm}(Q)$ are well defined and 
strictly positive. 
\label{constants}
\end{lemma}

The (rather straightforward) proof is postponed until the Appendix. 
The following theorem is our main result about an SRBM hitting the corner. 

\begin{thm}
\label{cornerthm}
Suppose $R$ is a completely-$\CS$ reflection matrix, and $A$ is a positive definite symmetric matrix. Take a strictly copositive nonsingular $d\times d$-matrix $Q$.

(i) If the following conditions are true:
\begin{equation}
\label{Ncorner}
\tr\left(QA\right) \ge 2c_+(Q),\ \ \mbox{and}\ \ (QR)_{ij} \ge 0\ \ \mbox{for}\ \ i \ne j,
\end{equation}
then the $\SRBM^d(R, \mu, A)$ does not hit the corner.

(ii) If the following conditions are true:
\begin{equation}
\label{Ycorner}
0 \le \tr\left(QA\right) < 2c_-(Q),\ \ \mbox{and}\ \ (QR)_{ij} \le 0\ \ \mbox{for}\ \ i \ne j,
\end{equation}
then the $\SRBM^d(R, \mu, A)$ hits the corner. 
\end{thm}

An important example of such matrix $M$ is given in the next corollary.

\begin{cor} Suppose the matrix $R$ is a reflection nonsingular $\CM$-matrix for which there exists a diagonal matrix $C = \diag(c_1, \ldots, c_d)$ with $c_1, \ldots, c_d > 0$, such that $\ol{R} = RC$ is symmetric.  

(i) If the following condition is true:
\begin{equation}
\label{Ncorner-om}
\tr\left(\om A\right) \ge 2c_+(\om),
\end{equation}
then the $\SRBM^d(R, \mu, A)$ does not hit the corner.

(ii) If the following conditions are true:
\begin{equation}
\label{Ycorner-om}
0 \le \tr\left(\om A\right) < 2c_-(\om),
\end{equation}
then the $\SRBM^d(R, \mu, A)$ hits the corner. 
\label{cor:inv-R}
\end{cor}

Theorem~\ref{cornerthm} applies also to completely-$\CS$ reflection matrices which are not reflection nonsingular $\CM$-matrices (that is, they contain positive off-diagonal elements). The following is a useful corollary, which can be viewed as a generalization of 
Corollary~\ref{cor:inv-R}, because for a reflection nonsingular $\CM$-matrix $R$ we have: $\tilde{R} = R$. 

\begin{cor}
Take a completely-$\CS$ reflection matrix $R = (r_{ij})_{1 \le i, j \le d}$. Consider the matrix $\tilde{R} = (\tilde{r}_{ij})_{1 \le i, j \le d}$, defined as
$$
\tilde{r}_{ij} = 
\begin{cases}
1,\ i = j;\\
r_{ij},\ r_{ij} \le 0;\\
0,\ r_{ij} > 0.
\end{cases}
$$
Assume $\tilde{R}$ is a reflection nonsingular-$\CM$ matrix. Also, suppose there exists a diagonal matrix $C = \diag(c_1, \ldots, c_d)$ with $c_1, \ldots, c_d > 0$, such that $\ol{R} = \tilde{R}C$ is symmetric.  If the following condition is true:
\begin{equation}
\label{Ncorner-om-notM}
\tr\left(\om A\right) \ge 2c_+(\om),
\end{equation}
then the $\SRBM^d(R, \mu, A)$ does not hit the corner.
\end{cor}

\begin{proof} Just take $Q = \tilde{R}^{-1}$ and apply Theorem~\ref{cornerthm}.
We need only to prove that $(QR)_{ij} \ge 0$ for $i \ne j$. But all elements of $Q$ are nonnegative, and so 
$$
(QR)_{ij} = \SL_{k=1}^dq_{ik}r_{kj} \ge \SL_{k=1}^dq_{ik}\tilde{r}_{kj} = (Q\tilde{R})_{ij} = 0.
$$
\end{proof}

\begin{exm} Let $d = 2$, and
$$
R = 
\begin{bmatrix}
1 & 2\\
3 & 1
\end{bmatrix},
\ \ 
A = I_2 = 
\begin{bmatrix}
1 & 0\\
0 & 1
\end{bmatrix}
$$
Then $\tilde{R} = I_2$ and $\om = I_2$. Therefore, 
$$
c_+(\om) =  \max\limits_{x \in S\setminus\{0\}}\frac{x'I_2x}{x'I_2x} = 1,
\ \ \mbox{and}\ \ \tr\left(\om A\right) = \tr(I_2) = 2.
$$
Thus, condition~\eqref{Ncorner-om-notM} holds, and the $\SRBM^2(R, \mu, A)$ does not hit the corner.
\end{exm}

Sometimes the numbers $c_{\pm}(Q)$ are difficult to calculate. Let us give useful estimates of $c_+(Q)$ from above, and of $c_-(Q)$ from below. 

\begin{lemma} If the matrix $Q = (q_{ij})_{1 \le i, j \le d}$ satisfies $q_{ij} > 0$ for all $i, j = 1, \ldots, d$, then 
$$
c_+(Q) \le \ol{c}_+(Q) := \max\limits_{1 \le i \le j \le d}\frac{(QAQ)_{ij}}{q_{ij}},
\ \ 
c_-(Q) \ge \ol{c}_-(Q) := \min\limits_{1 \le i \le j \le d}\frac{(QAQ)_{ij}}{q_{ij}}.
$$
\label{simple}
\end{lemma}

The next theorem establishes a connection between not hitting the corner and not hitting an edge. It is similar to results from \cite{IKS2013}, and we took the proof technique from \cite{IKS2013}. 

\begin{thm}
\label{corner2edge} Take a completely-$\CS$ reflection matrix $R$. Consider an $\SRBM^d(R, \mu, A)$. Fix a nonempty subset $J \subseteq \{1, \ldots, d\}$. Suppose   the process 
$$
\SRBM^{|I|}([R]_I, [\mu]_I, [A]_I)\ \ \mbox{for each}\ \ J \subseteq I \subseteq \{1, \ldots, d\}
$$
does not hit the corner. Then an $\SRBM^d(R, \mu, A)$ does not hit the edge $S_J$. 
\end{thm}

The last of our main results about SRBM links hitting corners to hitting edges. Note that in this case, the condition that $R$ is a reflection nonsingular $\CM$-matrix, rather than just a completely-$\CS$ matrix, is crucial; the reader can see that the proof heavily uses comparison techniques from \cite{MyOwn2}, which, in turn, apply the condition that $R$ is a reflection nonsingular $\CM$-matrix. 

\begin{thm} 
\label{corner2edge2}
Consider an $\SRBM^d(R, \mu, A)$ with a reflection nonsingular $\CM$-matrix $R$. Fix a nonempty subset $I \subseteq \{1, \ldots, d\}$. Suppose an $\SRBM^{|I|}([R]_I, [\mu]_I, [A]_I)$ hits the corner. Then an $\SRBM^d(R, \mu, A)$ hits the edge $S_I$. 
\end{thm}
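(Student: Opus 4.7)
The plan is to couple $Z = \SRBM^d(R,\mu,A)$ with an auxiliary process $\hat Z := \SRBM^{|I|}([R]_I, [\mu]_I, [A]_I)$ on a common probability space so that the projection $[Z]_I$ is dominated componentwise by $\hat Z$. Once this comparison is in hand, the conclusion is immediate: by hypothesis $\hat Z$ hits the corner $\{0\}\subseteq \BR_+^{|I|}$ with positive probability, so at some $t>0$ one has $0 \le [Z]_I(t) \le \hat Z(t) = 0$, and hence $Z(t)\in S_I$. By Proposition~\ref{432} the hitting probabilities are insensitive to the initial condition and drift, so we are free to start both processes at a common point.

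To set up the coupling, decompose $Z(t) = B(t) + RL(t)$ with $B$ the driving Brownian motion and $L$ the vector of regulating processes, and restrict to coordinates in $I$:
\[
[Z]_I(t) \;=\; [B]_I(t) + [R]_I\,[L]_I(t) \;-\; V(t), \qquad V(t) \;:=\; -\SL_{j\notin I}[R_{\cdot j}]_I\,L_j(t),
\]
where $R_{\cdot j}$ denotes the $j$th column of $R$. Since $R$ is a reflection nonsingular $\CM$-matrix, each entry $r_{ij}$ with $i\in I$ and $j\notin I$ satisfies $r_{ij}\le 0$, so $[R_{\cdot j}]_I\le 0$; combined with the monotonicity of each $L_j$, this forces $V$ to be componentwise nondecreasing with $V(0)=0$. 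Moreover $[B]_I$ is a BM with drift $[\mu]_I$ and covariance $[A]_I$, the principal submatrix $[R]_I$ inherits the reflection nonsingular $\CM$-matrix property (a standard fact for $\CM$-matrices), and because $L_i$ can only increase when $Z_i=0$ for each $i\in I$, each component of $[L]_I$ can increase only when the corresponding component of $[Z]_I$ is zero. Hence $[Z]_I$ solves a Skorokhod-type problem in $\BR_+^{|I|}$ with reflection matrix $[R]_I$ and driving path $[B]_I - V$. I would then construct $\hat Z$ on the same probability space by solving the Skorokhod problem with the same reflection matrix $[R]_I$ but driving path $[B]_I$ (no perturbation); by Proposition~\ref{existence} this yields $\hat Z = [B]_I + [R]_I\hat L = \SRBM^{|I|}([R]_I,[\mu]_I,[A]_I)$.

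The heart of the argument is the pathwise comparison $[Z]_I(t) \le \hat Z(t)$ for all $t\ge 0$. This is the familiar monotonicity of the Skorokhod reflection map on the orthant for reflection nonsingular $\CM$-matrices: subtracting a componentwise nondecreasing path from the driver produces a pointwise smaller reflected solution. I would derive it from the Harrison--Reiman successive-approximation construction of $\hat L$ in \cite{HR1981a}, noting that at each iterate the inequality is preserved because $I_{|I|} - [R]_I$ has nonnegative entries, so the inequality survives in the limit. I expect this monotonicity step, rather than the algebraic decomposition, to be the main technical point; once it is in place, the theorem follows at once from the opening paragraph.
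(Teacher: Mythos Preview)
Your proposal is correct and follows the same approach as the paper: both establish the result via stochastic comparison of $[Z]_I$ with the lower-dimensional $\SRBM^{|I|}([R]_I,[\mu]_I,[A]_I)$. The paper simply cites the pathwise comparison (Proposition~\ref{propcomp}, drawn from \cite{MyOwn2} and the monotonicity results of \cite{R2000}, \cite{KW1996}, \cite{KR2012b}, \cite{Haddad2010}) and declares the theorem an immediate consequence, whereas you spell out the coupling explicitly via the decomposition $[Z]_I = [B]_I + [R]_I[L]_I - V$ with $V$ nondecreasing and then invoke monotonicity of the Harrison--Reiman Skorokhod map; this is exactly the argument behind the cited proposition.
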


The rest of the section will be devoted to the proofs of Theorems~\ref{cornerthm}, ~\ref{corner2edge} and~\ref{corner2edge2}. 

\subsection{Proof of Theorem~\ref{cornerthm}} First, we present an informal overview of the proof, and then   give a complete proof. 

\subsubsection{Outline of the proof.} First, we show (i). Let $Z = (Z(t), t \ge 0)$ be an $\SRBM^d(R, \mu, A)$, starting from $z \in S$. By Proposition~\ref{432}, we can assume $z \in S\setminus\pa S$, and  $\mu = 0$. Consider the function
\begin{equation}
\label{101}
F(x) := x'Qx.
\end{equation}
The matrix $Q$ is strictly copositive. Therefore, if $F(x) = 0$ for a certain $x \in S$, then $x = 0$. Therefore, the process $Z$ hits the corner if and only if the process $F(Z(\cdot))$ hits zero. Let $L = (L(t), t \ge 0)$ be the vector of regulating processes for $Z$, and let $B = (B(t), t \ge 0)$ be the driving Brownian motion for $Z$, so that we have:
\begin{equation}
\label{6112}
Z(t) = B(t) + RL(t),\ t \ge 0.
\end{equation}
If we write an equation for $F(Z(\cdot))$ using the It\^o-Tanaka formula, we get:
$$
\md F(Z(t)) = \be(t)\md t + \md M(t) + \md l(t),
$$
where $l = (l(t), t \ge 0)$ is a {\it nondecreasing} process, and $M = (M(t), t \ge 0)$ is a local martingale. Since we wish to prove that $F(Z(t)) > 0$ for all $t > 0$, we can eliminate the term $l$. In other words, it suffices to prove this property of staying positive for the process $U = (U(t), t \ge 0)$ given by
$$
\md U(t) = \be(t)\md t + \md M(t).
$$
It turns out that the drift coefficient $\be$ is constant. The local martingale $M = (M(t), t \ge 0)$ can be represented as $\md M(t) = \rho(t)\md W(t)$, where $W = (W(t), t \ge 0)$ is a standard Brownian motion. The diffusion coefficient $\rho(t)$ is comparable with that in the SDE for Bessel squared process. After an appropriate random time-change, we can make the diffusion coefficient exactly equal to the one for a Bessel squared process. However, this will not turn our process into a Bessel squared process. Indeed, the drift coefficient for the new process will not be constant (and for a Bessel squared process, it is constant). Still, we can bound this drift coefficient from below by $2$. But we know that a Bessel squared process hits zero if and only if its index is less than two. Therefore, our time-changed process (together with the original process $F(Z(\cdot))$) does not hit zero. This, in turn, means that the process $Z$ does not hit the origin. 
The proof of (ii) is similar, only there the process $l = (l(t), t \ge 0)$ is nonincreasing, and we bound the new drift coefficient from above by something strictly less than $2$. Together, this means that  the process $F(Z(\cdot))$ does indeed hit zero, and the process $Z$ hits the origin.

\subsubsection{Complete proof} 

We split the proof into several lemmata.

\begin{lemma}
The process $F(Z(\cdot))$ can be represented as 
\begin{equation}
\label{eq:rep-F-Z}
\md F(Z(t)) = \rho(t)\md\ol{W}(t) + \tr\bigl(QA\bigr)\md t + l(t),
\end{equation}
where $\ol{W} = (\ol{W}(t), t \ge 0)$ is a standard Brownian motion, 
\begin{equation}
\label{eq:rep-q}
\rho(t) := \left(Z'(t)QAQZ(t)\right)^{1/2},\ \ t \ge 0.
\end{equation}
and $l = (l(t), t \ge 0)$ is a continuous nondecreasing process with $l(0) = 0$. 
\label{lemma:rep-F-Z}
\end{lemma}

The proof of this lemma involves little more than applying It\^o-Tanaka's formula and some computations. It is postponed until the end of this subsection. Assuming we established this lemma, let us complete the proof. 

Define
$$
\tau := \inf\{t \ge 0\mid Z(t) = 0\} = \inf\{t \ge 0\mid F(Z(t)) = 0\}.
$$
Then $\tau > 0$ a.s., because $Z(0) = x > 0$. For $s < \tau$, we have  $Z(s) \in S\setminus\{0\}$, and $F(Z(s)) > 0$. It follows from the definition of constants $c_{\pm}$ that
$$
\frac12c_-^{1/2} \le \frac{\rho(s)}{2F^{1/2}(Z(s))}  = \frac12\left(\frac{Z'(s)QAQZ(s)}{Z'(s)QZ(s)}\right)^{1/2} \le \frac12c_+^{1/2}.
$$
Make the following time change:
$$
\Delta(t) := \int_0^t\frac{\rho^2(s)}{4F(Z(s))}\md s,\ \ t \le \tau.
$$
By \cite[Lemma 2]{MyOwn1}, this is a strictly increasing function on $[0, \tau]$ with $\De(0) = 0$. Denote $\tau_0 := \De(\tau)$. Define the inverse of $\De$ by 
$$
\chi(s) := \inf\{t \ge 0\mid \De(t) \ge s\}.
$$

\begin{lemma} The time-changed process 
$$
V = (V(s), s < \tau_0),\ \ \mbox{defined by}\ \ V(s) \equiv F(Z(\chi(s))), 
$$
satisfies the following equation:
\begin{equation}
\label{eq:process-V}
\md  F(Z(\chi(s))) = \be(s)\md s + 2V^{1/2}(s)\md W(s) + \ol{l}(s),
\end{equation}
where $\ol{l}(s) := l(\chi(s))$ is a nondecreasing process, $\be = (\be(s), s < \tau_0)$ is a certain drift coefficient satisfying $\be(s) \ge \be \ge 2$ for all $s \in [0, \tau_0)$, and $W = (W(s), s \ge 0)$ is a standard Brownian motion. 
\end{lemma}

 \begin{proof}
By \cite[Lemma 2]{MyOwn1}, the process $V = (V(s), s \ge 0)$ satisfies the following equation:
$$
\md V(s) = \tr\bigl(QA\bigr)\frac{V(s)}{\rho^2(\chi(s))}\md s + 2V^{1/2}(s)\md W(s) + l(\chi(s)).
$$
Here, $W = (W(t), t \ge 0)$ is yet another standard Brownian motion.  
Note that 
$$
\frac14c_- \le \De'(s) = \frac{\rho^2(s)}{4F(Z(s))} = \frac{Z'(s)QAQZ(s)}{4Z'(s)QZ(s)} \le \frac14c_+.
$$
Therefore, the mapping $\De : [0, \tau) \to [0, \tau_0)$ is one-to-one, and $\tau = \infty$ if and only if $\tau_0 = \infty$. 
Note that
$$
\frac{V(s)}{\rho^2(\chi(s))} \ge c_+^{-1},
$$
and $\tr(QA) \ge 2c_+ \ge 0$. Therefore,
$$
\tr\bigl(QA\bigr)\frac{V(s)}{\rho^2(\chi(s))} \ge \tr\bigl(QA\bigr)c_+^{-1} =: \be \ge 2.
$$
This completes the proof. 
\end{proof}

Now, note that we have: 
$$
\MP\left(\exists t > 0: F(Z(t)) = 0\right) = 0\ \ \mbox{if and only if}\ \ \MP\left(\exists s > 0: V(s) = 0\right) = 0. 
$$
Suppose the condition~\eqref{Ncorner} holds. We need to prove that the process $Z$ does not hit the corner. Assume the converse. Then $\MP(\tau < \infty) > 0$, and
$\MP(\tau_0 < \infty) > 0$. On the event $\{\tau_0 < \infty\}$, we have: $V(\tau_0) = 0$. Consider the squared Bessel process $\ol{V} = (\ol{V}(s), s \ge 0)$, given by the equation
$$
\md\ol{V}(s) = 2\ol{V}^{1/2}(s)\md W(s) + \be \md s,\ \ \ol{V}(0) = V(0).
$$
Since $\be \ge 2$, it is known (see, e.g., \cite[Section 11.1, p. 442]{RevuzYorBook}) that $\ol{V}$ a.s. does not hit $0$. It follows from Lemma~\ref{lemma:aux-comparison} in the Appendix that $V(s) \ge \ol{V}(s)$ a.s. for $s < \tau_0$. If $\tau_0 < \infty$, then by continuity $V(\tau_0) \ge \ol{V}(\tau_0) > 0$, but $V(\tau_0) = 0$. This contradiction completes the proof of (i). The proof of (ii) is similar.

\medskip
\noindent
{\it Proof of Lemma~\ref{lemma:rep-F-Z}.} Recall the definition of function $F$ from~\eqref{101}. Since the matrix $Q$ is strictly copositive, we have: $F(x) > 0$ for $x \in S\setminus\{0\}$. Since the matrix $Q$ is symmetric, the first and second order derivatives of the function $F$ are
$$
\frac{\pa F}{\pa x_i} = \bigl(2Qx\bigr)_i = 2\SL_{k=1}^dq_{ik}x_k,\ \ \frac{\pa^2F}{\pa x_i\pa x_j} = 2q_{ij},\ \ i, j = 1, \ldots, d.
$$
Note that $\langle Z_i, Z_j\rangle_t = \langle B_i, B_j\rangle_t = a_{ij}t$. 
By the It\^o-Tanaka formula applied to the process $Z$ from~\eqref{6112} and the function $F$ from~\eqref{101}, we have:
\begin{align*}
\md F(Z(t)) =& \SL_{i=1}^d\frac{\pa F}{\pa x_i}(Z(t))\md Z_i(t) + 
\frac12\SL_{i=1}^d\SL_{j=1}^d\frac{\pa^2F}{\pa x_i\pa x_j}(Z(t))\md\langle Z_i, Z_j\rangle_t \\ &=\SL_{i=1}^d\bigl(2QZ(t)\bigr)_i\md B_i(t) + 
\SL_{i=1}^d\SL_{k=1}^d\bigl(2QZ(t)\bigr)_ir_{ik}\md L_k(t)  + \SL_{i=1}^d\SL_{j=1}^dq_{ij}a_{ij}\md t\\ &=
2\SL_{i=1}^d\SL_{j=1}^dq_{ij}Z_j(t)\md B_i(t) + 
2\SL_{i=1}^d\SL_{j=1}^d\SL_{k=1}^dq_{ij}Z_j(t)r_{ik}\md L_k(t) + \tr\bigl(QA\bigr)\md t.
\end{align*}
Let us show that the following process is nondecreasing: 
$$
l(t) := \int_0^t2\SL_{i=1}^d\SL_{j=1}^d\SL_{k=1}^dq_{ij}Z_j(s)r_{ik}\md L_k(s)\md s. 
$$
Indeed,
\begin{align*}
\SL_{i=1}^d\SL_{j=1}^d\SL_{k=1}^d&q_{ij}Z_j(t)r_{ik}\md L_k(t) =
\SL_{i=1}^d\SL_{j=1}^d\SL_{k=1}^d\rho_{ji}Z_j(t)r_{ik}\md L_k(t) \\ & = \SL_{j=1}^d\SL_{k=1}^d\bigl(QR\bigr)_{jk}Z_j(t)\md L_k(t) \\ & = 
 \SL_{j=1}^d\bigl(QR\bigr)_{jj}Z_j(t)\md L_j(t) +  \SL_{k \ne j}\bigl(QR\bigr)_{jk}Z_j(t)\md L_k(t).
\end{align*}
For each $j = 1, \ldots, d$, the regulating process $L_j$ can grow only if $Z_j = 0$: we express this by writing $Z_j(t)\md L_j(t) = 0$. And for $k \ne j$, we have: $(QR)_{jk} \ge 0$ by assumptions of Theorem~\ref{cornerthm}, and $Z_j(t) \ge 0, \md L_k(t) \ge 0$ by definition. Therefore, the process $l$ is nondecreasing. 

Now, recall that $B_1, \ldots, B_d$ are driftless one-dimensional Brownian motions (they are driftless, because the drift $\mu = 0$, according to our assumptions). Therefore, the following process is a continuous local martingale:
$$
M = (M(t), t \ge 0),\ \ M(t) := 2\SL_{i=1}^d\SL_{j=1}^d\int_0^t\rho_{ij}Z_j(s)\md B_i(s).
$$
We can represent the process $F(Z(\cdot))$ as follows:
$$
\md F(Z(t)) = \md M(t) + \tr\bigl(QA\bigr)\md t + \md l(t).
$$
Let us calculate the quadratic variation of $M$. Recall that, by definition of the process $B$, $\langle B_i, B_j\rangle_t = a_{ij}t$. Let
$$
M_{ij}(t) = \int_0^t\int_0^tZ_j(s)q_{ij}\md B_i(s),\ \ i, j = 1, \ldots, d.
$$
For $i, j, k, l = 1, \ldots, d$, we have:
$$
\langle M_{ij}, M_{kl}\rangle_t = \int_0^tZ_j(s)q_{ij}Z_l(s)q_{kl}a_{ik}\md s.
$$
But the quadratic variation of $M = \sum_{i=1}^d\sum_{j=1}^dM_{ij}$ is equal to the sum
\begin{align*}
\langle M\rangle_t = & \SL_{i=1}^d\SL_{j=1}^d\SL_{k=1}^d\SL_{l=1}^d\langle M_{ij}, M_{kl}\rangle_t = 
\SL_{i=1}^d\SL_{j=1}^d\SL_{k=1}^d\SL_{l=1}^d\int_0^t
Z_j(s)q_{ij}Z_l(s)q_{kl}a_{ik}\md s \\ &= 
\SL_{i=1}^d\SL_{j=1}^d\SL_{k=1}^d\SL_{l=1}^d\int_0^t
Z_j(s)q_{ij}a_{ik}q_{kl}Z_l(s)\md s = 
\int_0^t\left(Z'(s)QAQZ(s)\right)\md s.
\end{align*}
Then we can represent $M$ as the stochastic integral 
$$
M(t) = 2\int_0^t\rho(s)\md \ol{W}(s),
$$
where $\ol{W} = (\ol{W}(t), t \ge 0)$ is a standard Brownian motion. This completes the proof of Lemma~\ref{lemma:rep-F-Z}. 

\subsection{Proof of Theorem~\ref{corner2edge}}

We prove this theorem using induction by $d$. The induction base is trivial. Induction step: assume that the statement is true for $d-1$ instead of $d$, and try to prove it for 
$d$. For $\eps \in (0, 1)$, let $K_{\eps} = \{x \in S\mid \eps \le \norm{x} \le \eps^{-1}\}$. Fix a point $z \in S\setminus\{0\}$, so that $z \in K_{\eps}$ for all $\eps > 0$ small enough. Start a copy of an $\SRBM^d(R, \mu, A)$ from $z$ (we can assume this by Proposition~\ref{432}). Denote this copy by $Z = (Z(t), t \ge 0)$, and let $B = (B(t), t \ge 0)$ be its driving Brownian motion. 
Let 
$$
\tau := \inf\{t \ge 0\mid Z(t) \in S_I\}
$$
be the first moment when the process $Z$ hits the edge $S_I$. We need to show that $\tau = \infty$ a.s. Let 
$$
\eta_{\eps} := \inf\{t \ge 0\mid Z(t) \in K_{\eps}\}.
$$
Note that $\eta_{\eps} \le \eta_{\eps'}$ when $\eps' \le \eps$, and $\lim_{\eps \downarrow 0}\eta_{\eps} = \infty$, because by assumptions of the theorem the process $Z$ does not hit the corner: $Z(t) \ne 0$ for all $t \ge 0$ a.s. It suffices to show that $\tau \ge \eta_{\eps}$ for all $\eps \in (0, 1)$. Fix an $\eps \in (0, 1)$. For every $x \in K_{\eps}$, there exists an open neighborhood $U(x)$ of $x$ with the following property: there exists some index $i = i(x) \in \{1, \ldots, d\}$ such that for all $y \in U(x)$ we have: $y_{i(x)} > 0$. Since $K_{\eps}$ is compact, we can extract a finite subcover $U(x_1), \ldots, U(x_s)$. Without loss of generality, let us include the neighborhood $U(x_0)$ of $x_0 = z$ into this subcover. Now, define a sequence of stopping times:
$$
\tau_0 := 0,\ \ j_0 := 0;\ \ 
\tau_{k+1} := \inf\{t \ge \tau_k\mid Z(t) \notin U\left(x_{j_k}\right)\},
$$
and $j_{k+1}$ is defined as any $j = 0, \ldots, s$ such that $Z\left(\tau_{k+1}\right) \in U(x_j)$. Suppose that, at some point, we cannot find such $j$; in other words, 
$$
Z(\tau_{k+1}) \notin U\left(x_{j_0}\right)\cup U\left(x_{j_1}\right)\cup\ldots\cup U\left(x_{j_s}\right).
$$
Then the sequence of stopping times terminates, and we denote $K := k+1$. In this case, we have defined $\tau_0, j_0, \tau_1, j_1, \ldots, \tau_{K-1}, j_{K-1}, \tau_K$. If the sequence does not terminate, we let $K = \infty$. We have:
$$
Z_{j_k}(t) > 0\ \ \mbox{for}\ \ t \in [\tau_k, \tau_{k+1}),\ \ k < K.
$$
The sequence $(\tau_{k})$ can be either finite or countable. Recall that $U(x_j),\ j = 0, \ldots, s$ is a cover of $K_{\eps}$. Therefore, $\sup_{k}\tau_k \ge \eta_{\eps}$. It suffices to show that $\tau \ge \tau_{k}$. We prove this using induction by $k$. 

\medskip

Induction base: $k = 1$. If $j_0 \in I$, then $Z_{j_0}(t) > 0$ for $t < \tau_1$, and $Z(t) \notin S_I$. In this case, $\tau \ge \tau_1$ is straightforward. Now, if $j_0 \notin I$, then consider the set $J := \{1, \ldots, d\}\setminus\{j_0\}$. We have the following representation:
$$
\left([Z(t\wedge\tau_1)]_J,\ t \ge 0\right) = (\ol{Z}(t\wedge\tau_1),\ t \ge 0),
$$
where $\ol{Z} = (\ol{Z}(t), t \ge 0)$ is an $\SRBM^{d-1}([R]_J, [\mu]_J, [A]_J)$, starting from $[z]_J$, with the driving Brownian motion $[B]_J = ([B(t)]_J, t \ge 0)$. This process $\ol{Z}$ is well defined, since the matrix $[R]_J$ is a reflection nonsingular $\CM$-matrix, and by Proposition~\ref{existence} there exists a strong version of $\ol{Z}$. By the induction hypothesis, a.s. there does not exist $t \ge 0$ such that $\ol{Z}(t) \in S_I$. For every $y \in S$, we have: $y \in S_I$ if and only if $[y]_J \in S_I$. Therefore, for all $t < \tau_1$ we have: $Z(t) \notin S_I$. This proves that $\tau \ge \tau_1$. 

\medskip

Induction step: suppose $t \ge \tau_k$ and $k < K$, that is, the sequence does not terminate at this step. Then we need to prove $\tau \ge \tau_{k+1}$. Consider the process $(Z(t + \tau_k), t \ge 0)$. This is a version of an $\SRBM^d(R, \mu, A)$, started from $Z(\tau_k)$. But 
$$
Z(\tau_{k}) \in U\left(x_{j_0}\right)\cup U\left(x_{j_1}\right)\cup\ldots\cup U\left(x_{j_s}\right).
$$
There exists $j = 0, \ldots, s$ such that $Z(\tau_k) \in U(x_j)$. In addition, $Z(\tau_k) \in S\setminus\{0\}$, because by induction hypothesis, the process $Z$ never hits the corner. Apply the reasoning from the induction base to this process instead of the original SRBM. The moment $\tau_{k+1} - \tau_k$ plays the role of $\tau_1$ above, and the moment $\tau - \tau_k$ plays the role of $\tau$. Therefore, $\tau - \tau_k \ge \tau_{k+1} - \tau_k$, and $\tau \ge \tau_{k+1}$. This completes the proof.

\subsection{Proof of Theorem~\ref{corner2edge2}}

This theorem is proved using {\it stochastic comparison}. 

\begin{defn} Consider two $\BR^d$-valued processes $Z = (Z(t), t \ge 0)$ and $\ol{Z} = (\ol{Z}(t), t \ge 0)$. We say that $Z$ is {\it stochastically dominated by} $\ol{Z}$, and write it as $Z \preceq \ol{Z}$, if for every $t \ge 0$ and $y \in \BR^d$ we have: 
$$
\MP(Z(t) \ge y) \le \MP(\ol{Z}(t) \ge y).
$$
\end{defn}

\begin{prop}
Take a $d\times d$ reflection nonsingular $\CM$-matrix $R$, a $d\times d$ positive definite symmetric matrix $A$, and a drift vector $\mu \in \BR^d$. Fix a nonempty subset $I \subseteq \{1, \ldots, d\}$. Let 
$$
Z = \SRBM^d(R, \mu, A),\ \ \ol{Z} = \SRBM^{|I|}([R]_I, [\mu]_I, [A]_I)
$$
such that $[Z(0)]_I$ has the same law as $\ol{Z}(0)$. Then $[Z]_I \preceq \ol{Z}$. 
\label{propcomp}
\end{prop}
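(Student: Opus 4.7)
The plan is to construct a pathwise coupling on a common probability space. The key observation is that, for $\CM$-matrix reflection, the Skorohod map on the orthant is monotone in its input, and extra non-positive perturbations of the input produce a smaller output. So I would like to write $[Z]_I$ as the solution of an $|I|$-dimensional Skorohod problem with reflection matrix $[R]_I$ whose input is dominated by the input that produces $\ol Z$.

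Let $B$ be the driving Brownian motion and $L = (L_1,\ldots,L_d)'$ the vector of regulating processes of $Z$, so $Z(t) = Z(0) + B(t) - B(0) + \mu t + RL(t)$. For $k \in I$, splitting the sum over $i \in I$ and $i \notin I$ gives
\[
(RL(t))_k = \SL_{i \in I} r_{ki} L_i(t) + \SL_{i \notin I} r_{ki} L_i(t) = \bigl([R]_I \, [L(t)]_I\bigr)_k + N_k(t),
\]
where $N_k(t) := \SL_{i \notin I} r_{ki} L_i(t)$. Because $R$ is a reflection nonsingular $\CM$-matrix, $r_{ki} \le 0$ for every $k \in I$ and $i \notin I$, so each $N_k$ is non-positive and non-increasing. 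Collecting terms,
\[
[Z(t)]_I = \tilde X(t) + [R]_I \, [L(t)]_I, \qquad \tilde X(t) := [Z(0)]_I + [B(t)-B(0)]_I + [\mu]_I t + N(t).
\]
I would then verify that $([Z]_I, [L]_I)$ solves the Skorohod problem in $\BR^{|I|}_+$ driven by $\tilde X$ and reflection matrix $[R]_I$: $[L]_I$ is nondecreasing with $[L(0)]_I = 0$, and for $k \in I$ the component $L_k$ only grows when $Z_k = 0$, which is equivalent to $([Z]_I)_k = 0$. Existence/uniqueness for the Skorohod map in the orthant with $\CM$-matrix reflection (the setting in which Proposition~\ref{existence} is stated) then identifies $[Z]_I$ with $\Gamma_{[R]_I}(\tilde X)$, where $\Gamma_{[R]_I}$ denotes the corresponding Skorohod map.

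Next, I couple $\ol Z$ to the same probability space: choose $\ol Z(0) = [Z(0)]_I$ a.s.\ (permissible since the two have the same law), and drive $\ol Z$ by $[B]_I$, which is a Brownian motion with drift $[\mu]_I$ and covariance $[A]_I$. Then $\ol Z = \Gamma_{[R]_I}\bigl(\ol Z(0) + [B(\cdot)-B(0)]_I + [\mu]_I \,\cdot\bigr)$. Comparing the two inputs,
\[
\tilde X(t) - \bigl(\ol Z(0) + [B(t)-B(0)]_I + [\mu]_I t\bigr) = N(t) \le 0 \ \ \text{componentwise, for all}\ t \ge 0.
\]
The remaining step is the \emph{monotonicity of the Skorohod map} on the orthant for reflection nonsingular $\CM$-matrices: if $X^1(t) \le X^2(t)$ for all $t \ge 0$ componentwise, then $\Gamma_{[R]_I}(X^1)(t) \le \Gamma_{[R]_I}(X^2)(t)$ componentwise for all $t \ge 0$. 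Granting this, one immediately obtains $[Z(t)]_I \le \ol Z(t)$ a.s.\ for every $t \ge 0$, which upgrades to the stochastic domination $[Z]_I \preceq \ol Z$.

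The main obstacle is the monotonicity statement itself. It is a standard but nontrivial result: it can be proved, for instance, through a Picard/fixed-point construction of $\Gamma$ for $\CM$-matrix reflection (as in \cite{HR1981a}, \cite{Chen1996}), where at each iteration the map preserves the componentwise order on inputs, and the limit inherits this property; alternatively one can invoke the explicit representation of $\Gamma$ as the minimal nondecreasing process keeping the output in $S$. A short self-contained argument reduces to showing that if $\tilde X \le \hat X$ with the \emph{same} reflection matrix, then the difference $\Gamma_{[R]_I}(\hat X) - \Gamma_{[R]_I}(\tilde X)$ stays componentwise nonnegative, which in turn follows from the $\CM$-matrix property by examining the first time a coordinate of this difference could become negative and using that an increase of the corresponding regulating process can only push that coordinate up. Once this comparison lemma is in hand, the proof of Proposition~\ref{propcomp} is complete.
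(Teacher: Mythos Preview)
Your approach is essentially the content of the references the paper cites (the paper itself gives no proof, only invokes \cite[Corollary 3.6]{MyOwn2} and the comparison results of \cite{R2000}, \cite{KR2012b}, \cite{Haddad2010}, \cite{KW1996}): represent $[Z]_I$ as $\Gamma_{[R]_I}$ applied to an input that equals the input for $\ol Z$ plus a nonpositive, nonincreasing perturbation $N$, and then appeal to a comparison property of the Skorohod map for $\CM$-matrix reflection. So the strategy matches the paper's.

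One point does need correcting. The monotonicity statement you wrote, ``if $X^1(t)\le X^2(t)$ for all $t$ then $\Gamma_{[R]_I}(X^1)(t)\le \Gamma_{[R]_I}(X^2)(t)$,'' is false already in dimension one: take $X^2\equiv 0$ and $X^1$ that dips to $-2$ on $[0,1]$ and returns to $0$ at $t=2$; then $\Gamma(X^1)(2)=2>0=\Gamma(X^2)(2)$. The correct hypothesis in the comparison theorems you need (and the one used in the references the paper cites) is that $X^2-X^1$ is componentwise \emph{nondecreasing}, not merely nonnegative. In your coupling this stronger hypothesis is available, since $X^2-\tilde X=-N$ and you already observed that $N$ is nonincreasing; you just have to state and use it. The ``first time a coordinate becomes negative'' argument you sketch also only goes through with this stronger hypothesis: at that first time the bad coordinate of $\Gamma(X^2)-\Gamma(X^1)$ hits zero, you need $\md(X^2_k-X^1_k)\ge 0$ to prevent the free-motion part from pushing the difference negative. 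With that correction, your plan is complete and coincides with the cited pathwise comparison for $\CM$-matrix Skorohod maps.
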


This result was shown in \cite[Corollary 3.6]{MyOwn2}; it is an easy corollary of general comparison techniques for reflected processes developed in \cite[Theorem 4.1]{R2000}, see also \cite[Theorem 1.1(i)]{KR2012b}, \cite[Theorem 3.1]{Haddad2010}, \cite[Theorem 6(i)]{KW1996}. Now, it is easy to see that Theorem~\ref{corner2edge2} trivially follows from Proposition~\ref{propcomp}. 

\subsection{Corollaries of the main results for an SRBM}

The following corollary of Theorem~\ref{corner2edge} gives a sufficient condition for not hitting edges of a given order. 

\begin{cor} Consider an  $\SRBM^d(R, \mu, A)$. Fix $p = 2, \ldots, d-1$. Suppose for every $I \subseteq \{1, \ldots, d\}$ such that $|I| \ge p$ the process $\SRBM^{|I|}([R]_I, [\mu]_I, [A]_I)$ does not hit the corner. Then an $\SRBM^d(R, \mu, A)$ does not hit edges of order $p$. 
\end{cor}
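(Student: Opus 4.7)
The plan is to reduce this directly to Theorem~\ref{corner2edge}, which is the nontrivial content; the corollary is essentially a bookkeeping step on top of it.

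First I would unfold the definition: to show that $\SRBM^d(R,\mu,A)$ avoids edges of order $p$, I must verify that it does not hit $S_J$ for each subset $J \subseteq \{1,\ldots,d\}$ with $|J| = p$. So I fix an arbitrary such $J$ and aim to apply Theorem~\ref{corner2edge} to this particular $J$.

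Next I would verify the hypothesis of Theorem~\ref{corner2edge} for this $J$. That hypothesis demands that for every $I$ with $J \subseteq I \subseteq \{1,\ldots,d\}$, the lower-dimensional process $\SRBM^{|I|}([R]_I, [\mu]_I, [A]_I)$ does not hit the corner. But any such $I$ automatically satisfies $|I| \ge |J| = p$, so the standing hypothesis of the corollary (which asserts corner-avoidance for \emph{every} subset $I$ with $|I| \ge p$) supplies exactly what is needed. Theorem~\ref{corner2edge} then yields that $\SRBM^d(R,\mu,A)$ does not hit $S_J$. Since $J$ was an arbitrary subset of cardinality $p$, the process avoids edges of order $p$.

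There is really no obstacle here; the only thing worth double-checking is the monotonicity of the hypothesis in set inclusion (larger $I$ still has $|I|\ge p$), which is immediate. All the genuine work has been absorbed into Theorem~\ref{corner2edge}.
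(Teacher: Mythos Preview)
Your proposal is correct and is exactly the intended argument: the paper states this result as an immediate corollary of Theorem~\ref{corner2edge} without giving a separate proof, and your reduction (fix $J$ with $|J|=p$, note every $I\supseteq J$ has $|I|\ge p$, invoke Theorem~\ref{corner2edge}) is precisely the obvious derivation.
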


The next corollary combines the results of Corollary~\ref{cor:inv-R}, Theorem~\ref{corner2edge} and Theorem~\ref{corner2edge2}. Its proof is trivial and is omitted. 

\begin{cor} Take an $\SRBM^d(R, \mu, A)$. Suppose the matrix $R$ is a reflection nonsingular $\CM$-matrix and there exists a diagonal matrix $C = \diag(c_1, \ldots, c_d)$ with $c_1, \ldots, c_d > 0$ such that $RC = \ol{R}$ is symmetric. 
\label{general}

(i) Fix a nonempty subset $J \subseteq \{1, \ldots, d\}$. Suppose that for every subset $I$ such that $J \subseteq I \subseteq \{1, \ldots, d\}$ we have:
\begin{equation}
\label{Nedge}
\tr\left([\oR]_I^{-1}[A]_I\right) \ge 2\!\!\!\!\max\limits_{x \in \BR^{|I|}_+\setminus\{0\}}\frac{x'[\oR]_I^{-1}[A]_I[\oR]_I^{-1}x}{x'[\oR]_I^{-1}x}.
\end{equation}
Then the $\SRBM^d(R, \mu, A)$ avoids $S_I$.

(ii) Fix $p = 1, \ldots, d-1$. Suppose for every subset $I \subseteq \{1, \ldots, d\}$ with $|I| \ge p$ we have:
$$
\tr\left([\oR]_I^{-1}[A]_I\right) \ge 2\!\!\!\!\max\limits_{x \in \BR^{|I|}_+\setminus\{0\}}\frac{x'[\oR]_I^{-1}[A]_I[\oR]_I^{-1}x}{x'[\oR]_I^{-1}x}.
$$
Then the $\SRBM^d(R, \mu, A)$ avoids edges of order $p$. 

(iii) Suppose there exists a subset $I \subseteq \{1, \ldots, d\}$ such that 
$$
\tr\left([\oR]_I^{-1}[A]_I\right) < 2\!\!\!\!\min\limits_{x \in \BR^{|I|}_+\setminus\{0\}}\frac{x'[\oR]_I^{-1}[A]_I[\oR]_I^{-1}x}{x'[\oR]_I^{-1}x}.
$$
Then the $\SRBM^d(R, \mu, A)$ hits $S_I$. 
\end{cor}

\section{Proofs of Theorems~\ref{totalcor}, ~\ref{cams} and~\ref{mainthm}}

\subsection{Outline of the proofs}

Consider a system of competing Brownian particles from Definition~\ref{classicdef}. In Lemma~\ref{red}, we note that a multicollision with pattern $I$ is equivalent to an $\SRBM^{N-1}(R, \mu, A)$ hitting the edge $S_I$ of the $N-1$-dimensional orthant $\BR^{N-1}_+$. Here, the parameters $R$, $\mu$, $A$ are given by~\eqref{R12}, ~\eqref{mu} and~\eqref{A} below. We apply Corollary~\ref{cor:inv-R} and Theorem~\ref{corner2edge} to this SRBM to prove Theorems~\ref{totalcor}  and~\ref{mainthm} respectively.  We use the estimate in Lemma~\ref{simple} for $c_+$, since the right-hand side of~\eqref{Ncorner} seems hard to compute for matrices $R$ and $A$ given by~\eqref{R12} and~\eqref{A}. 

Note that the matrix $R$ from~\eqref{R12} is itself symmetric. Therefore, in Corollary~\ref{cor:inv-R} we can take $C = I_{N-1}$ and $\ol{R} = R$. The inverse matrix $R^{-1} = \ol{R}^{-1} = (\rho_{ij})_{1 \le i, j \le N-1}$ has the form
\begin{equation}
\label{invR}
\rho_{ij} = 
\begin{cases}
2i(N-j)/N,\ i \le j;\\
2j(N-i)/N,\ i \ge j
\end{cases}
\end{equation}
This result can be found in \cite{FP2001, Inversion} (the latter article deals with a slightly different matrix, from which one can easily find the inverse of the given matrix $R$). After a (rather tedious) computation, we rewrite the condition~\eqref{Ncorner-om} from Corollary~\ref{cor:inv-R} as $\CP(\si) \ge 0$, where $\CP(\si)$ is defined in~\eqref{CP}. This proves Theorem~\ref{totalcor}. 

Proving Theorem~\ref{mainthm} is a bit harder. Apply Theorem~\ref{corner2edge}, and fix a subset $I \subseteq \{1, \ldots, N-1\}$ such that $J \subseteq I$. We need to find a sufficient condition for an $\SRBM^{|I|}([R]_I, [\mu]_I, [A]_I)$ to a.s. avoid the corner of the orthant $\BR^{|I|}_+$. We decompose the set $I$ as in~\eqref{decomp}:
$$
I = I_1\cup I_2\cup \ldots I_r,
$$
into a union of disjoint non-adjacent discrete intervals. In Lemma~\ref{cornerstone}, we prove that if $I$ satisfies Assumption (B), then the $\SRBM^{|I|}([R]_I, [\mu]_I, [A]_I)$ indeed avoids the corner. This completes the proof of Theorem~\ref{mainthm}. But to prove Lemma~\ref{cornerstone}, we need to consider different variants of decomposition~\eqref{decomp}. For example, if $I_1 = \{1\}$ and $I_2 = \{3\}$, then this guarantees that an $\SRBM^{|I|}([R]_I, [\mu]_I, [A]_I)$ avoids the corner. Various cases are considered in Lemmas~\ref{61}, ~\ref{62} and~\ref{63}, which constitute the crux of the proof.

\subsection{Connection between an SRBM and competing Brownian particles}

Let us reduce multiple collisions of competing Brownian particles to an SRBM hitting edges of the boundary of high order. Consider the classical system of competing Brownian particles from Definition~\ref{classicdef}. By definition, the ranked particles $Y_1, \ldots, Y_N$ satisfy
$$
Y_1(t) \le \ldots \le Y_N(t).
$$
Consider the {\it gap process}: an $\BR^{N-1}_+$-valued process defined by
$$
Z = (Z(t), t \ge 0),\ \ Z(t) = (Z_1(t), \ldots, Z_{N-1}(t))', \ \ Z_k(t) = Y_{k+1}(t) - Y_k(t).
$$
It was shown in \cite{BFK2005} that this is an $\SRBM^{N-1}(R, \mu, A)$ in the orthant $S = \BR_+^{N-1}$ with parameters 
\begin{equation}
\label{R12}
R = 
\begin{bmatrix}
1 & -1/2 & 0 & 0 & \ldots & 0 & 0\\
-1/2 & 1 & -1/2 & 0 & \ldots & 0 & 0\\
0 & -1/2 & 1 & 0 & \ldots & 0 & 0\\
\vdots & \vdots & \vdots & \vdots & \ddots & \ddots & \ddots\\
0 & 0 & 0 & 0 & \ldots & 1 & -1/2\\
0 & 0 & 0 & 0 & \ldots & -1/2 & 1
\end{bmatrix},
\end{equation}
\begin{equation}
\label{mu}
\mu = \left(g_2 - g_1, g_3 - g_4, \ldots, g_N - g_{N-1}\right)',
\end{equation}
\begin{equation}
\label{A}
A = 
\begin{bmatrix}
\si_1^2 + \si_2^2 & -\si_2^2 & 0 & 0 & \ldots & 0 & 0\\
-\si_2^2 & \si_2^2 + \si_3^2 & -\si_3^2 & 0 & \ldots & 0 & 0\\
0 & -\si_3^2 & \si_3^2 + \si_4^2 & -\si_4^2 & \ldots & 0 & 0\\
\vdots & \vdots & \vdots & \vdots & \ddots & \vdots & \vdots\\
0 & 0 & 0 & 0 & \ldots & \si_{N-2}^2 + \si_{N-1}^2 & -\si_{N-1}^2\\
0 & 0 & 0 & 0 & \ldots & -\si_{N-1}^2 & \si_{N-1}^2 + \si_N^2
\end{bmatrix}
\end{equation}
Note that the matrix $R$ is a reflection nonsingular $\CM$-matrix. This follows from the fact that $I_{N-1} - R \ge 0$, and $R^{-1} \ge 0$ (which, in turn, was proved in \cite[Proposition 2.1(i)]{MyOwn6}). The following lemma translates statements about multiple collisions and multicollisions of competing Brownian particles to the language of an SRBM. The proof is trivial and is therefore omitted. 

\begin{lemma}
\label{red}
Consider a classical system of $N$ competing Brownian particles from Definition~\ref{classicdef}. Then there is a multicollision with pattern $I$ at time $t$ if and only if the gap process hits the edge $S_I$ at time $t$. For example, there is a total collision at time $t$ if and only if the gap process hits the corner at time $t$. 
\end{lemma}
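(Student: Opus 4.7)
The plan is to unpack definitions on both sides and observe that they match termwise. By Definition~\ref{Pat}, a multicollision with pattern $I$ at time $t$ means $Y_k(t) = Y_{k+1}(t)$ for every $k \in I$. By the definition of the gap process stated just above the lemma, $Z_k(t) = Y_{k+1}(t) - Y_k(t)$, and the ordering~\eqref{154} guarantees $Z_k(t) \ge 0$ for all $k$ and $t$. Hence $Y_k(t) = Y_{k+1}(t)$ is equivalent to $Z_k(t) = 0$, and this equivalence holds simultaneously for each individual index $k$.

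From here the proof is a single step: taking the conjunction over $k \in I$, a multicollision with pattern $I$ at time $t$ is equivalent to $Z_k(t) = 0$ for all $k \in I$, which by the definition
\[
S_I = \{x \in \BR^{N-1}_+ \mid x_k = 0 \text{ for all } k \in I\}
\]
is exactly the assertion $Z(t) \in S_I$, i.e., the gap process hits the edge $S_I$ at time $t$. The ``for example'' part is the special case $I = \{1, 2, \ldots, N-1\}$: a total collision is the condition $Y_1(t) = Y_2(t) = \ldots = Y_N(t)$, equivalent to $Z_1(t) = \ldots = Z_{N-1}(t) = 0$, i.e., $Z(t) = 0$, which is the corner of $\BR^{N-1}_+$.

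Since the statement is an exact rewriting of componentwise equalities between consecutive ranked particles as vanishings of the corresponding gaps, there is no genuine obstacle. The only thing worth emphasizing is that the equivalence is a pointwise (in $t$) identity of events, not a statement about sample paths, so no measurability or SRBM-specific argument is needed beyond the fact, recorded from~\cite{BFK2005}, that the gap process takes values in $\BR^{N-1}_+$. In particular, the ordering constraint~\eqref{154} is what makes the ``only if'' direction valid: without it, $Y_{k+1}(t) - Y_k(t) = 0$ would require $\lvert Y_{k+1}(t) - Y_k(t)\rvert = 0$, but here nonnegativity is automatic, so the two conditions coincide.
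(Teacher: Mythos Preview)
Your proof is correct and is exactly the trivial definition-unpacking the paper has in mind; the paper itself omits the proof with the remark that it ``is trivial and is therefore omitted.'' There is nothing to add.
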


For example, $Y_1(t) = Y_2(t)$ and $Y_3(t) = Y_4(t) = Y_5(t)$ is a multicollision of order $3$, with pattern $\{1, 3, 4\}$, which is equivalent of the gap process hitting the edge $\{z_1 = z_3 = z_4 = 0\}$. Similarly, $Y_3(t) = Y_4(t) = Y_5(t) = Y_6(t)$ is a collision of order $3$ (which is also a particular case of a multicollision of order $3$, with pattern $\{3, 4, 5\}$), and it is equivalent to the gap process hitting the edge $\{z_3 = z_4 = z_5 = 0\}$. 

\subsection{Some preliminary calculations}

As mentioned before, the matrix $R$ in~\eqref{R12} is itself symmetric. Therefore, we can take $C = I_{N-1}$, and $\oR = R$. Without loss of generality, let 
$$
\rho_{ij} = 0,\ \ i = 0,\ N,\ j = 0, \ldots, N\ \mbox{or}\ j = 0,\ N,\ i = 0, \ldots, N.
$$
This is consistent with the notation~\eqref{invR}. Note that $\rho_{ij} > 0$ for $i, j = 1, \ldots, N-1$: all elements of the matrix $R^{-1}$ are positive. Therefore, we can apply an estimate from Lemma~\ref{simple}:
$$
c_+ := \max\limits_{x \in \BR^{N-1}\setminus\{0\}}
\frac{x'R^{-1}AR^{-1}x}{x'R^{-1}x} \le 
\max\limits_{1 \le k \le l \le N-1}\frac{\bigl(R^{-1}AR^{-1}\bigr)_{kl}}{\rho_{kl}}.
$$

\begin{lemma}
\label{trace}
For the matrix $R$ given by~\eqref{R12} and the matrix $A$ given by~\eqref{A}, we have in the notation of ~\eqref{CT}:
\begin{equation}
\label{CTcalc}
\tr\bigl(R^{-1}A\bigr) = \CT(\si)\,.
\end{equation}
\end{lemma}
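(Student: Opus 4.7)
The plan is a direct calculation using the explicit formula \eqref{invR} for the entries $\rho_{ij}$ of $R^{-1}$ together with the tridiagonal form of $A$ given in \eqref{A}. Since $A_{ji}$ vanishes unless $j \in \{i-1, i, i+1\}$, the diagonal entry $(R^{-1}A)_{ii}$ reduces to a sum of three terms, namely
\[
(R^{-1}A)_{ii} = -\rho_{i,i-1}\si_i^2 + \rho_{ii}(\si_i^2 + \si_{i+1}^2) - \rho_{i,i+1}\si_{i+1}^2,
\]
with the convention that $\rho_{i,0} = \rho_{i,N} = 0$, which matches~\eqref{invR} at the boundary.

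Next I would substitute the explicit values $\rho_{ii} = 2i(N-i)/N$, $\rho_{i,i-1} = 2(i-1)(N-i)/N$, and $\rho_{i,i+1} = 2i(N-i-1)/N$ from~\eqref{invR}, and collect the coefficients of $\si_i^2$ and $\si_{i+1}^2$. The factor $2(N-i)/N$ comes out common in the $\si_i^2$ coefficient (since $i - (i-1) = 1$), and the factor $2i/N$ comes out common in the $\si_{i+1}^2$ coefficient (since $(N-i) - (N-i-1) = 1$). This gives the clean identity
\[
(R^{-1}A)_{ii} = \frac{2(N-i)}{N}\si_i^2 + \frac{2i}{N}\si_{i+1}^2, \qquad i = 1,\ldots, N-1.
\]

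Finally I would sum over $i = 1, \ldots, N-1$ and reindex the second sum via $p = i+1$. For a fixed $p$ with $2 \le p \le N-1$, the coefficient of $\si_p^2$ is $2(N-p)/N + 2(p-1)/N = 2(N-1)/N$, while the boundary terms $\si_1^2$ and $\si_N^2$ each pick up a single coefficient equal to $2(N-1)/N$. Therefore every $\si_p^2$ appears with the same coefficient $2(N-1)/N$, yielding
\[
\tr(R^{-1}A) = \frac{2(N-1)}{N}\SL_{p=1}^N \si_p^2 = \CT(\si),
\]
as required.

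There is no real obstacle here; the only mild subtlety is the bookkeeping at $i = 1$ and $i = N-1$, where the conventional values $\rho_{1,0} = 0$ and $\rho_{N-1,N} = 0$ (consistent with~\eqref{invR}) ensure that the formula $(R^{-1}A)_{ii} = (2(N-i)/N)\si_i^2 + (2i/N)\si_{i+1}^2$ holds uniformly, so that the reindexing in the final step produces the symmetric coefficient $2(N-1)/N$ for each $\si_p^2$, including $\si_1^2$ and $\si_N^2$.
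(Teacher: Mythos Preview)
Your proof is correct and follows essentially the same direct-calculation approach as the paper: both use the explicit formula~\eqref{invR} for $\rho_{ij}$ together with the tridiagonal structure of $A$, and then collect the coefficient of each $\si_p^2$. The only difference is organizational---the paper writes $\tr(R^{-1}A)=\sum_{i,j}\rho_{ij}a_{ij}$ and groups terms by $\si_k^2$ directly, whereas you first derive the clean closed form $(R^{-1}A)_{ii}=\tfrac{2(N-i)}{N}\si_i^2+\tfrac{2i}{N}\si_{i+1}^2$ and then sum, which is arguably a bit tidier.
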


\begin{proof} Straightforward calculation gives
\begin{align*}
\tr\bigl(R^{-1}A\bigr) =& \SL_{i=1}^{N-1}\SL_{j=1}^{N-1}\rho_{ij}a_{ij} = \SL_{i=1}^{N-1}(\si_i^2 + \si_{i+1}^2)\frac{2i(N-i)}N   \\ & + 2\SL_{i=2}^{N-1}(-\si_i^2)\frac{2(i-1)(N-i)}N = \frac{2(N-1)}N\si_1^2 + \frac{2(N-1)}N\si_N^2  \\ & + \SL_{k=2}^{N-1}\si_k^2\left(\frac{2k(N-k)}N + \frac{2(k-1)(N-k+1)}N - 2\frac{2(k-1)(N-k)}N\right) \\ & = \frac{2(N-1)}N\SL_{k=1}^N\si_k^2 = \CT(\si). 
\end{align*}
\end{proof}

The following lemma helps us simplify the matrix $R^{-1}AR^{-1}$, where $A$ is given by~\eqref{A}, and $R^{-1}$ is given by~\eqref{invR}.  

\begin{lemma} Consider the matrix $A$ as in~\eqref{A}, and take a symmetric $(N-1)\times (N-1)$-matrix $Q = (q_{ij})$. Augment it by two additional rows and two additional columns, one from each side, and fill them with zeros: 
$$
q_{ij} = 0\ \ \mbox{for}\ \ i = 0,\, N,\ j = 0, \ldots, N, \  \mbox{and for}\ \ j = 0,\, N,\ i = 0, \ldots, N.
$$
Then for $k, l = 1, \ldots, N-1$ we have:
$$
(QAQ)_{kl} = \SL_{p=1}^N\left(q_{pk} - q_{p-1, k}\right)\left(q_{pl} - q_{p-1, l}\right)\si_p^2.
$$
\label{matrixes}
\end{lemma}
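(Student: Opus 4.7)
The plan is to exploit a clean factorization of $A$ as a ``weighted incidence'' quadratic form. Observe that the tridiagonal structure of $A$ in~\eqref{A} is exactly the form one gets from taking consecutive differences: introduce the $N\times(N-1)$ matrix $D$ with entries $D_{pk}:=\delta_{p,k+1}-\delta_{p,k}$, and the $N\times N$ diagonal matrix $\Sigma:=\diag(\si_1^2,\ldots,\si_N^2)$. The claim is that
\[
A = D'\Sigma D.
\]
This is checked by inspection of the three nonzero patterns: the $(k,k)$ entry of $D'\Sigma D$ is $\si_k^2+\si_{k+1}^2$; the $(k,k+1)$ and $(k+1,k)$ entries are $-\si_{k+1}^2$ (the only $p$ contributing is $p=k+1$); and all other entries vanish. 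This matches $A$ term by term.

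Next, since $Q$ is symmetric, $QAQ = Q'D'\Sigma DQ = (DQ)'\Sigma (DQ)$, so
\[
(QAQ)_{kl} \;=\; \SL_{p=1}^N (DQ)_{pk}(DQ)_{pl}\si_p^2.
\]
It then remains to identify $(DQ)_{pk}$ with the discrete difference $q_{pk}-q_{p-1,k}$. A direct calculation using $D_{pi}=\delta_{p,i+1}-\delta_{p,i}$ gives
\[
(DQ)_{pk} \;=\; \SL_{i=1}^{N-1} D_{pi}\,q_{ik} \;=\; q_{p-1,k}-q_{p,k},
\]
where the boundary cases $p=1$ and $p=N$ produce, respectively, $-q_{1,k}$ and $q_{N-1,k}$; these are precisely $q_{0,k}-q_{1,k}$ and $q_{N-1,k}-q_{N,k}$ once the augmentation convention $q_{0,k}=q_{N,k}=0$ is invoked. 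Substituting into the display above and using $(a-b)(c-d)=(b-a)(d-c)$ yields the desired identity.

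The whole argument is elementary linear algebra once the factorization $A=D'\Sigma D$ is in hand, so there is no real obstacle; the only thing worth being careful about is the boundary handling at $p=1$ and $p=N$, where the incidence matrix $D$ has only one nonzero entry in its row rather than two. The augmentation by zero rows and columns of $Q$ in the statement of the lemma is precisely what makes the formula $q_{p-1,k}-q_{p,k}$ valid uniformly across all $p\in\{1,\ldots,N\}$, so introducing that convention at the start of the computation is what keeps the proof free of case splits.
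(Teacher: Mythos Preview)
Your proof is correct. It takes a genuinely different route from the paper. The paper simply expands $(QAQ)_{kl}=\sum_{i,j}q_{ik}q_{jl}a_{ij}$, substitutes the tridiagonal entries of $A$, and then reindexes and regroups the three resulting sums by hand to recognize the factorization $\sum_p(q_{pk}-q_{p-1,k})(q_{pl}-q_{p-1,l})\si_p^2$. Your argument instead identifies the structural reason for that factorization up front: the matrix $A$ is the Gram matrix $D'\Sigma D$ of the discrete difference operator $D$ weighted by $\Sigma=\diag(\si_1^2,\ldots,\si_N^2)$, so $QAQ=(DQ)'\Sigma(DQ)$ and the result drops out immediately once $(DQ)_{pk}=q_{p-1,k}-q_{pk}$ is read off. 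This is more conceptual and avoids the index juggling of the paper's direct computation; it also makes transparent why the augmentation convention $q_{0,k}=q_{N,k}=0$ appears, namely to absorb the boundary rows of $D$. The paper's approach, on the other hand, requires no auxiliary objects and is entirely self-contained. Both are short; yours explains more.
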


\begin{proof} The matrix $A$ is tridiagonal: 
$$
\begin{cases}
a_{ii} = \si_i^2 + \si_{i+1}^2,\ i = 1, \ldots, N-1;\\
a_{i, i+1} = a_{i+1, i} = -\si_{i+1}^2,\ i = 1, \ldots, N-2;\\
a_{ij} = 0,\ \ i, j = 1, \ldots, N-1,\ |i - j| \ge 2.
\end{cases}
$$
Using the symmetry of $Q$, we have: 
\begin{align*}
(QAQ)_{kl} =& \SL_{i=1}^{N-1}\SL_{j=1}^{N-1}q_{ik}q_{jl}a_{ij} = \SL_{p=1}^{N-1}\left(\si_p^2 + \si_{p+1}^2\right)q_{pk}q_{pl} - \SL_{p=2}^{N-1}\si_p^2q_{pk}q_{p-1, l} - \SL_{p=2}^{N-1}\si_p^2q_{p-1, k}q_{pl} \\ & = 
\SL_{p=1}^{N}\si_p^2q_{pk}q_{pl} + \SL_{p=1}^{N}\si_p^2q_{p-1, k}q_{p-1, l} - \SL_{p=1}^{N}\si_p^2q_{pk}q_{p-1, l} - \SL_{p=1}^{N}\si_p^2q_{p-1, k}q_{pl} \\ & = 
\SL_{p=1}^{N}\left(q_{pk} - q_{p-1, k}\right)\left(q_{pl} - q_{p-1, l}\right)\si_p^2.
\end{align*}
\end{proof}

Lemma~\ref{matrixes} enables us to calculate $(R^{-1}AR^{-1})_{kl}$, where $A$ and $R$ are given by~\eqref{A} and~\eqref{R12}. 

\begin{lemma} Suppose the matrix $R$ is given by~\eqref{R12}, and the matrix $A$ is given by~\eqref{A}. 
Then for $1 \le k \le l \le N-1$ we have:
\begin{equation}
\label{RAR}
\bigl(R^{-1}AR^{-1}\bigr)_{kl} = \frac{4(N-k)(N-l)}{N^2}\SL_{p=1}^k\si_p^2 - \frac{4k(N-l)}{N^2}\SL_{p=k+1}^l\si_p^2 
+ \frac{4kl}{N^2}\SL_{p=l+1}^N\si_p^2. 
\end{equation}
\label{RARlemma}
\end{lemma}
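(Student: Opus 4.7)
The plan is to apply Lemma~\ref{matrixes} with $Q = R^{-1}$, which is justified because $R^{-1}$ is symmetric and its entries~\eqref{invR} can be extended by zeros in the $0$th and $N$th rows and columns. This reduces everything to computing the first differences $\rho_{pk} - \rho_{p-1,k}$ as functions of $p$, for fixed $k$.

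First, I would compute these differences explicitly by splitting into two regimes according to the piecewise formula~\eqref{invR}. For $1 \le p \le k$, both $\rho_{pk}$ and $\rho_{p-1,k}$ come from the branch $\rho_{ik} = 2i(N-k)/N$, so
\[
\rho_{pk} - \rho_{p-1,k} = \frac{2(N-k)}{N}.
\]
For $k+1 \le p \le N$, both come from the branch $\rho_{ik} = 2k(N-i)/N$, so $\rho_{pk} - \rho_{p-1,k} = -2k/N$. The transition at $p = k+1$ is consistent because at $p = k$ the two branches agree, and the boundary value at $p = N$ is consistent because $\rho_{N,k} = 0$.

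Next, since $k \le l$, I multiply the analogous differences for the index $l$ and split the sum $\sum_{p=1}^N$ into three ranges: $1 \le p \le k$, where both factors are positive and the product is $4(N-k)(N-l)/N^2$; $k+1 \le p \le l$, where the $k$-factor is $-2k/N$ and the $l$-factor is still $2(N-l)/N$, giving $-4k(N-l)/N^2$; and $l+1 \le p \le N$, where both factors are negative and the product is $4kl/N^2$. Collecting these three contributions and applying Lemma~\ref{matrixes} yields exactly~\eqref{RAR}.

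The calculation is completely routine; the only mild obstacle is keeping the three ranges of summation and the transition points $p = k$, $p = k+1$, $p = l$, $p = l+1$ straight, especially verifying that the boundary extensions $\rho_{0,\cdot} = \rho_{N,\cdot} = 0$ do not produce spurious boundary terms. Since Lemma~\ref{matrixes} already absorbs these zero-padding conventions, no extra care is needed at the endpoints.
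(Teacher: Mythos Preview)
Your proposal is correct and follows exactly the same approach as the paper: apply Lemma~\ref{matrixes} with $Q=R^{-1}$, compute the first differences $\rho_{pk}-\rho_{p-1,k}$ from the piecewise formula~\eqref{invR}, and split the sum into the three ranges $p\le k$, $k<p\le l$, $p>l$. Your write-up is in fact slightly cleaner than the paper's, which contains an evident copy-paste typo in the $p>l$ case (it reproduces the $p\le k$ computation there rather than the correct values $-2k/N$ and $-2l/N$ that you obtain).
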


\begin{proof} Apply Lemma~\ref{matrixes} to $Q = R^{-1}$, given by~\eqref{invR}, so that $q_{ij} = \rho_{ij}$. For $p \le k$, we get: 
For $p \le k$ we have:
$$
\rho_{pk} - \rho_{p-1, k} = \frac{2p(N-k)}{N} - \frac{2(p-1)(N-k)}N = \frac{2(N-k)}N,
$$
$$
\rho_{pl} - \rho_{p-1, l} = \frac{2p(N-l)}{N} - \frac{2(p-1)(N-l)}N = \frac{2(N-l)}N.
$$
For $k < p \le l$, we have:
$$
\rho_{pk} - \rho_{p-1, k} = \frac{2k(N-p)}{N} - \frac{2k(N-p+1)}N = -\frac{2k}N,
$$
$$
\rho_{pl} - \rho_{p-1, l} = \frac{2p(N-l)}{N} - \frac{2(p-1)(N-l)}N = \frac{2(N-l)}N.
$$
For $p > l$, we have:
$$
\rho_{pk} - \rho_{p-1, k} = \frac{2p(N-k)}{N} - \frac{2(p-1)(N-k)}N = \frac{2(N-k)}N,
$$
$$
\rho_{pl} - \rho_{p-1, l} = \frac{2p(N-l)}{N} - \frac{2(p-1)(N-l)}N = \frac{2(N-l)}N.
$$
The rest of the proof is trivial. 
\end{proof}

\subsection{Proof of Theorem~\ref{totalcor}} 

Use Corollary~\ref{cor:inv-R} and Corollary~\ref{simple} for matrices $R$ and $A$, given by~\eqref{R12} and~\eqref{A} respectively. We have the following sufficient condition for avoiding total collisions:
\begin{equation}
\label{form}
\tr\bigl(R^{-1}A\bigr) - 2\max\limits_{1 \le k \le l \le N-1}\frac{(R^{-1} AR^{-1})_{kl}}{\rho_{kl}} \ge 0.
\end{equation}
For $1 \le k \le l \le N-1$, denote 
$$
c_{k, l}(\si) = \tr\bigl(R^{-1}A\bigr) -  2\frac{\bigl(R^{-1}AR^{-1}\bigr)_{kl}}{\rho_{kl}}.
$$
Then we have:
\begin{equation}
\label{3}
\tr\bigl(R^{-1}A\bigr) - 2\max\limits_{k, l = 1, \ldots, N-1}\frac{(R^{-1} AR^{-1})_{kl}}{\rho_{kl}} = \min\limits_{1 \le k \le l \le N-1}c_{k, l}(\si).
\end{equation}

\begin{lemma} Using definitions of $c_l(\si)$ and $\si^{\leftarrow}$ from subsection 1.2, we have:

(i) For $2 \le k \le l \le N-2$, we have: $c_{k, l}(\si) \ge 0$.

(ii) For $1 = k \le l \le N - 1$, we have: $c_{k, l}(\si) = c_l(\si)$. 

(iii) For $1 \le k \le l = N-1$, we have: $c_{k, l}(\si) = c_{N-k}\left(\si^{\leftarrow}\right)$. 

\label{5555}
\end{lemma}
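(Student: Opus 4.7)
My plan is to compute $c_{k,l}(\sigma)$ explicitly as a positive linear combination of the $\sigma_p^2$, and then verify each of the three claims by inspection.

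The first step is a direct computation. Using $\rho_{kl} = 2k(N-l)/N$ for $k \le l$, the formula~\eqref{RAR} from Lemma~\ref{RARlemma}, and Lemma~\ref{trace}, I divide and subtract to obtain
\begin{equation*}
c_{k,l}(\sigma) = \CT(\sigma) - \frac{4(N-k)}{Nk}\SL_{p=1}^k\sigma_p^2 + \frac{4}{N}\SL_{p=k+1}^l\sigma_p^2 - \frac{4l}{N(N-l)}\SL_{p=l+1}^N\sigma_p^2.
\end{equation*}
Splitting $\CT(\sigma) = \tfrac{2(N-1)}{N}\sum_{p=1}^N\sigma_p^2$ across the three ranges $p\le k$, $k<p\le l$, $p>l$ and collecting, a routine simplification yields
\begin{equation*}
c_{k,l}(\sigma) = \left(\frac{2(N+1)}{N}-\frac{4}{k}\right)\SL_{p=1}^k\sigma_p^2 + \frac{2(N+1)}{N}\SL_{p=k+1}^l\sigma_p^2 + \left(\frac{2(N+1)}{N}-\frac{4}{N-l}\right)\SL_{p=l+1}^N\sigma_p^2.
\end{equation*}
This is the key identity; everything else follows from it quickly.

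For (i), when $2 \le k$ we have $\tfrac{4}{k} \le 2 < \tfrac{2(N+1)}{N}$, and when $l \le N-2$ we have $\tfrac{4}{N-l} \le 2 < \tfrac{2(N+1)}{N}$. Thus all three bracketed coefficients in the identity above are strictly positive, and since $\sigma_p^2 \ge 0$, this gives $c_{k,l}(\sigma) \ge 0$.

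For (ii), setting $k=1$ collapses the first coefficient to $\tfrac{2(N+1)}{N}-4 = -\tfrac{2(N-1)}{N}$, and an elementary manipulation shows that the third coefficient $\tfrac{2(N+1)}{N}-\tfrac{4}{N-l}$ equals $\tfrac{2(N-1)(N-l)-4l}{(N-l)N}$. Comparing with the definition of $c_l(\sigma)$ (with $M=N$) from Subsection 1.2, the two expressions coincide term by term. For (iii), I set $l=N-1$ and reindex the sums in $c_j(\sigma^{\leftarrow})$ by the substitution $q = N+1-p$: the ranges $\{2,\ldots,j\}$ and $\{j+1,\ldots,N\}$ transform into $\{N-j+1,\ldots,N-1\}$ and $\{1,\ldots,N-j\}$ respectively, and the same coefficient identity used in (ii) (with $l$ replaced by $N-j$) matches the result against $c_{N-j,N-1}(\sigma)$, with $\sigma_N^2$ playing the role of $\alpha_1^2$. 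No step here is a real obstacle; the only thing to watch is the sign and index bookkeeping in the identity $\tfrac{2(N-1)(N-l)-4l}{(N-l)N} = \tfrac{2(N+1)}{N}-\tfrac{4}{N-l}$, which underlies all three parts.
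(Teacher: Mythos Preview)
Your proof is correct and follows essentially the same route as the paper: compute $c_{k,l}(\si)$ explicitly as a sum of three groups of $\si_p^2$'s with explicit coefficients, then read off (i)--(iii). Your simplification of the first and third coefficients to the form $\tfrac{2(N+1)}{N}-\tfrac{4}{k}$ and $\tfrac{2(N+1)}{N}-\tfrac{4}{N-l}$ is a slight streamlining of the paper's expressions $\tfrac{2(N-1)k-4(N-k)}{kN}$ and $\tfrac{2(N-1)(N-l)-4l}{(N-l)N}$, but the two are algebraically identical and the arguments for all three parts coincide.
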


Assuming that Lemma~\ref{5555} is proved, let us finish the proof of Theorem~\ref{totalcor}. Let
\begin{equation}
\label{4}
\de(\si) := \min\limits_{2 \le k \le l \le N-2}c_{k, l}(\si).
\end{equation}
If $N < 4$, let $\de(\si) := 0$. By Lemma~\ref{5555} (i), we always have: $\de(\si) \ge 0$. Recall the definition of $\CP(\si)$ from~\eqref{CP} and use Lemma~\ref{5555} (ii), (iii):
\begin{equation}
\label{5}
\min\left(c_{1, 1}(\si),\, c_{1, 2}(\si),\, \ldots,\, c_{1, N-1}(\si),\, c_{2, N-1}(\si),\, \ldots,\, c_{N-1, N-1}(\si)\right) = \CP(\si).
\end{equation}
Comparing~\eqref{3}, ~\eqref{4} and~\eqref{5}, we have:
\begin{equation}
\label{10000}
\min\limits_{1 \le k \le l \le N-1}\left[\tr\bigl(R^{-1}A\bigr) - 2\frac{(R^{-1} AR^{-1})_{kl}}{\rho_{kl}}\right] = \min(\CP(\si), \de(\si)).
\end{equation}
Thus
$$
\min\limits_{1 \le k \le l \le N-1}c_{k, l}(\si) \ge 0\ \ \mbox{if and only if}\ \ \CP(\si) \ge 0.
$$
This completes the proof of Theorem~\ref{totalcor}.  \qed

\medskip
\noindent

{\it Proof of Lemma~\ref{5555}}: We can simplify the expression for $c_{k, l}(\si)$. Applying~\eqref{RAR} and~\eqref{invR}, we have: for $1 \le k \le l \le N -1$,
$$
\frac{\bigl(R^{-1}AR^{-1}\bigr)_{kl}}{\rho_{kl}} = 
\frac{2(N-k)}{Nk}\SL_{p=1}^k\si_p^2 - \frac{2}{N}\SL_{p=k+1}^l\si_p^2 
+ \frac{2l}{N(N-l)}\SL_{p=l+1}^N\si_p^2.
$$
Therefore, we have:
\begin{align*}
c_{k, l}(\si) := & \left(\frac{2(N-1)}{N} - \frac{4(N-k)}{Nk}\right)\SL_{p=1}^k\si_p^2 \\ & + \left(\frac{2(N-1)}{N} + \frac{4}{N}\right)\SL_{p=k+1}^l\si_p^2
+ \left(\frac{2(N-1)}{N} - \frac{4l}{(N-l)N}\right)\SL_{p=l+1}^N\si_p^2
\\ & = \frac{2(N-1)k - 4(N-k)}{kN}\SL_{p=1}^k\si_p^2  + \frac{2(N+1)}N\SL_{p=k+1}^l\si_p^2 \\ &+ \frac{2(N-1)(N-l) - 4l}{(N-l)N}\SL_{p=l+1}^N\si_p^2.
\end{align*}
Now, for $k \ge 2$ we get: 
$$
2(N-1)k - 4(N - k) \ge 4(N-1) - 4N + 8 = 4 \ge 0.
$$
Similarly, for $l \le N-2$ we get:
$$
2(N-1)(N-l) - 4l \ge 0.
$$
This proves part (i) of Lemma~\ref{5555}. Parts (ii) and (iii) are now straightforward. \ \  $\square$

\subsection{Proof of Theorem~\ref{mainthm}} Fix a subset $I \subseteq \{1, \ldots, N-1\}$ such that $J \subseteq I$. Take the matrices $R$ and $A$ given by~\eqref{R12} and~\eqref{A}. Essentially, we need to prove the following lemma:

\begin{lemma}
\label{cornerstone}
If the subset $I$ satisfies Assumption (B), then the process 
$$
Z = (Z(t), t \ge 0) = \SRBM^{|I|}\left([R]_I, 0, [A]_I\right)
$$
does not hit the origin.
\end{lemma}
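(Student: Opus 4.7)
The plan exploits the non-adjacency of the discrete intervals $I_1,\ldots,I_r$ in the decomposition~\eqref{decomp}. Since both $R$ from~\eqref{R12} and $A$ from~\eqref{A} are tridiagonal, any indices $p\in I_j$ and $q\in I_k$ with $j\ne k$ satisfy $|p-q|\ge 2$, so $R_{pq}=A_{pq}=0$. Consequently $[R]_I$ and $[A]_I$ are block-diagonal with blocks corresponding to the individual $I_j$'s, the driving Brownian motion decouples block-by-block, and $Z$ splits into independent components $Z^{(j)}=\SRBM^{|I_j|}([R]_{I_j},0,[A]_{I_j})$, $j=1,\ldots,r$. In particular, $Z$ hits the origin of $\BR^{|I|}$ if and only if all of $Z^{(1)},\ldots,Z^{(r)}$ are simultaneously at zero at some common positive time.

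This yields the central reduction: to prove $Z$ avoids the origin, it suffices to find a sub-union $I'=I_{i_1}\cup\ldots\cup I_{i_s}$ of blocks such that the sub-process $(Z^{(i_1)},\ldots,Z^{(i_s)})$, which by the same block-diagonality argument is a copy of $\SRBM^{|I'|}([R]_{I'},0,[A]_{I'})$, a.s.\ avoids its own corner. Each of the three bullets of Assumption~(B) will furnish such an $I'$.

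For the third bullet I would apply Theorem~\ref{cornerthm}(i) together with Lemma~\ref{simple} to the SRBM on $I'$. Block-diagonality and Lemma~\ref{trace} give
\[
\tr\bigl([R]_{I'}^{-1}[A]_{I'}\bigr)=\SL_{k=1}^{s}\CT(\ol{I}_{i_k}),
\]
and the maximum defining $\ol{c}_+$ factors over blocks, since cross-block entries of $[R]_{I'}^{-1}$ vanish. Mimicking the block-by-block argument of Theorem~\ref{totalcor} (via Lemma~\ref{5555} and~\eqref{10000}), each per-block contribution equals $\CT(\ol{I}_{i_k})-2\ol{c}_+^{(k)}=\min(\CP(\ol{I}_{i_k}),\de(\ol{I}_{i_k}))$ for some $\de(\ol{I}_{i_k})\ge 0$. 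Assembling these, the condition $\tr\ge 2\ol{c}_+$ becomes, for every $k=1,\ldots,s$,
\[
\SL_{\substack{j=1\\ j\ne k}}^{s}\CT(\ol{I}_{i_j})+\min\bigl(\CP(\ol{I}_{i_k}),\de(\ol{I}_{i_k})\bigr)\ge 0,
\]
which is implied by Assumption~(A) since $\CT\ge 0$ and $\de\ge 0$.

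For the first bullet, two singleton blocks $I_{i_1}=\{k_1\}$ and $I_{i_2}=\{k_2\}$ with $|k_1-k_2|\ge 2$ give $[R]_{\{k_1,k_2\}}=I_2$ and $[A]_{\{k_1,k_2\}}$ diagonal, so the two-dimensional sub-SRBM is a pair of independent one-dimensional reflected Brownian motions, equivalent in law to $(\sqrt{a_1}|\be_1|,\sqrt{a_2}|\be_2|)$ for independent standard Brownian motions $\be_1,\be_2$; its corner is never hit because points are polar for two-dimensional Brownian motion. For the second bullet, a block $I_{i_1}=\{k-1,k\}$ on which local concavity~\eqref{localconcavity} holds produces a two-dimensional sub-SRBM whose parameters coincide with the gap-process parameters of a three-particle competing system with diffusions $\si_{k-1},\si_k,\si_{k+1}$, so Proposition~\ref{elegant} applied to this sub-system combined with Lemma~\ref{red} shows the sub-SRBM avoids its corner. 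The main technical obstacle is the third bullet, specifically verifying that $\ol{c}_+$ factorises block-wise and that the per-block quantities match the $\CP(\ol{I}_{i_k})$ and $\de(\ol{I}_{i_k})$ of Theorem~\ref{totalcor}; the first two bullets are short once the block decomposition is in place.
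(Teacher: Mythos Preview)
Your proposal is correct and follows essentially the same approach as the paper: block-diagonality of $[R]_I$ and $[A]_I$ (from tridiagonality of $R$ and $A$) yields independence of the sub-SRBMs $Z^{(j)}$, reducing the problem to finding a sub-union $I'$ whose SRBM avoids its corner (the paper's Remark~\ref{rmkcrucial}); the three bullets of Assumption~(B) are then handled exactly as you outline (the paper's Lemmas~\ref{61},~\ref{62},~\ref{63}). One small precision worth noting: Lemma~\ref{simple} cannot be applied directly to $[R]_{I'}^{-1}$, since its cross-block entries are zero rather than strictly positive; the paper instead first bounds $c_+$ by the maximum over blocks of the per-block ratios (via the elementary fraction inequality of Lemma~\ref{fraccomp}, packaged as Lemma~\ref{651}) and only then invokes Lemma~\ref{simple} within each block --- this is precisely the block-wise factorisation of $\ol{c}_+$ that you correctly flag as the main technical step.
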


If we prove Lemma~\ref{cornerstone}, then Theorem~\ref{mainthm} will automatically follow from this lemma and Theorem~\ref{corner2edge}. 
The rest of this subsection is devoted to the proof of Lemma~\ref{cornerstone}. 

Let us investigate the structure of the matrices $[R]_I^{-1}$ and $[A]_I^{-1}$. Split $I$ into disjoint non-adjacent discrete intervals: $I = I_1\cup I_2\cup \ldots \cup I_r$. Since the matrices $R$ and $A$ are tridiagonal, the matrices $[R]_I$ and $[A]_I$ have the following block-diagonal form:
$$
[R]_I = \diag\left([R]_{I_1}, \ldots, [R]_{I_r}\right),\ \ [A]_I = \diag\left([A]_{I_1}, \ldots, [A]_{I_r}\right).
$$
The following processes are independent SRBMs:
\begin{equation}
\label{components00}
[Z]_{I_j} = ([Z(t)]_{I_j}, t \ge 0) = \SRBM^{|I_j|}\left([R]_{I_j}, 0, [A]_{I_j}\right),\ \ j = 1, \ldots, s.
\end{equation}
For any subset $I' = I_{i_1}\cup\ldots\cup I_{i_s}$, the process
$$
[Z]_{I'} = ([Z(t)]_{I'}, t \ge 0) = \SRBM^{|I'|}\left([R]_{I'}, 0, [A]_{I'}\right).
$$
\begin{rmk}
\label{rmkcrucial}
If for some choice of $I'$ this process does not hit the origin of $\BR^{|I'|}_+$, then the original process $Z$ does not hit the origin, because of independence of~\eqref{components00}. In particular, for each $j = 1, \ldots, s$, the process $[Z]_{I_j}$ does not hit the origin, then $Z$ does not hit the origin. 
\end{rmk}

Now, let us state three lemmata.

\begin{lemma} If at least two of the discrete intervals $I_1,\ldots, I_r$ are singletons, then $Z$ a.s. at any time $t > 0$ does not hit the origin. 
\label{61}
\end{lemma}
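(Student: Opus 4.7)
\textbf{Proposed proof of Lemma~\ref{61}.} The plan is to exploit the block-diagonal structure produced by the non-adjacency of the $I_j$'s, reducing the hitting problem for the origin to a classical polarity fact for two-dimensional Brownian motion.

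Because the matrices $R$ in~\eqref{R12} and $A$ in~\eqref{A} are tridiagonal and the discrete intervals $I_1,\ldots,I_r$ are pairwise non-adjacent (so no two consecutive indices in $I$ come from different $I_j$'s), we have the block decompositions $[R]_I = \diag\bigl([R]_{I_1},\ldots,[R]_{I_r}\bigr)$ and $[A]_I = \diag\bigl([A]_{I_1},\ldots,[A]_{I_r}\bigr)$, as already noted right before the statement. Block-diagonality of $[A]_I$ makes the components of the driving Brownian motion across distinct blocks independent, and block-diagonality of $[R]_I$ ensures that each pushing direction lives within its own block; hence the coordinate subprocesses $[Z]_{I_1},\ldots,[Z]_{I_r}$ are independent SRBMs. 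Select two singletons $I_\alpha = \{k\}$ and $I_\beta = \{l\}$ among the $I_j$'s, and set $U := [Z]_{I_\alpha}$, $V := [Z]_{I_\beta}$. These are two independent one-dimensional reflected Brownian motions with positive variances $a_{kk}$, $a_{ll}$ and drifts $\mu_k$, $\mu_l$. Since $\{Z(t)=0\}$ forces in particular $U(t)=V(t)=0$, it suffices to prove that $U$ and $V$ almost surely never vanish simultaneously at a positive time.

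By Proposition~\ref{432}, whether the SRBM hits a given face of the boundary depends only on $R$ and $A$ and not on the drift vector or the initial condition. I would use this to replace the zero drift (which is already assumed in the statement of Lemma~\ref{cornerstone}) and to take the initial condition strictly inside the orthant, so that both $U(0)>0$ and $V(0)>0$. A driftless one-dimensional RBM starting from $c>0$ with variance $\alpha>0$ is, by Tanaka's formula combined with L\'evy's characterization, equal in law to $\sqrt{\alpha}\,|W(t) + c/\sqrt{\alpha}|$ for a standard Brownian motion $W$. Writing $U$ and $V$ in this form gives two \emph{independent} standard Brownian motions $W_1,W_2$ and positive constants $c_1,c_2$ such that
\begin{equation*}
\{U(t)=0,\ V(t)=0\} \;=\; \bigl\{(W_1(t),W_2(t)) = (-c_1,-c_2)\bigr\}.
\end{equation*}

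The process $(W_1,W_2)$ is a two-dimensional Brownian motion, and it is classical that two-dimensional Brownian motion is polar for points: almost surely it never visits any prescribed point of $\mathbb{R}^2$ (not even its own starting point, for positive times). Consequently $\mathbb{P}(\exists\,t>0:\,U(t)=V(t)=0) = 0$, and therefore $Z$ a.s. never reaches the origin of $\mathbb{R}^{|I|}_+$. There is essentially no serious obstacle here: the only thing that must be checked is that the independence of the singleton blocks really does come out of the block-diagonality of $[R]_I$ and $[A]_I$ and that, after applying Proposition~\ref{432}, one arrives at a configuration to which the polarity of points for planar Brownian motion applies. Note that Theorem~\ref{cornerthm} applied to the $2\times 2$ block $(U,V)$ alone is inconclusive unless the variances happen to coincide, which is why the independence-plus-polarity argument is the right tool rather than a direct appeal to the $\mathcal{M}$-matrix machinery.
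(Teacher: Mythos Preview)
Your proof is correct and follows essentially the same route as the paper: isolate two singleton blocks, use the block-diagonal structure of $[R]_I$ and $[A]_I$ to obtain two independent one-dimensional reflected Brownian motions, and conclude that they cannot vanish simultaneously. The paper simply asserts this last fact, whereas you spell it out via Tanaka's formula and polarity of points for planar Brownian motion; your closing observation that Theorem~\ref{cornerthm} is inconclusive for the $2\times 2$ block unless the variances coincide is accurate and nicely motivates why a separate argument is needed here.
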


\begin{lemma} If at least one $I_1,\ldots, I_r$ is a two-element subset $\{k-1, k\}$ with local concavity at $k$, then $Z$ a.s. at any time $t > 0$ does not hit the origin. 
\label{62}
\end{lemma}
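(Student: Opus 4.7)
The plan is to reduce the statement to avoiding triple collisions in an auxiliary three-particle system and then invoke Proposition~\ref{elegant}. Let $I_j = \{k-1, k\} \subseteq I$ be the distinguished two-element block given by the hypothesis. Since $I_1, \ldots, I_r$ are pairwise non-adjacent discrete intervals and since both $R$ and $A$ are tridiagonal, the matrices $[R]_I$ and $[A]_I$ are block-diagonal with blocks indexed by the $I_i$. Consequently the process $[Z]_{I_j}$ is an $\SRBM^2([R]_{I_j}, 0, [A]_{I_j})$, independent of the other blocks. By the independence observation in Remark~\ref{rmkcrucial}, it suffices to prove that this 2-dimensional process a.s. never hits the origin of $\BR^2_+$.

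Next I would identify $[Z]_{I_j}$ with the gap process of a 3-particle system. Because $I_j$ consists of \emph{consecutive} indices $k-1, k$, no further splitting occurs inside $I_j$, and the specialization of~\eqref{R12} and~\eqref{A} to $N'=3$ produces precisely the matrices $[R]_{I_j}$ and $[A]_{I_j}$ associated with the diffusion coefficients $\sigma_{k-1}, \sigma_k, \sigma_{k+1}$. Hence, via Lemma~\ref{red}, the event $\{[Z]_{I_j}\text{ hits the origin at some }t>0\}$ coincides with the event of a triple collision $Y'_1(t)=Y'_2(t)=Y'_3(t)$ in an auxiliary 3-particle system with these diffusion parameters.

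Finally, the hypothesis~\eqref{localconcavity} is exactly the global concavity condition~\eqref{concave} for the sequence $(\sigma_{k-1}^2, \sigma_k^2, \sigma_{k+1}^2)$ of length three. Proposition~\ref{elegant} applied to this three-particle system therefore gives that triple collisions occur a.s. at no time $t>0$, so $[Z]_{I_j}$ a.s. avoids the origin, and by the block-decomposition argument $Z$ a.s. avoids the origin.

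There is no real obstacle here: the content of the lemma is the sharp three-particle result from Proposition~\ref{elegant}, together with the bookkeeping that lets us extract the relevant 3-particle gap process from the block-diagonal decomposition of $[R]_I$ and $[A]_I$. The only point requiring any care is verifying that $[R]_{I_j}$ and $[A]_{I_j}$ indeed match the reflection and covariance matrices of a 3-particle system with diffusions $\sigma_{k-1},\sigma_k,\sigma_{k+1}$; this is immediate from the tridiagonal form of $R$ and $A$ combined with the fact that $k-1$ and $k$ are consecutive, so the corresponding $2\times 2$ principal submatrices are extracted intact.
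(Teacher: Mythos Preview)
Your argument is correct and is precisely the paper's approach, only spelled out in detail: the paper's proof is the one-liner ``Follows from Remark~\ref{reduct} and Proposition~\ref{elegant},'' and your proposal is exactly the unpacking of that sentence (use Remark~\ref{rmkcrucial} to reduce to the block $[Z]_{I_j}$, identify it with the gap process of a three-particle system via the tridiagonal structure, and apply Proposition~\ref{elegant} with the concavity condition~\eqref{localconcavity}).
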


\begin{lemma} If $I$ satisfies Assumption (A), then $Z$ a.s. at any time $t > 0$  does not hit the origin. 
\label{63}
\end{lemma}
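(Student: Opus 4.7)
The plan is to verify the hypothesis of Theorem~\ref{cornerthm}(i) for the process $Z = \SRBM^{|I|}([R]_I, 0, [A]_I)$, namely $\tr([R]_I^{-1}[A]_I) \ge 2c_+$, where $c_+$ is the constant from Lemma~\ref{constants} built from the matrices $[R]_I$ and $[A]_I$. Once this is established, Theorem~\ref{cornerthm}(i) directly yields that $Z$ a.s.\ does not hit the origin of $\BR^{|I|}_+$, which is the content of Lemma~\ref{63}.

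First I exploit the block structure forced by the non-adjacency of $I_1,\ldots,I_r$. Since $R$ and $A$ are tridiagonal, non-adjacency gives
\[
[R]_I = \diag\bigl([R]_{I_1},\ldots,[R]_{I_r}\bigr),\qquad [A]_I = \diag\bigl([A]_{I_1},\ldots,[A]_{I_r}\bigr),
\]
and for each $j$ the pair $([R]_{I_j},[A]_{I_j})$ is exactly the pair of matrices~\eqref{R12}--\eqref{A} associated to a sub-system of $|\ol{I}_j|$ competing Brownian particles with diffusion coefficients $(\si_p)_{p\in\ol{I}_j}$. Applying Lemma~\ref{trace} blockwise,
\[
\tr\bigl([R]_I^{-1}[A]_I\bigr) = \SL_{j=1}^{r}\tr\bigl([R]_{I_j}^{-1}[A]_{I_j}\bigr) = \SL_{j=1}^{r}\CT(\ol{I}_j).
\]

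The crux of the proof is the identification $c_+ = \max_{1\le j\le r} c_+^{(j)}$, where $c_+^{(j)}$ denotes the analogue of $c_+$ built from $[R]_{I_j}^{-1}$ and $[A]_{I_j}$. Replacing $c_+$ by the cruder upper bound $\ol{c}_+$ from Lemma~\ref{simple}, which sufficed in Theorem~\ref{totalcor}, is unavailable here: the off-block entries of $[R]_I^{-1}$ vanish, violating the strict positivity assumption $\rho_{ij}>0$ required by Lemma~\ref{simple}. Instead, write $x=(x^{(1)},\ldots,x^{(r)})$ with $x^{(j)}\in\BR^{|I_j|}_+$; the numerator and denominator of the Rayleigh quotient in Lemma~\ref{constants} split blockwise, each numerator summand is bounded by $c_+^{(j)}$ times the corresponding denominator summand, so summing and dividing gives $c_+\le\max_j c_+^{(j)}$, while testing $x$ supported in a single block gives the reverse. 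The denominators remain strictly positive on $\BR^{|I_j|}_+\setminus\{0\}$ because $[R]_{I_j}^{-1}$ has strictly positive diagonal entries and nonnegative off-diagonal entries. This is the step I expect to require the most care.

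Finally I import the quantitative piece of the proof of Theorem~\ref{totalcor}, applied within each block: Lemma~\ref{simple} combined with identity~\eqref{10000} and Lemma~\ref{5555}(i) gives, for every $j$,
\[
\CT(\ol{I}_j)-2c_+^{(j)} \;\ge\; \CT(\ol{I}_j)-2\ol{c}_+^{(j)} \;=\; \min_{1\le k\le l\le |I_j|} c_{k,l}^{(j)}(\si) \;\ge\; \min\bigl(\CP(\ol{I}_j),\,0\bigr).
\]
Picking $i$ with $c_+ = c_+^{(i)}$ and combining with the trace formula,
\[
\tr\bigl([R]_I^{-1}[A]_I\bigr)-2c_+ \;\ge\; \SL_{j\ne i}\CT(\ol{I}_j)+\min\bigl(\CP(\ol{I}_i),\,0\bigr).
\]
This right-hand side is nonnegative by Assumption~(A): if $\CP(\ol{I}_i)\ge 0$ it reduces to $\SL_{j\ne i}\CT(\ol{I}_j)\ge 0$ (each $\CT$-value is nonnegative by~\eqref{CT}), while if $\CP(\ol{I}_i)<0$ it equals $\SL_{j\ne i}\CT(\ol{I}_j)+\CP(\ol{I}_i)\ge 0$ by~\eqref{conditionA} applied to this $i$. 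The hypothesis of Theorem~\ref{cornerthm}(i) is verified, completing the proof of Lemma~\ref{63}.
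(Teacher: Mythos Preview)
Your proof is correct and follows essentially the same route as the paper: compute the trace blockwise, bound $c_+$ using the block structure, and then invoke the computation from the proof of Theorem~\ref{totalcor} within each block together with Assumption~(A). The only cosmetic difference is that you first establish the clean identity $c_+ = \max_j c_+^{(j)}$ and then apply Lemma~\ref{simple} inside each block, whereas the paper's Lemma~\ref{651} merges these two steps into a single bound $c_+ \le \max_j \ol{c}_+^{(j)}$ via Lemma~\ref{fraccomp}; the substance is identical.
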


Combining Lemmas~\ref{61}, ~\ref{62}, and~\ref{63} with Remark~\ref{rmkcrucial}, we complete the proof of Lemma~\ref{cornerstone} and Theorem~\ref{mainthm}.  \qed

\medskip
In the remainder of this subsection, we shall prove these three lemmas. 

\medskip
\noindent
{\it Proof of Lemma~\ref{61}}: Without loss of generality, suppose $I_1 = \{k\}$ and $I_2 = \{l\}$ are singletons. Since they are not adjacent, $|k - l| \ge 2$; assume that $k < l$, so that $l \ge  k+2$. Then 
$$
\left(Z_k, Z_l\right)' = \SRBM^2\left([R]_{I_1\cup I_2}, 0, [A]_{I_1\cup I_2}\right).
$$
But 
$$
[A]_{I_1\cup I_2} = \begin{bmatrix} \si_k^2 + \si_{k+1}^2 & 0 \\
0 & \si_l^2 + \si_{l+1}^2\end{bmatrix},  \ \ 
[R]_{I_1\cup I_2} = I_2.
$$
Therefore, $Z_k$ and $Z_l$ are independent reflected Brownian motions on $\BR_+$. They do not hit zero simultaneously, which is the same as to say  that  $(Z_k, Z_l)'$ does not hit the origin in $\BR^2_+$. 

\medskip
\noindent
{\it Proof of Lemma~\ref{62}}: Assume without loss of generality that $I_1 = \{1, 2\}$, and we have local concavity at $2$: $\si_2^2 \ge (\si_1^2 + \si_3^2)/2$. By Remark~\ref{rmkcrucial}, it suffices to show that an $\SRBM^2([R]_{I_1}, [\mu]_{I_1}, [A]_{I_1})$ does not hit the origin. Because of the connection between an SRBM and systems of competing Brownian particles outlined in subsection 4.2, this, in turn, is equivalent of a system of three competing Brownian particles with diffusion coefficients $\si_1^2, \si_2^2, \si_3^2$ not having a triple collision. But this last statement follows from Proposition~\ref{elegant}, applied to the case $N = 3$. 

\medskip
\noindent
{\it Proof of Lemma~\ref{63}}: By \cite[Lemma 5.6]{MyOwn2}, the matrices $[R]_{I_1}, \ldots, [R]_{I_r}$ are themselves reflection nonsingular $\CM$-matrices. Therefore, they are invertible, and 
$$
[R]^{-1} = \diag\left([R]^{-1}_{I_1}, \ldots, [R]^{-1}_{I_r}\right).
$$
In addition,
\begin{equation}
\label{4568}
[R]_I^{-1}[A]_I^{-1} = \diag\left([R]^{-1}_{I_1}[A]_{I_1}, \ldots, [R]^{-1}_{I_r}[A]_{I_r}\right),
\end{equation}
$$
[R]_I^{-1}[A]_I^{-1}[R]_I^{-1} = \diag\left([R]^{-1}_{I_1}[A]_{I_1}[R]_{I_1}^{-1}, \ldots, [R]^{-1}_{I_r}[A]_{I_r}[R]_{I_r}^{-1}\right).
$$

\begin{lemma} For the matrices $R$ and $A$ given by~\eqref{R12} and~\eqref{A}, we have:
\begin{equation}
\label{11}
\tr\bigl([R]_I^{-1}[A]_I^{-1}\bigr) = \SL_{j=1}^r\CT(\ol{I}_j).
\end{equation}
\end{lemma}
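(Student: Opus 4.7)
The plan is to exploit the block-diagonal factorization already recorded in \eqref{4568}, which reads
$$
[R]_I^{-1}[A]_I = \diag\bigl([R]_{I_1}^{-1}[A]_{I_1},\, \ldots,\, [R]_{I_r}^{-1}[A]_{I_r}\bigr).
$$
(Here I am reading the lemma's left-hand side as $\tr\bigl([R]_I^{-1}[A]_I\bigr)$, in line with \eqref{4568}.) Since the trace of a block-diagonal matrix is the sum of the traces of the blocks,
$$
\tr\bigl([R]_I^{-1}[A]_I\bigr) = \SL_{j=1}^r \tr\bigl([R]_{I_j}^{-1}[A]_{I_j}\bigr),
$$
so it suffices to evaluate each block separately.

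The central observation is that for a discrete interval $I_j = \{k, k+1, \ldots, l\}$, the principal submatrices $[R]_{I_j}$ and $[A]_{I_j}$ are of exactly the same form as \eqref{R12} and \eqref{A}, but for a reduced system of $|\ol{I}_j| = l-k+2$ particles with diffusion coefficients $(\si_p)_{p\in\ol{I}_j} = (\si_k, \si_{k+1}, \ldots, \si_{l+1})$. This uses the fact that $I_j$ consists of consecutive integers, so that restricting a tridiagonal matrix to rows and columns indexed by $I_j$ preserves the tridiagonal pattern, and that the relevant entries
$a_{ii} = \si_i^2 + \si_{i+1}^2$, $a_{i,i+1} = -\si_{i+1}^2$ involve only $\si_p$ with $p \in \ol{I}_j$ whenever both $i$ and $i+1$ lie in $I_j$. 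The identification for $[R]_{I_j}$ is immediate since $R$ is built from constants.

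Having made this identification, I would apply Lemma~\ref{trace} to the reduced system inside the $j$th block. That lemma yields
$$
\tr\bigl([R]_{I_j}^{-1}[A]_{I_j}\bigr) = \frac{2(|\ol{I}_j|-1)}{|\ol{I}_j|}\SL_{p \in \ol{I}_j}\si_p^2 = \CT(\ol{I}_j),
$$
using the definition \eqref{CT} of $\CT$. Summing over $j = 1, \ldots, r$ gives the desired identity. No step presents a genuine obstacle; the only point that needs care is checking that $[A]_{I_j}$ really reproduces the $A$-pattern for a smaller system, which fails in general if $I_j$ is not a discrete interval (skipping an index would merge two disjoint tridiagonal blocks into something that is not of the form \eqref{A} at all), but holds here precisely because we have decomposed $I$ into discrete intervals.
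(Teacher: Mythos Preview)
Your proposal is correct and follows essentially the same route as the paper: use the block-diagonal decomposition \eqref{4568} to split the trace into a sum over the blocks, then apply Lemma~\ref{trace} to each block $[R]_{I_j}^{-1}[A]_{I_j}$ after identifying it with the $R^{-1}A$ of a smaller system indexed by $\ol{I}_j$. Your parenthetical reading of the left-hand side as $\tr\bigl([R]_I^{-1}[A]_I\bigr)$ rather than with an extra inverse on $[A]_I$ is also the intended one, consistent with both \eqref{4568} and Lemma~\ref{trace}.
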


\begin{proof}
Because of~\eqref{4568}, we get:
\begin{equation}
\label{21}
\tr\bigl([R]_I^{-1}[A]_I^{-1}\bigr) = \SL_{j=1}^r\tr\bigl([R]^{-1}_{I_j}[A]_{I_j}\bigr).
\end{equation}
Applying Lemma~\ref{trace} with $I_j$ instead of $\{1, \ldots, N-1\}$ and $\ol{I}_j$ instead of $\{1, \ldots, N\}$, $j = 1, \ldots, r$, we have:
\begin{equation}
\label{22}
\tr\bigl([R]^{-1}_{I_j}[A]_{I_j}\bigr) = \SL_{j=1}^r\CT(\ol{I}_j),\ \ j = 1, \ldots, r.
\end{equation}
Combining~\eqref{21} and~\eqref{22}, we get~\eqref{11}. 
\end{proof}

\begin{lemma}
We have the following estimate:
\begin{equation}
\label{100}
\max\limits_{x \in \BR^{|I|}_+\setminus\{0\}}\frac{x'[R]_I^{-1}[A]_I[R]_I^{-1}x}{x'[R]^{-1}_Ix} \le  \max\limits_{j = 1, \ldots, r}\max\limits_{\substack{k, l \in I_j\\ k \le l}}\frac{\bigl([R]_{I_j}^{-1}[A]_{I_j}[R]_{I_j}^{-1}\bigr)_{kl}}{\bigl([R]_{I_j}^{-1}\bigr)_{kl}}.
\end{equation}
\label{651}
\end{lemma}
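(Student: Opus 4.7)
The plan is to exploit the block-diagonal structure of the restricted matrices. Since the discrete intervals $I_1,\ldots,I_r$ are pairwise non-adjacent and both $R$ and $A$ are tridiagonal, the restricted matrices $[R]_I$ and $[A]_I$ decompose as block-diagonal matrices with blocks $[R]_{I_j}$ and $[A]_{I_j}$. Consequently $[R]_I^{-1}$ and $[R]_I^{-1}[A]_I[R]_I^{-1}$ are also block-diagonal. Writing $x_{I_j}:=[x]_{I_j}$ for the components of $x\in\BR^{|I|}_+$ indexed by $I_j$, the quadratic forms split as
$$
x'[R]_I^{-1}x = \SL_{j=1}^r x_{I_j}'[R]_{I_j}^{-1}x_{I_j},\qquad x'[R]_I^{-1}[A]_I[R]_I^{-1}x = \SL_{j=1}^r x_{I_j}'[R]_{I_j}^{-1}[A]_{I_j}[R]_{I_j}^{-1}x_{I_j}.
$$

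Next, I would invoke the elementary inequality: for nonnegative reals $a_j$ and positive reals $b_j$ (not all zero), $\frac{\sum_j a_j}{\sum_j b_j}\le \max_j \frac{a_j}{b_j}$. To apply it here, one needs $x_{I_j}'[R]_{I_j}^{-1}x_{I_j}>0$ whenever $x_{I_j}\neq 0$, which is true because each block $[R]_{I_j}$ is a symmetric matrix of precisely the form~\eqref{R12} (of dimension $|I_j|$), so by~\eqref{invR} every entry of $[R]_{I_j}^{-1}$ is strictly positive, giving a strictly positive value on any nonzero $x_{I_j}\ge 0$. Blocks with $x_{I_j}=0$ contribute zero to both sums and may be discarded. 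Similarly, each numerator block is nonnegative because $[R]_{I_j}^{-1}[A]_{I_j}[R]_{I_j}^{-1}$ is symmetric positive semidefinite.

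Finally, I would apply Lemma~\ref{simple} to each block separately. Since $[R]_{I_j}$ is symmetric, Assumption (C) holds trivially with the identity scaling $C=I_{|I_j|}$, and we just verified that all entries of $[R]_{I_j}^{-1}$ are strictly positive. Hence
$$
\frac{x_{I_j}'[R]_{I_j}^{-1}[A]_{I_j}[R]_{I_j}^{-1}x_{I_j}}{x_{I_j}'[R]_{I_j}^{-1}x_{I_j}} \le \max\limits_{\substack{k, l \in I_j\\ k \le l}}\frac{\bigl([R]_{I_j}^{-1}[A]_{I_j}[R]_{I_j}^{-1}\bigr)_{kl}}{\bigl([R]_{I_j}^{-1}\bigr)_{kl}}
$$
for every nonzero $x_{I_j}\in\BR^{|I_j|}_+$. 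Chaining this with the ratio-of-sums inequality yields~\eqref{100}.

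There is no real obstacle: the argument is essentially bookkeeping once the block structure is recognized. The only point requiring a moment's care is checking strict positivity of the entries of each $[R]_{I_j}^{-1}$ (so that the denominators in the final max are nonzero and so that the splitting inequality applies), which is handled by observing that $I_j$ is a \emph{discrete interval} and thus $[R]_{I_j}$ is itself a tridiagonal matrix of the form~\eqref{R12} to which the formula~\eqref{invR} applies verbatim.
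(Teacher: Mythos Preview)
Your proposal is correct and follows essentially the same route as the paper's proof: split the quadratic forms by the block-diagonal structure, discard the zero blocks (the paper calls the surviving index set $\mathcal Q(x)$), apply the ratio-of-sums inequality (Lemma~\ref{fraccomp}) using that each $[R]_{I_j}^{-1}$ has strictly positive entries, and then apply Lemma~\ref{simple} blockwise. One minor remark: the nonnegativity of the numerator blocks that you mention is not needed, since Lemma~\ref{fraccomp} only requires the denominators $b_j$ to be positive.
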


The proof of Lemma~\ref{651} is postponed until the end of this section. Assuming we have proved it, let us show how to finish the proof of Lemma~\ref{63}. 

Using~\eqref{100} and~\eqref{11}, we can rewrite the condition~\eqref{Nedge} as
$$
\SL_{j=1}^r\CT(\ol{I}_j) - 2\!\!\max\limits_{i = 1, \ldots, r}\max\limits_{\substack{k, l \in I_i\\ k \le l}}\frac{\bigl([R]_{I_i}^{-1}[A]_{I_i}^{-1}[R]_{I_i}^{-1}\bigr)_{kl}}{\bigl([R]_{I_i}^{-1}\bigr)_{kl}} 
\ge 0.
$$
Equivalently,
$$
\SL_{\substack{j=1\\j \ne i}}^r\CT(\ol{I}_j) + \CT(\ol{I}_i) - 2\max\limits_{\substack{k, l \in I_i\\ k \le l}}\frac{\bigl([R]_{I_i}^{-1}[A]_{I_i}^{-1}[R]_{I_i}^{-1}\bigr)_{kl}}{\bigl([R]_{I_i}^{-1}\bigr)_{kl}} \ge 0,\ \ i = 1, \ldots, r.
$$
In the proof of Theorem~\ref{totalcor}, see~\eqref{10000} and~\eqref{CTcalc}, it was shown that for $i = 1, \ldots, r$, we have:
$$
\CT(\ol{I}_i) - 2\max\limits_{\substack{k, l \in I_i\\ k \le l}}\frac{\bigl([R]_{I_i}^{-1}[A]_{I_i}^{-1}[R]_{I_i}^{-1}\bigr)_{kl}}{\bigl([R]_{I_i}^{-1}\bigr)_{kl}} = 
\min(\CP(\ol{I}_i), \de_i),\ \ \de_i := \de([\si]_{\ol{I}_i}) \ge 0.
$$ 
Therefore, the condition~\eqref{Nedge} is equivalent to 
\begin{equation}
\label{newform}
\SL_{j\ne i}\CT(\ol{I}_j) + \min(\CP(\ol{I}_i), \de_i) \ge 0,\ \ i = 1, \ldots, r.
\end{equation}
The condition~\eqref{newform}, in turn, is equivalent to
$$
\SL_{j\ne i}\CT(\ol{I}_j) + \CP(\ol{I}_i) \ge 0,\ \ i = 1, \ldots, r.
$$
This completes the proof of Lemma~\ref{63}, and with it the proofs of Lemma~\ref{cornerstone} and Theorem~\ref{mainthm}. 

\medskip

{\it Proof of Lemma~\ref{651}.} The matrices $[R]_I^{-1}$ and $[A]_I^{-1}$ are block-diagonal, with the blocks corresponding to the sets $I_1, \ldots, I_r$ of indices. Therefore, 
\begin{equation}
\label{42}
x'[R]_I^{-1}[A]_I[R]_I^{-1}x = \SL_{j=1}^r[x]'_{I_j}[R]_{I_j}^{-1}[A]_{I_j}[R]_{I_j}^{-1}[x]_{I_j},\ \ x'[R]_I^{-1}x = \SL_{j=1}^r[x]'_{I_j}[R]_{I_j}^{-1}[x]_{I_j}.
\end{equation}
Let $\mathcal Q(x) := \{j = 1, \ldots, r\mid [x]_{I_j} \ne 0\}$. We might as well rewrite~\eqref{42} as
$$
x'[R]_I^{-1}[A]_I[R]_I^{-1}x =  \SL_{j \in \mathcal Q(x)}[x]'_{I_j}[R]_{I_j}^{-1}[A]_{I_j}[R]_{I_j}^{-1}[x]_{I_j},\ \  x'[R]_I^{-1}x = \SL_{j\in \mathcal Q(x)}[x]'_{I_j}[R]_{I_j}^{-1}[x]_{I_j}.
$$
For $j \in \mathcal Q(x)$, we have: $[x]_{I_j} \in S^{|I_j|}_+\setminus\{0\}$. The matrix $[R]_{I_j}$ has the same form as $R$ in~\eqref{R12}, but with smaller size. Therefore, all elements of the inverse matrix $[R]_{I_i}^{-1}$ (just like for $R^{-1}$) are positive. Therefore, 
$[x]'_{I_i}[R]_{I_i}^{-1}[x]_{I_i} > 0$, $i = 1, \ldots, r$. Applying Lemma~\ref{fraccomp} to $a_i = [x]'_{I_i}[R]_{I_i}^{-1}[A]_{I_i}[R]_{I_i}^{-1}[x]_{I_i}$ and $b_i = [x]'_{I_i}[R]_{I_i}^{-1}[x]_{I_i} > 0$ for $i \in \mathcal Q(x)$, we get:
\begin{equation}
\label{10001}
\frac{x'[R]_I^{-1}[A]_I[R]_I^{-1}x}{x'[R]_I^{-1}x} \le 
\max\limits_{j \in \mathcal Q(x)}\frac{[x]'_{I_j}[R]_{I_j}^{-1}[A]_{I_j}[R]_{I_j}^{-1}[x]_{I_j}}{[x]'_{I_j}[R]_{I_j}^{-1}[x]_{I_j}}.
\end{equation}
But the matrix $[R]_{I_j}$, as just mentioned, has all elements positive. Applying Lemma~\ref{simple}, we have for $y \in \BR^{|I_j|}_+\setminus\{0\}$:
\begin{equation}
\label{10002}
\frac{y'[R]_{I_j}^{-1}[A]_{I_j}[R]_{I_j}^{-1}y}{y'[R]_{I_j}^{-1}y} \le 
\max\limits_{\substack{k, l \in I_j\\ k \le l}}\frac{\left([R]_{I_j}^{-1}[A]_{I_j}[R]_{I_j}^{-1}\right)_{kl}}{\left([R]_{I_j}^{-1}\right)_{kl}}.
\end{equation}
Combining~\eqref{10001} and~\eqref{10002}, we get~\eqref{100}. 
$\square$

\subsection{Proof of Theorem~\ref{cams}}

Recall the setting of Theorem~\ref{cornerthm}: we have a process $Z = (Z(t), t \ge 0)$ in $\BR^d_+$, which is an $\SRBM^d(R, \mu, A)$ with a reflection nonsingular $\CM$-matrix $R$. We would like this process  to avoid the corner $\{0\}$. We have: $d = N - 1 = 3$, and 
$$
R = 
\begin{bmatrix}
1 & -1/2 & 0\\
-1/2 & 1 & -1/2\\
0 & -1/2 & 1
\end{bmatrix},\ \ 
A = 
\begin{bmatrix}
\si_1^2 + \si_2^2 & -\si_2^2 & 0\\
-\si_2^2 & \si_2^2 + \si_3^2 & -\si_3^2\\
0 & -\si_3^2 & \si_3^2 + \si_4^2 
\end{bmatrix}
$$
We pick the following matrix:
\begin{equation}
\label{QDef}
Q = 
\begin{bmatrix}
1 & 1 & 1 \\
1 & \lambda & 1 \\
1 & 1 & 1 \\
\end{bmatrix},
\ \ \mbox{where}\ \ \lambda =\frac{\si_1^2+\si_2^2+\si_3^2+\si_4^2}{\si_2^2+\si_3^2}.
\end{equation}
This is a symmetric matrix. It is also strictly copositive, because all its elements are strictly positive. Let us show that conditions of Theorem~\ref{cornerthm} (i) hold. First, calculations show that 
\begin{equation}
QR=
\begin{bmatrix}
\frac{1}{2} & 0 & \frac{1}{2} \\
1-\frac{\lambda}{2} & \lambda-1 & 1-\frac{\lambda}{2} \\
\frac{1}{2} & 0 & \frac{1}{2}
\end{bmatrix},
\end{equation}
Therefore, $(QR)_{ij} \ge 0$ for $i \ne j$ is equivalent to 
$$
1-\frac{\lambda}{2}\ge 0 \iff \lambda \le 2 \iff \si_2^2+\si_3^2 \ge \si_1^2 +\si_4^2.
$$
By a simple calculation, one can also confirm the relation 
$$
QAQ=\frac{\tr(QA)}{2}Q,
$$
and $\tr\left(QA\right) \ge 2c_+(Q)$. This completes the proof of Theorem~\ref{cams}.

\section{Competing Brownian Particles with Asymmetric Collisions}

One can generalize the classical system of competing Brownian particles from Definition~\ref{classicdef} in many ways. Let us describe one of these generalizations.
For $k = 1, \ldots, N-1$, let
$$
L_{(k, k+1)} = (L_{(k, k+1)}(t), t \ge 0)
$$
be the semimartingale local time process at zero of the process $Z_k = Y_{k+1} - Y_k$. We shall call this the {\it collision local time} of the particles $Y_k$ and $Y_{k+1}$. For notational convenience, let $L_{(0, 1)}(t) \equiv 0$ and $L_{(N, N+1)}(t) \equiv 0$. 
Let
$$
B_k(t) = \SL_{i=1}^N\int_0^t1(\mP_s(k) = i)\md W_i(s),\ \ k = 1, \ldots, N,\ \ t \ge 0.
$$
It can be checked that $\langle B_k, B_l\rangle_t \equiv \de_{kl}t$; that is, $B_1, \ldots, B_N$ are i.i.d. standard Brownian motions. 
As shown in \cite{BFK2005, BG2008, Ichiba11}, \cite[Chapter 3]{IchibaThesis}, the ranked particles $Y_1, \ldots, Y_N$ have the following dynamics:
$$
Y_k(t) = Y_k(0) + g_kt + \si_kB_k(t) - \frac12 L_{(k, k+1)}(t) + \frac12 L_{(k-1, k)}(t),\ \ k = 1, \ldots, N.
$$
The collision local time $L_{(k, k+1)}$ has a physical meaning of the push exerted when the particles $Y_k$ and $Y_{k+1}$ collide, which is needed to keep the particle $Y_{k+1}$ above the particle $Y_k$. Note that the coefficients at the local time terms are $\pm 1/2$. This means that the collision local time $L_{(k, k+1)}$ is split evenly between the two colliding particles: the lower-ranked particle $Y_k$ receives one-half of this local time, which pushes it down, and the higher-ranked particle $Y_{k+1}$ receives the other one-half of this local time, which pushes it up. 

In the paper \cite{KPS2012}, they considered systems of Brownian particles when this collision local time is split unevenly: the part $q^+_{k+1}L_{(k, k+1)}(t)$ goes to the upper particle $Y_{k+1}$, and the part $q^-_kL_{(k, k+1)}(t)$ goes to the lower particle $Y_k$. Let us give a formal definition.

\begin{defn} Fix $N \ge 2$, the number of particles. Take drift and diffusion coefficients
$$
g_1, \ldots, g_N;\ \ \si_1, \ldots, \si_N > 0,
$$
and, in addition, take {\it parameters of collision}
$$
q^{\pm}_1,\ldots, q^{\pm}_N \in (0, 1),\ \ q^+_{k+1} + q^-_k = 1,\ \ k = 1, \ldots, N-1.
$$
Consider a continuous adapted $\BR^N$-valued process
$$
Y = \left(Y(t) = (Y_1(t), \ldots, Y_N(t))', t \ge 0\right).
$$
Take other $N-1$ continuous adapted real-valued nondecreasing processes
$$
L_{(k, k+1)} = (L_{(k, k+1)}(t), t \ge 0),\ \ k = 1, \ldots, N-1,
$$
with $L_{(k, k+1)}(0) = 0$, which can increase only when $Y_{k+1} = Y_k$:
$$
\int_0^{\infty}1(Y_{k+1}(t) > Y_k(t))\md L_{(k, k+1)}(t) = 0,\ \ k = 1, \ldots, N-1.
$$
Let $L_{(0, 1)}(t) \equiv 0$ and $L_{(N, N+1)}(t) \equiv 0$. Assume that
\begin{equation}
\label{asymm}
Y_k(t) = Y_k(0) + g_kt + \si_kB_k(t) - q^-_kL_{(k, k+1)}(t) + q^+_kL_{(k-1, k)}(t),\ \ k = 1, \ldots, N.
\end{equation}
Then the process $Y$ is called the {\it system of competing Brownian particles with asymmetric collisions}. The gap process is defined similarly to the case of a classical system. 
\label{defn:asymm}
\end{defn}

Strong existence and pathwise uniqueness for these systems were shown in \cite[Section 2.1]{KPS2012}. When $q^{\pm}_1 = q^{\pm}_2 = \ldots = 1/2$, we are back in the case of symmetric collisions. 

\begin{rmk}
For systems of competing Brownian particles with asymmetric collisions,
we defined only ranked particles $Y_1, \ldots, Y_N$. It is, however, possible to define named particles $X_1, \ldots, X_N$ for the case of asymmetric collisions. This is done in \cite[Section 2.4]{KPS2012}. The construction works up to the first moment of a triple collision. A necessary and sufficient condition for a.s. absence of triple collisions is given in \cite{MyOwn3}. We will not make use of this construction in our article, instead working with ranked particles. 
\end{rmk}

We can define collisions and multicollisions similarly to the classical case, as in Definition~\ref{Pat}. It was shown in \cite{KPS2012} that the gap process for systems with asymmetric collisions, much like for the classical case, is an SRBM. Namely, it is an $\SRBM^{N-1}(R, \mu, A)$, where $\mu$ and $A$ are given by~\eqref{mu} and~\eqref{A}, and the reflection matrix $R$ is given by 
\begin{equation}
\label{R}
R = 
\begin{bmatrix}
1 & -q^-_2 & 0 & 0 & \ldots & 0 & 0\\
-q^+_2 & 1 & -q^-_3 & 0 & \ldots & 0 & 0\\
0 & -q^+_3 & 1 & -q^-_4 & \ldots & 0 & 0\\
\vdots & \vdots & \vdots & \vdots & \ddots & \vdots & \vdots\\
0 & 0 & 0 & 0 & \ldots & 1 & -q^-_{N-1}\\
0 & 0 & 0 & 0 & \ldots & -q^+_{N-1} & 1
\end{bmatrix}
\end{equation}
The connection between multicollisions and multiple collisions in this system and hitting of edges of $\BR^{N-1}_+$ by the gap process is the same as in Lemma~\ref{red}. This allows us to apply Theorem~\eqref{cornerthm} and Theorem~\eqref{corner2edge} to find sufficient conditions for avoiding multicollisions of a given pattern. In particular, the results of Lemma~\ref{Indep} are still valid for system with asymmetric collisions: the property of a.s. avoiding multicollisions of a certain pattern depends only on the diffusion coefficients and parameters of collision. 

A remark is in order: the matrix $R$ in~\eqref{R} in general is not symmetric, as opposed to the matrix $R$ in~\eqref{R12}. But if we take the $(N-1)\times(N-1)$ diagonal matrix
$$
C = \diag\left(1, \frac{q^+_2}{q^-_2}, \frac{q^+_2q^+_3}{q^-_2q^-_3}, \ldots, \frac{q^+_2q^+_3\ldots q^+_{N-1}}{q^-_2q^-_3\ldots q^-_{N-1}}\right),
$$
then the matrix $\ol{R} = RC$ is diagonal.

%
%
%
%

\section{Appendix}

\subsection{Proof of Lemma~\ref{Indep}} Follows from \cite[Lemma 3.1]{MyOwn3}, the discussion in \cite[Subsection 3.2]{MyOwn3}, and the reduction of multicollisions to hitting edges of the orthant which is done in Lemma~\ref{red} in this article. 

\subsection{Proof of Lemma~\ref{constants}}

Fix $x \in S\setminus\{0\}$. Since $Q$ is strictly copositive, we have: $x'Qx > 0$. Since $Q$ is nonsingular, $Qx \ne 0$. Since $A$ is positive definite, we have: $x'QAQx = (Qx)'A(Qx) > 0$. Therefore, the function
$$
f(x) := \frac{x'QAQx}{x'Qx}
$$
is well-defined and strictly positive on $S\setminus\{0\}$. In addition, it is {\it homogeneous}, in the sense that for $x \in S\setminus\{0\}$ and $k > 0$ we have: $f(kx) = f(x)$. Therefore, 
$$
\{f(x)\mid x \in S\setminus\{0\}\} = \{f(x)\mid x \in S,\ \norm{x} = 1\}.
$$
The set $\{x \in S\mid \norm{x} = 1\}$ is compact, and $f$ is continuous and positive on this set. Therefore, it is bounded on this set (and therefore on the set $S\setminus\{0\}$), and reaches its maximal and minimal values, both of which are strictly positive. 


\subsection{Proof of Corollary~\ref{cor:inv-R}} By \cite[Lemma 2.1]{MyOwn3} (equivalent characterization of reflection nonsingular $\CM$-matrices), we have: 
$$
\left(R^{-1}\right)_{ij} \ge 0,\ \ i, j = 1, \ldots, d; \ \ \left(R^{-1}\right)_{ii} > 0,\ \ i = 1, \ldots, d.
$$
Therefore, the matrix $\om = C^{-1}R^{-1} = (\rho_{ij})_{1 \le i, j \le d}$
has elements $\rho_{ij} = c_i^{-1}\left(R^{-1}\right)_{ij}$. By assumptions, the matrix $\om$ is symmetric. Therefore, its entries satisfy
\begin{equation}
\label{properties}
\rho_{ij}  = \rho_{ji} \ge 0,\ \ i, j = 1, \ldots, d; \ \ \rho_{ii} > 0,\ \ i = 1, \ldots, d.
\end{equation}
From here, it is easy to see that $\om$ is strictly copositive: $x'\om x > 0$ for $x \in S\setminus\{0\}$. Also, $(\om R)_{ij} = (C^{-1})_{ij} = 0$ for $i \ne j$. It suffices to apply Theorem~\ref{cornerthm}. 

\subsection{Proof of Lemma~\ref{simple}}

Let us prove the statement for the maximum. For the minimum, the proof is similar. For $x \in S\setminus\{0\}$, we have: $x_1, \ldots, x_d \ge 0$, and 
$$
\frac{x'\om A\om x}{x'\om x} = \frac{\sum_{i=1}^d\sum_{j=1}^d(\om A\om)_{ij}x_ix_j}{\sum_{i=1}^d\sum_{j=1}^d\rho_{ij}x_ix_j}.
$$
Apply Lemma~\ref{fraccomp} to $s = d^2$, $a_{ij} = (\om A\om)_{ij}x_ix_j$, $b_{ij} = \rho_{ij}x_ix_j$ (we index $a_i$ and $b_i$ by double indices, with each of the two indices ranging from $1$ to $d$). It suffices to note that, because of the symmetry of $\om A\om$ and $\om = (\rho_{ij})$, we have:
$$
\max\limits_{i, j = 1, \ldots, d}\frac{(\om A\om)_{ij}}{\rho_{ij}} = \max\limits_{1 \le i \le j \le d}\frac{(\om A\om)_{ij}}{\rho_{ij}}.
$$

\subsection{Miscellaneous lemmata}

\begin{lemma}
Take real numbers $a_1, \ldots, a_s$ and positive real numbers $b_1, \ldots, b_s$. Then 
$$
\min\left(\frac{a_1}{b_1}, \ldots, \frac{a_s}{b_s}\right) \le \frac{a_1 + \ldots + a_s}{b_1 + \ldots + b_s} \le \max\left(\frac{a_1}{b_1}, \ldots, \frac{a_s}{b_s}\right).
$$
\label{fraccomp}
\end{lemma}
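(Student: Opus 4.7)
The plan is to reduce the chain of inequalities to the classical mediant observation: every individual ratio $a_i/b_i$ lies between the minimum and the maximum of all such ratios, and multiplying by the positive weights $b_i$ preserves these bounds. This is essentially the weighted-average inequality, and no machinery beyond the positivity of each $b_i$ is required.

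First I would set $m := \min_{1 \le i \le s} a_i/b_i$ and $M := \max_{1 \le i \le s} a_i/b_i$. By definition, for every $i$ we have $m \le a_i/b_i \le M$. Since $b_i > 0$, multiplying through by $b_i$ preserves the inequalities and gives $m\, b_i \le a_i \le M\, b_i$. Summing over $i = 1, \ldots, s$ then yields
$$
m\sum_{i=1}^s b_i \;\le\; \sum_{i=1}^s a_i \;\le\; M\sum_{i=1}^s b_i.
$$
Finally, since each $b_i > 0$, the sum $\sum_{i=1}^s b_i$ is strictly positive, so dividing through by it preserves the inequalities and produces exactly the claimed bounds.

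There is essentially no obstacle to overcome here; the only subtle point worth flagging is that one must use the positivity of each individual $b_i$ when passing from $m \le a_i/b_i$ to $m\,b_i \le a_i$ (merely knowing $\sum_i b_i > 0$ would not suffice for that step). Equality on the left (respectively right) holds iff all ratios $a_i/b_i$ for which the corresponding $b_i$ contributes are equal to $m$ (respectively $M$), but this refinement is not needed for the applications in the paper.
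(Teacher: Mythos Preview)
Your proof is correct and uses essentially the same idea as the paper: exploit the positivity of each $b_i$ to multiply the pointwise bounds by $b_i$ and then sum. The only cosmetic difference is that the paper argues by contradiction (assuming the mediant strictly exceeds every $a_i/b_i$ and summing to a contradiction), whereas you give the cleaner direct argument by first fixing $m$ and $M$; your version is at least as clear.
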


\begin{proof}
Let us prove the inequality
$$
\frac{a_1 + \ldots + a_s}{b_1 + \ldots + b_s} \le \max\left(\frac{a_1}{b_1}, \ldots, \frac{a_s}{b_s}\right).
$$
The other inequality is proved similarly. Assume the converse: that 
$$
\frac{a_1 + \ldots + a_s}{b_1 + \ldots + b_s} > \frac{a_i}{b_i},\ \ i = 1, \ldots, s.
$$
Multiply the $i$th inequality by $(b_1 + \ldots + b_s)b_i > 0$:
$$
(a_1 + \ldots + a_s)b_i > a_i(b_1 + \ldots + b_s),\ \ i = 1, \ldots, s.
$$
Add them up and get:
$$
(a_1 + \ldots + a_s)(b_1 + \ldots + b_s) > (a_1 + \ldots + a_s)(b_1 + \ldots + b_s),
$$
and we arrive at a contradiction. 
\end{proof}

\begin{lemma} 
\label{lemma:aux-comparison}
Suppose we are given the following:

(i) two real-valued continuous adapted processes 
$$
X_1 = (X_1(t), t \ge 0)\ \ \mbox{and}\ \ X_2 = (X_2(t), t \ge 0),
$$
starting from the same $X_1(0) = X_2(0) = x$;

(ii) a real continuous function $\si : \BR \to \BR$ such that 
$$
|\si(x) - \si(y)| \le \rho(|x-y|),\ \ x, y \in \BR,\ t \ge 0,
$$
where $\rho : \BR_+ \to \BR_+$ is an increasing function such that $\rho(0) = 0$ and
$\int_0^{\infty}\rho^{-2}(s)\md s = \infty$;

(iii) a continuous function $b : \BR \to \BR$ and a continuous adapted process $\be = (\be(t), t \ge 0)$ with bounded variation, such that for every subset $A \subseteq \BR_+$, we have:
\begin{equation}
\label{eq:comp-measures}
\int_A\md\be(t) \ge \int_Ab(X_1(t))\md t,
\end{equation}
and the following equations are satisfied:
$$
\md X_1(t) = b(X_1(t))\md t + \si(X_1(t))\md W(t), \ \ \md X_2(t) = \md\be(t) + \si(X_2(t))\md W(t).
$$
Here, $W = (W(t), t \ge 0)$ is a standard Brownian motion. Then a.s. for all $t \ge 0$ we have: $X_1(t) \le X_2(t)$.  
\end{lemma}

\begin{proof} This is a modification of the proof of \cite[Theorem 6.1]{IWBook} and \cite[Theorem 1.1]{IkedaComp}. From the property~\eqref{eq:comp-measures}, we get: for any measurable function $\phi : \BR_+ \to \BR_+$ and any $t > 0$, we get:
$$
\int_0^t\phi(s)\md\be(s) \ge \int_0^t\phi(s)b(X_2(s))\md s.
$$
In the proof \cite[Theorem 1.1]{IkedaComp}, we should change $\be_2(s)\md s$ to 
$\md\be(s)$ and $\be_1(s)\md s$ to $b(X_1(s))\md s$. The rest of the proof should be modified accordingly. 
\end{proof}

\section*{Acknoweldgements}

The authors would like to thank \textsc{Ioannis Karatzas}, \textsc{Soumik Pal} and \textsc{Ruth Williams}, as well as an anonymous referee, for help and useful discussion. This research was partially supported by NSF grants DMS 1007563, DMS 1308340, DMS 1409434,  and DMS 1405210.


\bibliographystyle{plain}

\bibliography{aggregated}

\end{document}